\documentclass[11pt,a4paper]{article}
\usepackage{amsmath,amssymb,amsthm,amsfonts}
\ifx\pdfoutput\undefined
  \usepackage[dvipdfmx]{graphicx}
\else
  \usepackage{graphicx}
\fi
\graphicspath{{./}}
\DeclareGraphicsExtensions{.pdf,.png,.jpg,.jpeg}
\usepackage{enumerate}
\usepackage{url}
\usepackage[margin=1in]{geometry}

\newtheorem{theorem}{Theorem}[section]
\newtheorem{proposition}[theorem]{Proposition}
\newtheorem{lemma}[theorem]{Lemma}
\newtheorem{corollary}[theorem]{Corollary}
\theoremstyle{definition}
\newtheorem{definition}[theorem]{Definition}
\newtheorem{assumption}[theorem]{Assumption}
\newtheorem{example}[theorem]{Example}
\newtheorem{remark}[theorem]{Remark}

\newcommand{\R}{\mathbb{R}}
\newcommand{\N}{\mathbb{N}}

\newcommand{\E}{\mathrm{E}}
\newcommand{\PP}{\mathrm{P}}
\newcommand{\Pt}{\widetilde{\PP}}
\newcommand{\Et}{\widetilde{\E}}
\newcommand{\F}{\mathcal{F}}
\newcommand{\T}{\mathcal{T}}
\newcommand{\A}{\mathcal{A}}
\newcommand{\B}{\mathcal{B}}
\newcommand{\Mcl}{\mathcal{M}}
\newcommand{\esup}{\operatorname*{ess\,sup}}
\newcommand{\med}{\operatorname{med}}
\newcommand{\dd}{\mathrm{d}}
\title{Multiple Stopping Options on a Geometric Random Walk}
\author{Katsunori Ano\thanks{Department of Human-centered Data Science, Bunkyo Gakuin University.
\texttt{k-ano@bgu.ac.jp}}}
\date{September 25, 2026}
\begin{document}
\maketitle
\begin{abstract}
This article develops a finite-horizon multiple-stopping framework and applies it to
	three American-style contracts on a geometric random walk in a
	Cox--Ross--Rubinstein market: an American put, a Russian option, and a
	floating-strike geometric-average Asian put.
The general problem is represented by recursively defined Snell envelopes, with
	unused exercise rights encoded by a cemetery time; this yields an ordered optimal
	exercise vector without requiring all rights to be exercised.
For the American put, a median representation of successive marginal values yields
	diminishing marginal values, nested exercise regions, and monotone exercise
	thresholds without relying on convexity of the marginal value.
After suitable state reductions, analogous marginal-value arguments give
	threshold-type optimal exercise rules for the Russian and geometric-average Asian
	options.  
Independent random maturity is also incorporated, and its effect on the
	corresponding stopping regions is identified.
\end{abstract}

\medskip
\noindent
\textbf{Keywords:} optimal multiple stopping; marginal value; American put option;
	Russian option; Asian option; Snell envelope;
	random maturity; geometric random walk; exercise boundary.

\medskip
\noindent
\textbf{MSC 2020:} 60G40, 91G20, 62L15.
%%%%%%%%%%%%%%%%%%%%%%%%%%%%%%%%%%%%%%%%%%%%%%%
\tableofcontents
%%%%%%%%%%%%%%%%%%%%%%%%%%%%%%%%%%%%%%%%%%%%%%%
\section{Introduction}
%%%%%%%%%%%%%%%%%%%%%%%%%%%%%%%%%%%%%%%%%%%%%%%
American-style contracts with multiple exercise rights naturally give rise to
	finite-horizon multiple stopping problems.
In the discrete-time setting considered here, the option holder has $m$ exercise
	rights, at most one right may be exercised at each date, and unused rights need
	not be exercised.
Our aim is to identify a common structural mechanism for deriving optimal
	multiple-exercise strategies across several option models driven by a geometric
	random walk.
The three applications are an American put, a Russian option, and a
	floating-strike geometric-average Asian put; in each case, we also consider an
	independent random maturity.

We maintain the convention that at most one right may be exercised at each date.
For an additive reward process $X_\nu$, the finite-horizon problem with $m$
	rights can be written schematically as
	\begin{equation}\label{eq:intro2}
 		V^{[m]}_0=\sup_{0\le \ell\le m}\ \sup_{0\le\sigma_1<\cdots<\sigma_\ell\le N}
   			\E\!\left[\sum_{j=1}^{\ell}X_{\sigma_j}\right].
	\end{equation}
Section~\ref{sec:general} recalls the recursive Snell-envelope construction,
	which reduces \eqref{eq:intro2} to a sequence of single-stopping problems.
Two points are important for a fully usable finite-horizon formulation.
First, unused rights must be represented without forcing exercise; we handle
	this by introducing a cemetery time with zero payoff.
Second, the optimal stopping times must be defined recursively so that the
	resulting exercise vector is ordered.
Theorem~\ref{thm:multiple} establishes the exact value representation and the
	optimality of this recursively constructed vector.
It serves as the basic verification result throughout the paper.

The paper is related to several strands of the multiple stopping literature.
Early discrete-time work on sequential multiple stopping includes
	Haggstrom~\cite{haggstrom} and Nikolaev~\cite{nikolaev}.
Carmona and Touzi~\cite{carmona} developed a Snell-envelope formulation
	for swing options, proved existence of multiple-exercise policies in a general setting,
	and gave a constructive solution in the perpetual Black--Scholes case together with
	a finite-horizon approximation procedure.
Carmona and Dayanik~\cite{carmonadayanik} studied multiple stopping for regular linear diffusions.
Kobylanski, Quenez, and Rouy-Mironescu~\cite{kobylanski} developed a general theory of
	multiple stopping.
A recent systematic treatment of discrete-time multiple stopping, including unilateral
	and multilateral formulations, is given by Sofronov and Szajowski~\cite{sofronovszajowski}.
For finite horizon swing puts with a positive refraction time,
	De Angelis and Kitapbayev~\cite{deangeliskitapbayev} characterized
	the continuous-time exercise regions in terms of free boundaries,
	in a setting in which all exercise rights must be exercised by maturity.
Ano~\cite{ano} derived an optimal sequence of stopping times for
	American put, Russian, and Asian options with multiple exercise rights 
	with a positive refraction time in the Black--Scholes model.
Numerical approaches include Meinshausen and Hambly~\cite{meinshausen} 
	and Bender and Schoenmakers~\cite{bender}.  
Preliminary discrete-time analyses of the double- and multiple-exercise American put appear in
	Oishi, Usui and Ano~\cite{oishi-usui-ano} and Oishi and Ano~\cite{oishi-ano}.
In the latter work, the extension to general numbers of exercise rights was tied to
a convexity property of the marginal value that was left unproved beyond the
double-exercise case.  The present paper replaces that convexity requirement by a
median representation together with a Lipschitz/single-crossing argument.  This
gives a unified marginal-value formulation for arbitrary numbers of rights and
extends the same structural viewpoint to the Russian and geometric-Asian settings,
as well as to independent random maturity.  
The Russian option goes back to Shepp and Shiryaev~\cite{shepp1,shepp2}; 
	for geometric-average Asian options see Kemna and Vorst~\cite{kemnavorst}, 
	and for American-style Asian stopping regions see, 
	for example, Dai and Kwok~\cite{daikwok}. 
General references on optimal stopping include Chow, Robbins and Siegmund~\cite{chow}, 
	Neveu~\cite{neveu}, Peskir and Shiryaev~\cite{peskir}, and Ano~\cite{anobook}.

The remainder of the paper is organised as follows.  
Section~\ref{sec:general} gives the recursive Snell envelope formulation. 
Section~\ref{sec:put} develops the marginal-value method for the American put, 
	and Section~\ref{sec:random} treats its random-maturity version.  
Section~\ref{sec:russian} studies the Russian option after the numeraire reduction, 
	including random maturity.  
Section~\ref{sec:asian} treats the geometric-average Asian put and its random-maturity extension.
%Section~\ref{sec:conclusion} concludes.  
Appendix~\ref{app:operator} collects the median-operation facts used repeatedly, and 
	Appendix~\ref{app:smoothfit} records the computations underlying the fixed-mesh smooth-fit discussion.
	
%%%%%%%%%%%%%%%%%%%%%%%%%%%%%%%%%%%%%%%%%%%%%%%
\section{Finite-horizon multiple stopping} %via recursive Snell envelopes}
\label{sec:general}
%%%%%%%%%%%%%%%%%%%%%%%%%%%%%%%%%%%%%%%%%%%%%%%
Throughout this section, time is \emph{calendar} time and is denoted by $\nu$.
Let $(\Omega,\F,\PP)$ be a probability space carrying a filtration
	$\mathbb{F}=(\F_\nu)_{\nu=0}^{N}$ with $N<\infty$, and let
	$X=(X_\nu)_{\nu=0}^{N}$ be an $\mathbb{F}$-adapted, integrable reward sequence.
We use the auxiliary non-exercise time (cemetery time) $\partial:=+\infty$.  
It represents the event that a right is never exercised, and we set
	\begin{equation}\label{eq:nplus}
  		\F_{\partial}:=\F_N,\qquad X_{\partial}:=0 .
	\end{equation}
Set $\mathbb{T}:=\{0,1,\dots,N\}\cup\{\partial\}$, ordered so that
	$\nu<\partial$ for every $\nu\le N$.  
Whenever a successor $\nu+1$ is written with $\nu\in\mathbb{T}$, we use the convention
	$N+1=\partial$ and $\partial\,+1=\partial$.  
For $\nu\in\mathbb{T}$ let $\T^{\nu}$ be the set of
	$\mathbb{F}$-stopping times with values in $\{\mu\in\mathbb{T}:\mu\ge\nu\}$.  
We write $\T:=\T^{0}$ and
	\begin{align*}
   		\bar X \;:=\; \max_{0\le\nu\le N}|X_\nu| ,
	\end{align*}
	which is integrable because $N<\infty$; we record this as
	\begin{equation}\label{eq:A1}
  		\textbf{(A1)}\qquad \E[\bar X]<\infty .
	\end{equation}

%%%%%%%%%%%%%%%%%%%%%%%%%%%%%%%%%%%%%%%%%%%%%%%
\subsection{Single stopping}
%%%%%%%%%%%%%%%%%%%%%%%%%%%%%%%%%%%%%%%%%%%%%%%
We first recall the classical result of Snell~\cite{snell} in the form in which we shall use it.
%%%%%%%%%%%%%%%%%%%%%%%%%%%%%%%%%%%%%%%%%%%%%%%
\begin{theorem}%[Snell]
\label{thm:snell}
Assume \eqref{eq:A1}.  
Define recursively
	\begin{equation*}
  		V_{\partial}:=0,\qquad
  		V_\nu:=\max\big\{X_\nu,\ \E[V_{\nu+1}\mid\F_\nu]\big\},\quad \nu=N,N-1,\dots,0 ,
	\end{equation*}
	with the convention $V_{N+1}:=V_{\partial}=0$.  
Then:
	\begin{enumerate}[(a)]
	\item $(V_\nu)$ is the smallest supermartingale dominating $(X_\nu)$;
	\item for every $\sigma\in\T$,
      		$V_\sigma=\esup_{\tau\in\T^{\sigma}}\E[X_\tau\mid\F_\sigma]$ a.s.;
	\item $\tau^{*}(\sigma):=\min\{\nu\ge\sigma:\ V_\nu=X_\nu\}$
      		(with $\min\emptyset:=\partial$) belongs to $\T^{\sigma}$ and
      		$V_\sigma=\E[X_{\tau^{*}(\sigma)}\mid\F_\sigma]$ a.s.;
	\item the stopped process $(V_{\tau^{*}(\sigma)\wedge\nu})_{\nu\ge\sigma}$ is a
      		martingale.
	\end{enumerate}
\end{theorem}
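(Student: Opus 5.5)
The plan is the standard backward-induction argument on the finite index set $\mathbb{T}$. The cemetery time only requires checking that the recursion at $\nu=N$ compares $X_N$ with $\E[V_\partial\mid\F_N]=0$. I will first record the elementary facts. Each $V_\nu$ is $\F_\nu$-measurable and integrable, with $|V_\nu|\le \E[\bar X\mid\F_\nu]$; this follows by downward induction from \eqref{eq:A1}. By construction $V_\nu\ge X_\nu$ and $V_\nu\ge \E[V_{\nu+1}\mid\F_\nu]$, so $(V_\nu)_{\nu\in\mathbb{T}}$ is a supermartingale dominating $X$ (with $V_\partial=0=X_\partial$). For minimality in (a), let $U$ be any supermartingale with $U\ge X$ and $U_\partial\ge 0$. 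Downward induction gives $U_\nu\ge\max\{X_\nu,\E[U_{\nu+1}\mid\F_\nu]\}\ge\max\{X_\nu,\E[V_{\nu+1}\mid\F_\nu]\}=V_\nu$.

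Next I will prove (d) and (c) together for a fixed $\sigma\in\T$. On $\{\tau^*(\sigma)>\nu\ge\sigma\}$ we have $V_\nu>X_\nu$, hence $V_\nu=\E[V_{\nu+1}\mid\F_\nu]$. Writing
\begin{equation*}
V_{\tau^*\wedge(\nu+1)}-V_{\tau^*\wedge\nu}=\mathbf 1_{\{\tau^*>\nu\}}\bigl(V_{\nu+1}-\E[V_{\nu+1}\mid\F_\nu]\bigr)
\end{equation*}
on $\{\nu\ge\sigma\}$, and noting that $\{\tau^*>\nu\}\cap\{\sigma\le\nu\}\in\F_\nu$, the increments have zero conditional mean. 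This gives the martingale property (d). Since the index set is finite (at most $N+2$ points), the stopped martingale can be evaluated at the terminal index $\partial$ to get $V_\sigma=\E[V_{\tau^*(\sigma)}\mid\F_\sigma]$. Because $V_{\tau^*}=X_{\tau^*}$ (including on $\{\tau^*=\partial\}$, where both vanish), we obtain $V_\sigma=\E[X_{\tau^*(\sigma)}\mid\F_\sigma]$. That $\tau^*(\sigma)$ is a stopping time with values $\ge\sigma$ follows from $\{\tau^*(\sigma)\le\nu\}=\bigcup_{\mu\le\nu}\{\sigma\le\mu,\ V_\mu=X_\mu\}\in\F_\nu$.

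For (b), take any $\tau\in\T^\sigma$. Optional sampling for the supermartingale $V$ between the bounded (finite-valued in $\mathbb{T}$) stopping times $\sigma\le\tau$ gives $V_\sigma\ge\E[V_\tau\mid\F_\sigma]\ge\E[X_\tau\mid\F_\sigma]$. It suffices to prove the discrete version by summing increments $\mathbf 1_{\{\sigma\le\nu<\tau\}}(V_{\nu+1}-V_\nu)$ and conditioning; integrability is guaranteed by the bound by $\bar X$. Combined with the attainment in (c), this yields the essential-supremum identity.

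The only step I expect to need care is the bookkeeping at the cemetery time. I must check that $\F_\partial=\F_N$ makes $\{\tau=\partial\}$ and the conventions $N+1=\partial+1=\partial$ consistent with the stopping-time and optional-sampling arguments. In particular, $V_\partial=\E[V_\partial\mid\F_N]$ should be treated as an absorbing step, so that the sums above terminate. Everything else is routine finite backward induction.
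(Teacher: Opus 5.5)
Your proposal is correct and is essentially the standard finite-horizon backward-induction proof: the supermartingale property with discrete optional sampling gives the upper bound, and the stopped martingale up to $\tau^*(\sigma)$ gives attainment; the paper cites this argument from Neveu and Peskir--Shiryaev rather than proving it. Your explicit requirement $U_\partial\ge 0$ in the minimality step is the right reading of (a): minimality holds among supermartingales indexed by $\mathbb{T}$, whereas on $\{0,\dots,N\}$ alone the process with $V_N=\max\{X_N,0\}$ need not be minimal when $X_N$ can be negative.
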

%%%%%%%%%%%%%%%%%%%%%%%%%%%%%%%%%%%%%%%%%%%%%%%
Part (b) in the form ``conditionally on $\F_\sigma$ for an arbitrary stopping
	time $\sigma$'' is what makes the induction of Theorem~\ref{thm:multiple} work,
	and this is why we have stated it that way; see Neveu \cite[Ch.\ VI]{neveu} or
	Peskir and Shiryaev \cite[Ch.\ I]{peskir}.

%% ---------------------------------------------------------------------
\subsection{Multiple stopping}
%% ---------------------------------------------------------------------
%%%%%%%%%%%%%%%%%%%%%%%%%%%%%%%%%%%%%%%%%%%%%%%
\begin{definition}\label{def:admissible}
For $m\in\N$ and $\sigma\in\T$ let $\T^{[m]}_\sigma$ denote the set of vectors
	$\vec\tau=(\tau_m,\tau_{m-1},\dots,\tau_1)$ of elements of $\T^{\sigma}$ for
	which there exists $\ell\in\{0,1,\dots,m\}$ such that the finite coordinates
	are exactly
	\begin{align*}
 	 	\tau_m,\tau_{m-1},\dots,\tau_{m-\ell+1}.
	\end{align*}
If $\ell\ge1$ they satisfy
	\begin{align*}
  		\sigma\le\tau_m<\tau_{m-1}<\cdots<\tau_{m-\ell+1}\le N,
	\end{align*}
	while the remaining coordinates
	$\tau_{m-\ell},\dots,\tau_1$ (if any) are all equal to $\partial$.  
Thus the finite entries form the chronologically ordered block of exercised rights, 
	while all unused rights are placed at the non-exercise time.  
We set $X_{\vec\tau}:=\sum_{i=1}^{m}X_{\tau_i}$; 
	by \eqref{eq:nplus} unexercised rights contribute nothing.
\end{definition}
%%%%%%%%%%%%%%%%%%%%%%%%%%%%%%%%%%%%%%%%%%%%%%%
The nested construction is
	\begin{equation*}
  		V^{[0]}\equiv 0,\qquad
  		X^{[i]}_\nu := X_\nu+\E\big[V^{[i-1]}_{\nu+1}\mid\F_\nu\big],\qquad
  		V^{[i]} := \text{Snell envelope of } X^{[i]},
	\end{equation*}
	for $i=1,\dots,m$, with $X^{[i]}_{\partial}:=0$ and $V^{[i]}_{\partial}:=0$.  
In particular $X^{[1]}=X$ and $V^{[1]}$ is the ordinary Snell envelope.
%%%%%%%%%%%%%%%%%%%%%%%%%%%%%%%%%%%%%%%%%%%%%%%
\begin{lemma}%[Integrability]
\label{lem:integrability}
Assume \eqref{eq:A1} and $N<\infty$.  
Then for every $i\in\{1,\dots,m\}$,
	\begin{align*}
   		|V^{[i]}_\nu|\le \E\big[\,i\,\bar X\mid \F_\nu\big]\quad\text{a.s.},
  		 \qquad
   		\E\Big[\max_{0\le\nu\le N}\big|X^{[i]}_\nu\big|\Big]\;\le\;(N+1)\,i\,\E[\bar X]\;<\;\infty .
	\end{align*}
Consequently Theorem~\ref{thm:snell} applies to each $X^{[i]}$.
\end{lemma}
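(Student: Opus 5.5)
We argue by induction on $i$, proving simultaneously the conditional bound for $V^{[i]}$ and a pointwise bound for $X^{[i]}$. The pointwise bound we aim for is
\begin{equation*}
  |X^{[i]}_\nu| \le \E\big[\,i\,\bar X \mid \F_\nu\big]\quad\text{a.s.},\qquad 0\le\nu\le N .
\end{equation*}

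For the base case, $X^{[1]}=X$ and $|X_\nu|\le\bar X$. Since $X_\nu$ is $\F_\nu$-measurable, $|X_\nu|=\E[|X_\nu|\mid\F_\nu]\le\E[\bar X\mid\F_\nu]$.

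For the induction step, assume $|V^{[i-1]}_\mu|\le\E[(i-1)\bar X\mid\F_\mu]$ for all $\mu\in\mathbb{T}$. This is trivially true at $\mu=\partial$, where $V^{[i-1]}_\partial=0$, and also for $i-1=0$. By the tower property and conditional Jensen,
\begin{equation*}
  \bigl|\E[V^{[i-1]}_{\nu+1}\mid\F_\nu]\bigr| \le \E\bigl[\E[(i-1)\bar X\mid\F_{\nu+1}]\mid\F_\nu\bigr]=\E[(i-1)\bar X\mid\F_\nu].
\end{equation*}
At $\nu=N$ the successor is $\partial$, with $\F_\partial=\F_N$ and value $0$. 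Adding $|X_\nu|\le\E[\bar X\mid\F_\nu]$ gives the bound $|X^{[i]}_\nu|\le\E[i\bar X\mid\F_\nu]$.

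Next we pass from $X^{[i]}$ to $V^{[i]}$ by backward induction in $\nu$ using the Snell recursion $V^{[i]}_\nu=\max\{X^{[i]}_\nu,\E[V^{[i]}_{\nu+1}\mid\F_\nu]\}$ with $V^{[i]}_{N+1}=0$. If $|V^{[i]}_{\nu+1}|\le\E[i\bar X\mid\F_{\nu+1}]$, then the same tower/Jensen step bounds the continuation term by $\E[i\bar X\mid\F_\nu]$. The elementary inequality $|\max\{a,b\}|\le\max\{|a|,|b|\}$ then yields $|V^{[i]}_\nu|\le\E[i\bar X\mid\F_\nu]$. This step does not need Theorem~\ref{thm:snell}: the recursion only involves conditional expectations of integrable variables, and these are well defined because of the bound at $\nu+1$. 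Strictly, $V^{[i]}$ is defined as the Snell envelope, so we should note that it is given by this explicit recursion once $X^{[i]}$ is integrable, which the pointwise bound already guarantees.

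For the maximal bound, we estimate $\max_\nu|X^{[i]}_\nu|\le\sum_{\nu=0}^N|X^{[i]}_\nu|\le\sum_{\nu=0}^N\E[i\bar X\mid\F_\nu]$. Taking expectations gives $(N+1)\,i\,\E[\bar X]<\infty$ by (A1). The sum is used because a maximum of conditional expectations is not directly controlled by $\E[\bar X]$; this is exactly why the factor $N+1$ appears. Hence $X^{[i]}$ satisfies (A1), and Theorem~\ref{thm:snell} applies.

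The only delicate points are bookkeeping ones: the conventions at $\partial$ and $\nu=N$, and ordering the argument so that integrability of $V^{[i-1]}$ is available before $X^{[i]}$ is formed. None of these is a real obstacle.
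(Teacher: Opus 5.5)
Your proof is correct and follows the same structure as the paper's: induction on $i$, the pointwise bound $|X^{[i]}_\nu|\le\E[i\bar X\mid\F_\nu]$ obtained from the induction hypothesis, and summation over $\nu$ for the maximal bound. The only difference is how you bound $V^{[i]}$. You use backward induction on the recursion $V^{[i]}_\nu=\max\{X^{[i]}_\nu,\E[V^{[i]}_{\nu+1}\mid\F_\nu]\}$, whereas the paper uses the essential-supremum representation of Theorem~\ref{thm:snell}(b); your route has the minor advantage of not invoking Snell's theorem for $X^{[i]}$ before its integrability has been established.
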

%%%%%%%%%%%%%%%%%%%%%%%%%%%%%%%%%%%%%%%%%%%%%%%
\begin{proof}
For the first bound, induct on $i$.  
For $i=1$, 
	$$|V^{[1]}_\nu|=|\esup_{\tau\ge\nu}\E[X_\tau\mid\F_\nu]|\le \E[\bar X\mid\F_\nu]$$ 
	by the monotonicity of conditional expectation.  
If the bound holds for $i-1$, then
	$$|X^{[i]}_\nu|\le |X_\nu|+\E[(i-1)\bar X\mid\F_\nu]\le \E[i\bar X\mid\F_\nu],$$
	whence 
	$$|V^{[i]}_\nu|\le\esup_{\tau\ge\nu}\E[|X^{[i]}_\tau|\mid\F_\nu]
	\le \E[i\bar X\mid\F_\nu].$$ 
 For the second bound, since $N<\infty$,
	\begin{align*}
  		\E\Big[\max_{\nu\le N}|X^{[i]}_\nu|\Big]
  		\;\le\;\sum_{\nu=0}^{N}\E\big[|X^{[i]}_\nu|\big]
  		\;\le\;\sum_{\nu=0}^{N} i\,\E[\bar X]
  		\;=\;(N+1)\,i\,\E[\bar X] . \qedhere
	\end{align*}
\end{proof}

%%%%%%%%%%%%%%%%%%%%%%%%%%%%%%%%%%%%%%%%%%%%%%%
\begin{definition}%[Recursive optimal stopping times]
\label{def:recursive}
Fix $\sigma\in\T$ and $m\in\N$.  Define
	\begin{equation}\label{eq:taustar}
  		\tau^{*}_m:=\min\big\{\nu\ge\sigma:\ V^{[m]}_\nu=X^{[m]}_\nu\big\},
 	 	\qquad
  		\tau^{*}_{i}:=\min\big\{\nu\ge\tau^{*}_{i+1}+1:\ V^{[i]}_\nu=X^{[i]}_\nu\big\}
	\end{equation}
	for $i=m-1,m-2,\dots,1$, with $\min\emptyset:=\partial$ and the successor
	convention stated above.
\end{definition}
%%%%%%%%%%%%%%%%%%%%%%%%%%%%%%%%%%%%%%%%%%%%%%%
The recursion in \eqref{eq:taustar} is essential.  
Had we defined $\tau^{*}_i$ as the first time {after $\sigma$} 
	at which $V^{[i]}$ meets $X^{[i]}$, the
	vector $(\tau^{*}_m,\dots,\tau^{*}_1)$ would in general fail to be increasing,
	and would therefore fail to be admissible in the sense of Definition~\ref{def:admissible}.

%%%%%%%%%%%%%%%%%%%%%%%%%%%%%%%%%%%%%%%%%%%%%%%
\begin{theorem}%[Recursive Snell-envelope representation]
\label{thm:multiple}
Assume \eqref{eq:A1} and $N<\infty$.  
Then for every $m\in\N$ and every
$\sigma\in\T$,
	\begin{equation}\label{eq:multiplevalue}
  		V^{[m]}_\sigma\;=\;\esup_{\vec\tau\in\T^{[m]}_\sigma}\E\big[X_{\vec\tau}\mid\F_\sigma\big]
  	\qquad\text{a.s.},
	\end{equation}
	and the vector $\vec\tau^{\,*}=(\tau^{*}_m,\dots,\tau^{*}_1)$ of
	Definition~\ref{def:recursive} is admissible and optimal:
	$V^{[m]}_\sigma=\E[X_{\vec\tau^{\,*}}\mid\F_\sigma]$ a.s.
\end{theorem}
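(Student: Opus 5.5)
We argue by induction on $m$, proving for every $\sigma\in\T$ both the value representation \eqref{eq:multiplevalue} and the optimality of $\vec\tau^{\,*}$. For $m=1$ we have $X^{[1]}=X$, and $\T^{[1]}_\sigma$ is simply $\T^\sigma$: a single coordinate, either finite or equal to $\partial$. So the claim is Theorem~\ref{thm:snell}(b),(c). Lemma~\ref{lem:integrability} guarantees that Theorem~\ref{thm:snell} may be applied to every $X^{[i]}$, and we use this freely below.

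Assume the result holds for $m-1$ and every starting stopping time. We first prove the upper bound. Fix $\vec\tau=(\tau_m,\dots,\tau_1)\in\T^{[m]}_\sigma$. If $\tau_m=\partial$, then all coordinates are $\partial$ and $X_{\vec\tau}=0$. In that case we still need $V^{[m]}_\sigma\ge0$. This holds because $V^{[m]}\ge X^{[m]}_{\partial}=0$ via the Snell representation with $\tau=\partial$.

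Otherwise we condition on $\F_{\tau_m}$, working on the event $\{\tau_m\le N\}$ after splitting by it. The tail $(\tau_{m-1},\dots,\tau_1)$ belongs to $\T^{[m-1]}_{\tau_m+1}$; the successor convention handles $\tau_m=N$, where the tail is all $\partial$. The induction hypothesis at the stopping time $\tau_m+1$, together with the tower property, gives
\[
\E[X_{\vec\tau}\mid\F_{\tau_m}]\le X_{\tau_m}+\E[V^{[m-1]}_{\tau_m+1}\mid\F_{\tau_m}]=X^{[m]}_{\tau_m}.
\]
Here we need $X^{[m]}_\nu$ evaluated at a random time to agree with the stopping-time version. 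This holds by localisation on $\{\tau_m=\nu\}\in\F_\nu$, and the same localisation is used repeatedly. Then Theorem~\ref{thm:snell}(b) gives $\E[X^{[m]}_{\tau_m}\mid\F_\sigma]\le V^{[m]}_\sigma$. The case $\tau_m=\partial$ fits the same scheme with $X^{[m]}_\partial=0$.

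For attainment, we check admissibility first. The recursion \eqref{eq:taustar} makes each $\tau^*_i$ a stopping time with $\tau^*_i\ge\tau^*_{i+1}+1$ whenever finite. Once some $\tau^*_i=\partial$, all later ones are $\partial$ by the convention $\partial+1=\partial$. So the finite entries form an initial strictly increasing block and $\vec\tau^{\,*}\in\T^{[m]}_\sigma$.

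Next, $(\tau^*_{m-1},\dots,\tau^*_1)$ is exactly the recursive vector of Definition~\ref{def:recursive} for $m-1$ rights started at $\tau^*_m+1$. The induction hypothesis therefore gives
\[
\E[\textstyle\sum_{i<m}X_{\tau^*_i}\mid\F_{\tau^*_m+1}]=V^{[m-1]}_{\tau^*_m+1}.
\]
Conditioning down to $\F_{\tau^*_m}$ yields $\E[X_{\vec\tau^{\,*}}\mid\F_{\tau^*_m}]=X^{[m]}_{\tau^*_m}$ on $\{\tau^*_m\le N\}$, and both sides vanish on $\{\tau^*_m=\partial\}$. Theorem~\ref{thm:snell}(c) then gives $\E[X^{[m]}_{\tau^*_m}\mid\F_\sigma]=V^{[m]}_\sigma$. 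Combining this with the upper bound proves \eqref{eq:multiplevalue} together with the optimality of $\vec\tau^{\,*}$.

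The main obstacle is the bookkeeping at the random time $\tau_m+1$. We need that $\tau_m+1$ is a stopping time in $\T$. We need that $\F_{\tau_m}\subset\F_{\tau_m+1}$, including on $\{\tau_m=N\}$, where $\tau_m+1=\partial$ and $\F_\partial=\F_N$. And we need that the induction hypothesis, stated for deterministic-indexed processes, applies at a random starting time. I would handle all three by decomposing over the events $\{\tau_m=\nu\}$, $\nu\in\mathbb{T}$, applying the hypothesis at each deterministic $\nu+1$, and reassembling. This works because the essential supremum and the conditional expectations localise on $\F_\nu$-measurable partitions.
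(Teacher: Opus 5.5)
Your proposal is correct and follows the paper's proof essentially step for step: induction on $m$ with the base case from Theorem~\ref{thm:snell}(b),(c), the upper bound via the induction hypothesis at $\tau_m+1$ followed by Theorem~\ref{thm:snell}(b) applied to $X^{[m]}$, and attainment via Theorem~\ref{thm:snell}(c) together with the observation that $(\tau^*_{m-1},\dots,\tau^*_1)$ is the recursive vector for $m-1$ rights started at $\tau^*_m+1$. The localisation details you spell out are left implicit in the paper but are exactly what its argument uses: that $\tau_m+1$ is a stopping time, that $\F_{\tau_m}\subset\F_{\tau_m+1}$, and that $X^{[m]}_{\tau_m}=X_{\tau_m}+\E[V^{[m-1]}_{\tau_m+1}\mid\F_{\tau_m}]$.
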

%%%%%%%%%%%%%%%%%%%%%%%%%%%%%%%%%%%%%%%%%%%%%%%
\begin{proof}
We argue by induction on $m$; the case $m=1$ is Theorem~\ref{thm:snell}(b),(c).
Let $m\ge2$ and assume the statement for $m-1$ (for every stopping time 
	in place of $\sigma$).

\emph{Step 1: ``$\ge$'' in \eqref{eq:multiplevalue}.}
Let $\vec\tau=(\tau_m,\dots,\tau_1)\in\T^{[m]}_\sigma$.  
On $\{\tau_m\le N\}$ the truncated vector $(\tau_{m-1},\dots,\tau_1)$ belongs to
	$\T^{[m-1]}_{\tau_m+1}$, so the induction hypothesis applied with $\tau_m+1$ in
	place of $\sigma$ gives
	\begin{align*}
   		\E\Big[\sum_{i=1}^{m-1}X_{\tau_i}\;\Big|\;\F_{\tau_m+1}\Big]
   		\;\le\; V^{[m-1]}_{\tau_m+1}\qquad\text{a.s.}
	\end{align*}
On $\{\tau_m=\partial\}$ all $\tau_i=\partial$ and both sides vanish.  
Taking conditional expectations,
	\begin{align*}
  		 \E\big[X_{\vec\tau}\mid\F_\sigma\big]
  		 \;\le\;\E\Big[X_{\tau_m}+\E\big[V^{[m-1]}_{\tau_m+1}\mid\F_{\tau_m}\big]
       		 \;\Big|\;\F_\sigma\Big]
   		\;=\;\E\big[X^{[m]}_{\tau_m}\mid\F_\sigma\big]
   		\;\le\; V^{[m]}_\sigma ,
	\end{align*}
	the last step by Theorem~\ref{thm:snell}(b) applied to $X^{[m]}$, which is
	legitimate by Lemma~\ref{lem:integrability}.  
Taking the essential supremum over $\vec\tau$ gives ``$\ge$''.

\emph{Step 2: ``$\le$'' in \eqref{eq:multiplevalue}, and optimality.}
By Theorem~\ref{thm:snell}(c) applied to $X^{[m]}$ and by the definition of
	$\tau^{*}_m$,
	\begin{align*}
   		V^{[m]}_\sigma=\E\big[X^{[m]}_{\tau^{*}_m}\mid\F_\sigma\big]
   		=\E\Big[X_{\tau^{*}_m}+\E\big[V^{[m-1]}_{\tau^{*}_m+1}\mid\F_{\tau^{*}_m}\big]
     		\;\Big|\;\F_\sigma\Big]
   		=\E\Big[X_{\tau^{*}_m}+V^{[m-1]}_{\tau^{*}_m+1}\;\Big|\;\F_\sigma\Big].
	\end{align*}
Now apply the induction hypothesis with $\sigma$ replaced by
	$\tau^{*}_m+1$.  
By Definition~\ref{def:recursive} the optimal vector for the
	$(m-1)$-fold problem started at $\tau^{*}_m+1$ is exactly
	$(\tau^{*}_{m-1},\dots,\tau^{*}_1)$, whence
	\begin{align*}
  		V^{[m-1]}_{\tau^{*}_m+1}
  		=\E\Big[\sum_{i=1}^{m-1}X_{\tau^{*}_i}\;\Big|\;\F_{\tau^{*}_m+1}\Big] .
	\end{align*}
Substituting, $V^{[m]}_\sigma=\E[X_{\vec\tau^{\,*}}\mid\F_\sigma]$.
By construction, consecutive finite entries of $(\tau_m^*,\dots,\tau_1^*)$ are strictly increasing.  
Once one entry equals
	$\partial$, the successor convention forces every later entry to equal $\partial$.
Hence $\vec\tau^{\,*}\in\T^{[m]}_\sigma$, so the right-hand side of
	\eqref{eq:multiplevalue} is at least $V^{[m]}_\sigma$.  
Together with Step~1 this proves \eqref{eq:multiplevalue} and the optimality of $\vec\tau^{\,*}$.
\end{proof}

%%%%%%%%%%%%%%%%%%%%%%%%%%%%%%%%%%%%%%%%%%%%%%%
\begin{corollary}\label{cor:monotone_m}
$V^{[0]}_\nu\le V^{[1]}_\nu\le V^{[2]}_\nu\le\dots$, and if $X\ge0$ then
$V^{[m]}_\nu\le m\,\E[\bar X\mid\F_\nu]$.
\end{corollary}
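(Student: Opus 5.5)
The plan is to read both claims directly off the value representation \eqref{eq:multiplevalue} of Theorem~\ref{thm:multiple}, so that no new Snell-envelope estimates are needed. Monotonicity in $m$ comes from embedding admissible vectors for $m-1$ rights into those for $m$ rights. The upper bound comes from a pathwise bound on $X_{\vec\tau}$.

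\emph{Monotonicity.} Fix $m\ge1$ and $\nu$, and regard $\nu$ as a constant stopping time. The case $V^{[0]}\le V^{[1]}$ holds because the vector $(\partial)\in\T^{[1]}_\nu$ has zero payoff, so $V^{[1]}_\nu\ge0=V^{[0]}_\nu$. For $m\ge2$, take $\vec\tau=(\tau_{m-1},\dots,\tau_1)\in\T^{[m-1]}_\nu$ and append a cemetery coordinate at the end, obtaining $(\tau_{m-1},\dots,\tau_1,\partial)$. Relabel it as $(\tau'_m,\dots,\tau'_1)$, where $\tau'_{i+1}=\tau_i$ and $\tau'_1=\partial$.
\begin{itemize}
\item If $\vec\tau$ has $\ell$ finite entries, these occupy the leading block $\tau'_m,\dots,\tau'_{m-\ell+1}$ in increasing order, and all remaining entries equal $\partial$.
\item Hence the extended vector lies in $\T^{[m]}_\nu$ in the sense of Definition~\ref{def:admissible}.
\item Since $X_\partial=0$, it has the same payoff $X_{\vec\tau}$.
\end{itemize}
Taking the essential supremum, \eqref{eq:multiplevalue} gives $V^{[m-1]}_\nu\le V^{[m]}_\nu$ a.s. This embedding bookkeeping is the only step needing care: one must check that the extended vector is admissible exactly in the format of Definition~\ref{def:admissible}.

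\emph{Upper bound.} Assume $X\ge0$. For any $\vec\tau\in\T^{[m]}_\nu$, each term satisfies $X_{\tau_i}\le\bar X$, whether $\tau_i$ is finite or equals $\partial$ (where $X_\partial=0\le\bar X$). Therefore $X_{\vec\tau}\le m\bar X$ pathwise, and monotonicity of conditional expectation gives $\E[X_{\vec\tau}\mid\F_\nu]\le m\,\E[\bar X\mid\F_\nu]$. Taking the essential supremum and applying \eqref{eq:multiplevalue} yields the bound.

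The upper bound also follows from Lemma~\ref{lem:integrability}, whose first estimate gives $|V^{[m]}_\nu|\le\E[m\bar X\mid\F_\nu]$ without the sign assumption. I would mention this as the alternative route.
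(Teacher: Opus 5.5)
Your proposal is correct and follows the paper's proof. You embed an $(m-1)$-right vector into $\T^{[m]}_\nu$ by shifting its labels up one level and appending a cemetery coordinate, which leaves the payoff unchanged since $X_\partial=0$, and you get the upper bound from the pathwise estimate $X_{\vec\tau}\le m\bar X$ in \eqref{eq:multiplevalue}. Your separate check of $V^{[0]}\le V^{[1]}$ and your remark that Lemma~\ref{lem:integrability} already gives the bound, even without $X\ge0$, are harmless refinements.
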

%%%%%%%%%%%%%%%%%%%%%%%%%%%%%%%%%%%%%%%%%%%%%%%
\begin{proof}
Immediate from \eqref{eq:multiplevalue}: if
	$(\tau_{m-1},\dots,\tau_1)$ is admissible for $m-1$ rights, define an
	$m$-right vector by shifting the labels of its finite block one level upward and
	placing one additional unused right at the terminal end of the vector.  
In particular, if all $m-1$ rights are used, take
	$(\widetilde\tau_m,\dots,\widetilde\tau_2)
		=(\tau_{m-1},\dots,\tau_1)$ and $\widetilde\tau_1=\partial$.
The total reward is unchanged because $X_{\partial}=0$.  
The upper bound follows from $0\le X_{\tau_i}\le\bar X$.
\end{proof}

%%%%%%%%%%%%%%%%%%%%%%%%%%%%%%%%%%%%%%%%%%%%%%%
\subsection{The Markovian case}\label{sec:markov}
%%%%%%%%%%%%%%%%%%%%%%%%%%%%%%%%%%%%%%%%%%%%%%%
Let $(Z_\nu)_{\nu=0}^{N}$ be a time-homogeneous Markov chain on a state space
	$E$ with transition kernel $P$, let $\F_\nu=\sigma(Z_0,\dots,Z_\nu)$, and let
	the reward be $X_\nu=\alpha^{\nu}g(Z_\nu)$ for a measurable function $g$ with $g(z)\ge0$ and a
	discount factor $\alpha\in(0,1]$.  
Then the Snell envelopes admit the Markov representation
	\begin{align*}
   		V^{[m]}_\nu=\alpha^\nu V^{[m]}_{N-\nu}(Z_\nu), \qquad n:=N-\nu ,
	\end{align*}
	for deterministic functions $z\mapsto V^{[m]}_n(z)$.  
Thus $V^{[m]}_n(z)$ is the value
	measured in time-$\nu$ units when $n$ periods remain, $m$ rights are in hand and
	the current state is $z$.  
The dynamic programming equations read
	\begin{equation}\label{eq:dpgeneral}
  		V^{[m]}_0(z)=g(z)\ (m\ge1),\qquad V^{[0]}_n(z)\equiv0,
	\end{equation}
	\begin{equation}\label{eq:dpgeneral2}
  		V^{[m]}_n(z)=\max\Big\{\,g(z)+\alpha\,\E_z\big[V^{[m-1]}_{n-1}(Z_1)\big],
             		 \;\alpha\,\E_z\big[V^{[m]}_{n-1}(Z_1)\big]\Big\},
  			\qquad n\ge1 .
	\end{equation}
By Theorem~\ref{thm:multiple}, the optimal exercise rule in \emph{calendar}
	time is
	\begin{align}
  		\sigma^{*}_m&=\min\big\{\nu\in\{0,\dots,N\}: Z_\nu\in D^{[m]}_{N-\nu}\big\},
  			\label{eq:calendar}\\
 		 \sigma^{*}_{i}&=\min\big\{\nu\in\{0,\dots,N\}: \nu>\sigma^{*}_{i+1},\
       		Z_\nu\in D^{[i]}_{N-\nu}\big\}.\notag
			\end{align}
$i=m-1,\dots,1$, where, for $n\ge1$,
	\begin{align*}
 		D^{[i]}_n:=\Big\{z:\  g(z)+\alpha\E_z[V^{[i-1]}_{n-1}(Z_1)]
  			\ge \alpha\E_z[V^{[i]}_{n-1}(Z_1)]\Big\},
	\end{align*}
	and $D^{[i]}_0:=E$.  
At a tie either action is optimal; later, for the put, we
	will choose continuation at zero-payoff out-of-the-money ties.  
We use the convention $\min\emptyset:=\partial$.  
The finite stopping times produced by
	\eqref{eq:calendar} are strictly increasing; any unused rights are placed at $\partial$.
We use $n$ for the remaining time and $\nu$ for calendar time throughout.

%%%%%%%%%%%%%%%%%%%%%%%%%%%%%%%%%%%%%%%%%%%%%%%
\section{American put}%: marginal values and optimal exercise boundaries}
\label{sec:put}
%%%%%%%%%%%%%%%%%%%%%%%%%%%%%%%%%%%%%%%%%%%%%%%
\subsection{The model and the one-step operator}
%%%%%%%%%%%%%%%%%%%%%%%%%%%%%%%%%%%%%%%%%%%%%%%
Let $r\ge0$ be the one-period interest rate, $\alpha:=(1+r)^{-1}\in(0,1]$, and
	let $\lambda>1$ satisfy
	\begin{equation}\label{eq:noarb}
  		\lambda^{-1}<1+r<\lambda ,
	\end{equation}
	which is the no-arbitrage condition.  
The stock price is the geometric random walk
	\begin{align*}
  		S_\nu=S_0\,\lambda^{\varepsilon_1+\dots+\varepsilon_\nu},
  		\qquad \varepsilon_\nu\in\{-1,+1\}\ \text{i.i.d.},
	\end{align*}
	and, from this point through Section~\ref{sec:random}, $\PP$ denotes the unique
	martingale measure, under which
	\begin{equation}\label{eq:pq}
  		p:=\PP(\varepsilon_\nu=+1)=\frac{\alpha^{-1}-\lambda^{-1}}{\lambda-\lambda^{-1}},
  			\qquad
  		q:=\PP(\varepsilon_\nu=-1)=\frac{\lambda-\alpha^{-1}}{\lambda-\lambda^{-1}}=1-p,
	\end{equation}
	both in $(0,1)$ by \eqref{eq:noarb}.  
The payoff of the put with strike $K>0$ is
	\begin{equation*}
  		g(x):=(K-x)^{+},\qquad x>0 .
	\end{equation*}

Define the one-step (discounted) valuation operator, acting on functions
	$\varphi:(0,\infty)\to[0,\infty)$,
	\begin{equation}\label{eq:operator}
  		(\A\varphi)(x):=\alpha\big[\,p\,\varphi(\lambda x)+q\,\varphi(\lambda^{-1}x)\,\big].
	\end{equation}
The following identity is used constantly and is simply the martingale property
	of the discounted price:
	\begin{equation}\label{eq:gain}
  		\alpha\big(p\lambda+q\lambda^{-1}\big)=\alpha(1+r)=1 .
	\end{equation}
%%%%%%%%%%%%%%%%%%%%%%%%%%%%%%%%%%%%%%%%%%%%%%%
\begin{definition}\label{def:M}
Let
	\begin{align*}
 	 	\Mcl:=\Big\{\varphi:(0,\infty)\to[0,\infty)\ \Big|\
  		\varphi\ \text{is nonincreasing and}\ |\varphi(x)-\varphi(y)|\le|x-y|
  		\ \ \forall x,y>0\Big\} ,
	\end{align*}
	the class of nonnegative, nonincreasing, $1$-Lipschitz functions.  
Equivalently,$\varphi\in\Mcl$ if and only if $\varphi\ge0$, $x\mapsto\varphi(x)$ is
	nonincreasing and $x\mapsto x+\varphi(x)$ is nondecreasing; for a differentiable
	$\varphi$ this reads $-1\le\varphi'\le0$.  
All functions occurring below are continuous and piecewise affine, 
	so we shall freely use the derivative notation
	for the (existing) one-sided derivatives.
\end{definition}
%%%%%%%%%%%%%%%%%%%%%%%%%%%%%%%%%%%%%%%%%%%%%%%
\begin{lemma}\label{lem:operator}
Let $\varphi,\psi:(0,\infty)\to[0,\infty)$.
\begin{enumerate}[(i)]
	\item If $\varphi\le\psi$ then $\A\varphi\le\A\psi$.
	\item If $\varphi$ is nonincreasing, so is $\A\varphi$.  If $\varphi$ is convex,
      		so is $\A\varphi$.
	\item If $\varphi$ is $L$-Lipschitz, then $\A\varphi$ is $L$-Lipschitz.  In
      		particular $\A(\Mcl)\subset\Mcl$.
	\item $\Mcl$ is a convex set, closed under $\max$, $\min$, $\med$ and pointwise
      		limits, and $g(\cdot)\in\Mcl$.
	\item If $\varphi(x)=a-bx$ on an interval containing $\lambda x$ and
      		$\lambda^{-1}x$, then $(\A\varphi)(x)=\alpha a-bx$.
\end{enumerate}
\end{lemma}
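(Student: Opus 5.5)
Each part follows from the fact that $\A$ is a positive linear combination of the two composition maps $\varphi\mapsto\varphi(\lambda\,\cdot)$ and $\varphi\mapsto\varphi(\lambda^{-1}\cdot)$, with weights $\alpha p,\alpha q\ge0$. Part (i) is immediate from this positivity. For (ii), both compositions preserve monotonicity, since $x\le y$ implies $\lambda^{\pm1}x\le\lambda^{\pm1}y$, and convexity, since composing with a linear map preserves convexity. Nonnegative combinations preserve both properties.

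For (iii), the key estimate is
\begin{equation*}
  |(\A\varphi)(x)-(\A\varphi)(y)|\le \alpha\big[pL\lambda+qL\lambda^{-1}\big]|x-y| = L|x-y|,
\end{equation*}
where the equality is \eqref{eq:gain}. Nonnegativity and monotonicity are preserved by (i)–(ii), so $\A(\Mcl)\subset\Mcl$.

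For (iv), I will use the equivalent description of $\Mcl$ given in Definition~\ref{def:M}. A function $\varphi$ belongs to $\Mcl$ when $\varphi\ge0$, $\varphi$ is nonincreasing and $x+\varphi(x)$ is nondecreasing.
\begin{itemize}
\item[] \emph{Equivalence.} First I verify this description. If $\varphi$ is nonincreasing and $1$-Lipschitz, then for $x<y$ we have $\varphi(x)-\varphi(y)\le y-x$, so $x+\varphi(x)\le y+\varphi(y)$. Conversely, this inequality together with $\varphi(x)\ge\varphi(y)$ gives $|\varphi(x)-\varphi(y)|\le|x-y|$.
\item[] \emph{Convexity.} All three defining conditions are preserved by convex combinations, since $x+(t\varphi+(1-t)\psi)(x)=t(x+\varphi(x))+(1-t)(x+\psi(x))$.
\item[] \emph{Lattice operations.} $\max$ and $\min$ of nonnegative nonincreasing functions are nonnegative and nonincreasing. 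Moreover $x+\max(\varphi,\psi)(x)=\max(x+\varphi(x),x+\psi(x))$, which is nondecreasing, and likewise for $\min$.
\item[] \emph{Median.} Whatever the precise convention in Appendix~\ref{app:operator}, the median of three functions is $\med(a,b,c)=\max(\min(a,b),\min(b,c),\min(a,c))$, so closure follows from the lattice case.
\item[] \emph{Limits.} Pointwise limits preserve the non-strict inequalities $\varphi\ge0$, $\varphi(x)\ge\varphi(y)$ and $|\varphi(x)-\varphi(y)|\le|x-y|$.
\item[] \emph{The payoff.} Finally $g=\max(K-x,0)$ is a maximum of two members of $\Mcl$. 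The function $0$ is clearly in $\Mcl$. The function $K-x$ has slope $-1$ but fails nonnegativity, so I argue directly instead: $g\ge0$, $g$ is nonincreasing, and $x+g(x)=\max(K,x)$ is nondecreasing.
\end{itemize}

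For (v), a direct substitution gives
\begin{equation*}
  (\A\varphi)(x)=\alpha\big[p(a-b\lambda x)+q(a-b\lambda^{-1}x)\big]=\alpha a-bx\,\alpha(p\lambda+q\lambda^{-1}),
\end{equation*}
using $p+q=1$. Applying \eqref{eq:gain} again turns the last factor into $1$, giving $\alpha a-bx$.

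There is no real obstacle here. The only points needing care are the use of \eqref{eq:gain} to get the exact Lipschitz constant in (iii) and the closure of $\Mcl$ under $\med$ in (iv), which I handle by reducing the median to $\max$/$\min$.
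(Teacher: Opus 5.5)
Your proof is correct and matches the paper's argument: positivity of the weights for (i)--(ii), the estimate combined with \eqref{eq:gain} for (iii), the $\max$/$\min$ formula for the median in (iv), and \eqref{eq:gain} again for (v). Your use of the characterization ``$x\mapsto x+\varphi(x)$ nondecreasing'' for convex combinations, lattice operations and limits only fills in details the paper leaves implicit. One cosmetic fix: delete the opening claim that $g$ is a maximum of two members of $\Mcl$, which you yourself retract since $K-x\notin\Mcl$, and keep only the direct check $x+g(x)=\max\{K,x\}$.
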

%%%%%%%%%%%%%%%%%%%%%%%%%%%%%%%%%%%%%%%%%%%%%%%
\begin{proof}
(i), (ii) are immediate.  (iii): for $x>y$,
$|(\A\varphi)(x)-(\A\varphi)(y)|\le \alpha\big[pL\lambda+qL\lambda^{-1}\big](x-y)
	=L(x-y)$ by \eqref{eq:gain}.  (iv): $\med\{a,b,c\}
	=\max\{\min\{a,b\},\min\{b,c\},\min\{c,a\}\}$, and both $\max$ and $\min$ of
	nonincreasing $1$-Lipschitz functions are nonincreasing and $1$-Lipschitz;
	$g(x)$ is nonnegative, nonincreasing and $1$-Lipschitz.  (v) is \eqref{eq:gain}
	again.
\end{proof}

Part (iii) of Lemma~\ref{lem:operator} is the reason why $1$ is the natural
	Lipschitz constant here: by \eqref{eq:gain} the operator $\A$ is neither a
	contraction nor an expansion.  
This is in contrast with the Russian option of
	Section~\ref{sec:russian}, where the analogous operator has gain $\alpha<1$.

%%%%%%%%%%%%%%%%%%%%%%%%%%%%%%%%%%%%%%%%%%%%%%%
\subsection{Dynamic programming and the median identity}
%%%%%%%%%%%%%%%%%%%%%%%%%%%%%%%%%%%%%%%%%%%%%%%
Let $n=N-\nu$ be the remaining time and let
	$V^{[m]}_n(x)$ denote the value of the option with $m$ rights, $n$ periods to
	maturity and current price $x$.  
By \eqref{eq:dpgeneral}--\eqref{eq:dpgeneral2},
	\begin{equation*}
  		V^{[m]}_0(x)=g(x)\ \ (m\ge1),\qquad V^{[0]}_n(x)\equiv0\ \ (n\ge0),
	\end{equation*}
	\begin{equation}\label{eq:dp2}
  		V^{[m]}_n(x)=\max\big\{\,g(x)+(\A V^{[m-1]}_{n-1})(x),\ (\A V^{[m]}_{n-1})(x)\,\big\},
  		\qquad n\ge1,\ m\ge1 .
	\end{equation}
Set
	\begin{equation}\label{eq:DeltaVf}
  		\Delta V^{[m]}_n(x):=V^{[m]}_n(x)-V^{[m-1]}_n(x),
 	 	\quad
  		f^{[m]}_n(x):=(\A \Delta V^{[m]}_{n-1})(x)\ \ (n\ge1),\quad f^{[m]}_0(x):=0 ,
	\end{equation}
	together with the convention
	\begin{equation}\label{eq:fzero}
  		f^{[0]}_n(x):\equiv+\infty .
	\end{equation}
Since $(\A V^{[m]}_{n-1})(x)=(\A V^{[m-1]}_{n-1})(x)+f^{[m]}_n(x)$, equation 
	\eqref{eq:dp2} can be rewritten in the form which we shall use exclusively:
\begin{equation}\label{eq:dp3}
  	V^{[m]}_n(x)
	=\max\underbrace{\big\{g(x),\ f^{[m]}_n(x)\big\}}_{\text{exercise or continue}}
		+(\A V^{[m-1]}_{n-1})(x) \qquad n\ge1,\ m\ge1 .
\end{equation}
Thus exercise is an optimal action whenever $g(x)\ge f^{[m]}_n(x)$, while
	continuation is optimal whenever $g(x)\le f^{[m]}_n(x)$.  
At equality both actions are optimal.  
For the put we shall break the economically irrelevant tie $g(x)=f_n^{[m]}(x)=0$ by choosing continuation.  
The function $f^{[m]}_n(x)$ is the continuation premium attached to the $m$-th right.  
Note that \eqref{eq:dp3} also holds for $n=0$ with the
	convention \eqref{eq:DeltaVf}, since $f^{[m]}_0(x)=0\le g(x)$ and
	$(\A V^{[m-1]}_{-1})(x):=0$.

%%%%%%%%%%%%%%%%%%%%%%%%%%%%%%%%%%%%%%%%%%%%%%%
\begin{lemma}%[Saturation]
\label{lem:saturation}
For every $n\ge0$ and $m\ge n+1$ we have $V^{[m]}_n(x)=V^{[n+1]}_n(x)$.  
Consequently
	$\Delta V^{[m]}_n(x)=0$ for $m\ge n+2$ and $f^{[m]}_n(x)=0$ for $m\ge n+1$.
\end{lemma}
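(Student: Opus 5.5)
We prove the first assertion by induction on $n$, using the dynamic programming equation \eqref{eq:dp2} together with the convention $V^{[0]}_n\equiv0$. The intuition is that with $n$ periods remaining there are only $n+1$ exercise dates $\{N-n,\dots,N\}$. Since at most one right may be exercised per date, any rights beyond $n+1$ are necessarily placed at $\partial$. One could argue directly from Theorem~\ref{thm:multiple}: every admissible vector in $\T^{[m]}_{N-n}$ has at most $n+1$ finite coordinates, so it coincides with an admissible vector for $n+1$ rights padded by $\partial$'s. Combined with Corollary~\ref{cor:monotone_m}, this gives equality of the value functions. I would nonetheless give the recursive proof, since it stays inside the Markov notation of this section.

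\emph{Base case $n=0$.} For every $m\ge1$ we have $V^{[m]}_0(x)=g(x)=V^{[1]}_0(x)$, so the claim holds for $m\ge1=n+1$.

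\emph{Induction step.} Assume $V^{[m]}_{n-1}=V^{[n]}_{n-1}$ for all $m\ge n$, and let $m\ge n+1$. Then both $m-1\ge n$ and $m\ge n$. Hence $V^{[m-1]}_{n-1}=V^{[m]}_{n-1}=V^{[n]}_{n-1}$, and by \eqref{eq:dp2}
\begin{align*}
V^{[m]}_n(x)=\max\big\{g(x)+(\A V^{[n]}_{n-1})(x),\ (\A V^{[n]}_{n-1})(x)\big\}.
\end{align*}
The right-hand side does not depend on $m\ge n+1$. Taking $m=n+1$ shows that it equals $V^{[n+1]}_n(x)$. Since $g\ge0$, it in fact equals $g(x)+(\A V^{[n]}_{n-1})(x)$, although we do not need this.

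\emph{Consequences.} For $m\ge n+2$, both $m$ and $m-1$ are at least $n+1$, so $\Delta V^{[m]}_n=V^{[n+1]}_n-V^{[n+1]}_n=0$.

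For $f^{[m]}_n$ with $m\ge n+1$ we split into two cases.
\begin{itemize}
\item If $n=0$, then $f^{[m]}_0=0$ by definition.
\item If $n\ge1$, then $m\ge(n-1)+2$, so $\Delta V^{[m]}_{n-1}\equiv0$ by the previous statement applied at level $n-1$. Hence $f^{[m]}_n=\A0=0$.
\end{itemize}

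There is no real obstacle here. The only point needing care is the index bookkeeping in the induction hypothesis: it must hold for all $m\ge n$ simultaneously, so that it can be applied to both $m$ and $m-1$. One must also check that the edge case $m-1=n$ is covered, which it is.
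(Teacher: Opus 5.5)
Your proof is correct and is the paper's argument: induction on $n$ via \eqref{eq:dp2}, using that both $m-1\ge n$ and $m\ge n$, so the right-hand side collapses to $g(x)+(\A V^{[n]}_{n-1})(x)$, which does not depend on $m$. Your explicit derivation of the two consequences, including the split $n=0$ versus $n\ge1$ for $f^{[m]}_n$, simply writes out what the paper calls immediate.
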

%%%%%%%%%%%%%%%%%%%%%%%%%%%%%%%%%%%%%%%%%%%%%%%
\begin{proof}
Induction on $n$.  For $n=0$, $V^{[m]}_0(x)=g(x)$ for all $m\ge1$.  Let $n\ge1$ and
	$m\ge n+1$.  
Then $m-1\ge n$ and $m\ge n$, so by the induction hypothesis, 
	$V^{[m-1]}_{n-1}(x)=V^{[n]}_{n-1}(x)=V^{[m]}_{n-1}(x)$; hence, by \eqref{eq:dp2},
	$V^{[m]}_n(x)=\max\{g(x)+(\A V^{[n]}_{n-1})(x),(\A V^{[n]}_{n-1})(x)\}=g(x)+(\A V^{[n]}_{n-1})(x)$,
	which does not depend on $m$.  
The two consequences are immediate.
\end{proof}

Lemma~\ref{lem:saturation} formalises the obvious fact that with $n$ periods to
	go there are only $n+1$ exercise dates left, so that more than $n+1$ rights are
	worthless; it will replace the informal argument usually given for the identity
	$x^{[m]*}_n=K$, $n\le m-1$.
%%%%%%%%%%%%%%%%%%%%%%%%%%%%%%%%%%%%%%%%%%%%%%%
\begin{lemma}%[Median identity]
\label{lem:median}
For every $n\ge0$ and $m\ge2$, if $f^{[m]}_n(x)\le f^{[m-1]}_n(x)$ then
	\begin{align}\label{eq:median}
  		\Delta V^{[m]}_n(x) = \med\big\{\,f^{[m]}_n(x),\ g(x),\ f^{[m-1]}_n(x)\,\big\} 
 			=\min\Big\{f^{[m-1]}_n(x),\ \max\big\{g(x),\ f^{[m]}_n(x)\big\}\Big\} .
	\end{align}
For $m=1$ one has $\Delta V^{[1]}_n(x)=V^{[1]}_n(x)=\max\{g(x),f^{[1]}_n(x)\}$, 
	which is \eqref{eq:median} with the convention \eqref{eq:fzero}.
\end{lemma}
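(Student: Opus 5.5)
Subtracting the dynamic programming identity \eqref{eq:dp3} for $m-1$ rights from the one for $m$ rights gives the result directly. No induction on $n$ is needed beyond what is built into the definitions \eqref{eq:DeltaVf}. Fix $n\ge1$, $m\ge2$ and $x$, and write $g=g(x)$, $a=f^{[m]}_n(x)$, $b=f^{[m-1]}_n(x)$. By \eqref{eq:dp3},
\begin{align*}
 V^{[m]}_n(x)&=\max\{g,a\}+(\A V^{[m-1]}_{n-1})(x),\\
 V^{[m-1]}_n(x)&=\max\{g,b\}+(\A V^{[m-2]}_{n-1})(x).
\end{align*}
Subtracting and using linearity of $\A$,
\[
 \Delta V^{[m]}_n(x)=\max\{g,a\}-\max\{g,b\}+(\A\Delta V^{[m-1]}_{n-1})(x)=\max\{g,a\}-\max\{g,b\}+b .
\]
Here $b=f^{[m-1]}_n(x)$ by \eqref{eq:DeltaVf}. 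When $m=2$, the term $V^{[m-2]}=V^{[0]}\equiv0$ causes no trouble: $\Delta V^{[1]}_{n-1}=V^{[1]}_{n-1}$, so the same identity holds.

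The remaining step is purely scalar. For reals with $a\le b$, I will show
\[
 \max\{g,a\}-\max\{g,b\}+b=\med\{a,g,b\}=\min\{b,\max\{g,a\}\},
\]
by checking three cases.
\begin{enumerate}[(1)]
\item If $g\le a\le b$, the left side is $a-b+b=a$, the median is $a$, and the minimum is $\min\{b,a\}=a$.
\item If $a\le g\le b$, the left side is $g-b+b=g$, which again matches both the median and the minimum.
\item If $a\le b\le g$, the left side is $g-g+b=b$, which again matches both.
\end{enumerate}
The second equality in \eqref{eq:median} is the standard representation of the median of ordered data, which can be taken from Appendix~\ref{app:operator}.

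Two boundary situations need separate treatment.
\begin{itemize}
\item For $n=0$: here $f^{[m]}_0=f^{[m-1]}_0=0$ and $V^{[m]}_0=V^{[m-1]}_0=g$ for $m\ge2$, so $\Delta V^{[m]}_0=0=\med\{0,g,0\}$. This is consistent with the remark after \eqref{eq:dp3} that the identity extends to $n=0$, provided the $m-1=0$ case is handled there.
\item For $m=1$: $V^{[0]}\equiv0$, so $\Delta V^{[1]}_n=V^{[1]}_n=\max\{g,f^{[1]}_n\}$ by \eqref{eq:dp3}, since $\A V^{[0]}_{n-1}=0$. This agrees with $\min\{+\infty,\max\{g,f^{[1]}_n\}\}$ under the convention \eqref{eq:fzero}.
\end{itemize}

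The only point requiring care is that \eqref{eq:dp3} is applied to $V^{[m-1]}$ with the correct continuation term, namely $\A V^{[m-2]}_{n-1}$. The difference of the two continuation terms is then exactly $f^{[m-1]}_n$. Once that bookkeeping is right, the hypothesis $f^{[m]}_n\le f^{[m-1]}_n$ is used only in the scalar case analysis; without it the median formula would fail.
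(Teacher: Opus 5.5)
Your proposal is correct and matches the paper's proof: subtract \eqref{eq:dp3} at levels $m$ and $m-1$, recognize the difference of the continuation terms as $f^{[m-1]}_n(x)$, and verify the median formula by the same three-case analysis under $f^{[m]}_n(x)\le f^{[m-1]}_n(x)$. Your separate treatment of $n=0$ just spells out what the paper obtains from its convention extending \eqref{eq:dp3} to $n=0$.
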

%%%%%%%%%%%%%%%%%%%%%%%%%%%%%%%%%%%%%%%%%%%%%%%
\begin{proof}
Apply \eqref{eq:dp3} at levels $m$ and $m-1$ and subtract:
	\begin{align*}
  		\Delta V^{[m]}_n(x)&=\max\{g(x),f^{[m]}_n(x)\}-\max\{g(x),f^{[m-1]}_n(x)\}
  		+\big(\A\big(V^{[m-1]}_{n-1}-V^{[m-2]}_{n-1}\big)\big)(x)\\
 		 &= \max\{g(x),f^{[m]}_n(x)\}-\max\{g(x),f^{[m-1]}_n(x)\}+f^{[m-1]}_n(x) .
	\end{align*}
	where the last equality is the definition \eqref{eq:DeltaVf} of $f^{[m-1]}_n(x)$.
Fix $x$ and distinguish three cases, using $f^{[m]}_n(x)\le f^{[m-1]}_n(x)$.
If $g(x)\ge f^{[m-1]}_n(x)$ the right-hand side equals
	$g(x)-g(x)+f^{[m-1]}_n(x)=f^{[m-1]}_n(x)$.  
If $f^{[m]}_n(x)\le g(x)\le f^{[m-1]}_n(x)$ it equals
	$g(x)-f^{[m-1]}_n(x)+f^{[m-1]}_n(x)=g(x)$.  
If $g(x)\le f^{[m]}_n(x)$ it equals
	$f^{[m]}_n(x)-f^{[m-1]}_n(x)+f^{[m-1]}_n(x)=f^{[m]}_n(x)$. 
In all three cases the value is the median of the three numbers.  
The case $m=1$ is \eqref{eq:dp3} with $(\A V^{[0]}_{n-1})(x)=0$.
\end{proof}

%%%%%%%%%%%%%%%%%%%%%%%%%%%%%%%%%%%%%%%%%%%%%%%
\subsection{Structural properties}
%%%%%%%%%%%%%%%%%%%%%%%%%%%%%%%%%%%%%%%%%%%%%%%
%%%%%%%%%%%%%%%%%%%%%%%%%%%%%%%%%%%%%%%%%%%%%%%
\begin{proposition}%[Marginal-value structure for the American put]
\label{prop:structure}
For all $n\ge0$ and $m\ge1$:
\begin{enumerate}[(i)]
	\item $\Delta V^{[m]}_n(\cdot)\in\Mcl$ and $f^{[m]}_n(\cdot)\in\Mcl$; in particular both are
      		nonnegative, nonincreasing and $1$-Lipschitz;
	\item $\Delta V^{[m+1]}_n(x)\le\Delta V^{[m]}_n(x)$ and
      		$f^{[m+1]}_n(x)\le f^{[m]}_n(x)$ (concavity in the number of rights);
	\item $\Delta V^{[m]}_n(x)\le\Delta V^{[m]}_{n+1}(x)$ and
      		$f^{[m]}_n(x)\le f^{[m]}_{n+1}(x)$ (monotonicity in the remaining time);
	\item $V^{[m]}_n(x)$ is convex, nonincreasing and $\mu$-Lipschitz with
      		$\mu:=\min\{m,n+1\}$, and $V^{[m]}_n(x)$ is nondecreasing in $n$ and in $m$.
\end{enumerate}
\end{proposition}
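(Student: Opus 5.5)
The plan is a single induction on $n$, proving (i)--(iii) together for all $m\ge1$ at each level. The engine is the median identity of Lemma~\ref{lem:median} together with the closure properties of $\Mcl$ in Lemma~\ref{lem:operator}.

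\emph{Base case $n=0$.} Here $\Delta V^{[1]}_0=g$, $\Delta V^{[m]}_0=0$ for $m\ge2$, and $f^{[m]}_0=0$. Items (i) and (ii) at $n=0$ are immediate from $g\in\Mcl$ and $g\ge0$.

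\emph{Inductive step for (i) and (ii).} Assume (i) and (ii) hold at level $n-1$ for every $m$.

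First, $f^{[m]}_n=\A\Delta V^{[m]}_{n-1}$. Lemma~\ref{lem:operator}(iii) then gives $f^{[m]}_n\in\Mcl$, and Lemma~\ref{lem:operator}(i) gives $f^{[m+1]}_n\le f^{[m]}_n$. This ordering is exactly the hypothesis Lemma~\ref{lem:median} needs, so
\[
\Delta V^{[m]}_n=\med\{f^{[m]}_n,\,g,\,f^{[m-1]}_n\}.
\]
For $m=1$ this reads $\max\{g,f^{[1]}_n\}$.

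Since $\Mcl$ is closed under $\med$ and $\max$ (Lemma~\ref{lem:operator}(iv)), we get $\Delta V^{[m]}_n\in\Mcl$.

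For the ordering in $m$, note that the median is monotone in each argument. Since $f^{[m+1]}_n\le f^{[m]}_n$ and $f^{[m]}_n\le f^{[m-1]}_n$,
\[
\Delta V^{[m+1]}_n=\med\{f^{[m+1]}_n,g,f^{[m]}_n\}\le\med\{f^{[m]}_n,g,f^{[m-1]}_n\}=\Delta V^{[m]}_n .
\]
The case $m=1$, with $f^{[0]}\equiv+\infty$, is handled the same way.

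\emph{Item (iii).} I would run this as a second induction on $n$, again using the median formula. The base case needs $\Delta V^{[m]}_0\le\Delta V^{[m]}_1$. For $m=1$ this is $g\le\max\{g,f^{[1]}_1\}$. For $m\ge2$ the left side is $0$ and the right side is nonnegative by (i).

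Suppose $\Delta V^{[m]}_{n-1}\le\Delta V^{[m]}_n$ for all $m$. Applying $\A$ and Lemma~\ref{lem:operator}(i) gives $f^{[m]}_n\le f^{[m]}_{n+1}$. Since the median is monotone in all three arguments, $\Delta V^{[m]}_n\le\Delta V^{[m]}_{n+1}$.

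\emph{Item (iv).} Write $V^{[m]}_n=\sum_{k\le m}\Delta V^{[k]}_n$.
\begin{itemize}
\item Monotonicity in $m$ follows from $\Delta V\ge0$, and monotonicity in $n$ follows from (iii).
\item $V^{[m]}_n$ is nonincreasing because each summand is.
\item By Lemma~\ref{lem:saturation}, only the terms with $k\le\min\{m,n+1\}$ are nonzero. Each is $1$-Lipschitz, so $V^{[m]}_n$ is $\mu$-Lipschitz.
\end{itemize}

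\emph{Convexity.} This must be done directly from \eqref{eq:dp2}, because the median of convex functions is not convex. I would induct on $n$ for all $m$ simultaneously. The function $g$ is convex, and by Lemma~\ref{lem:operator}(ii) $\A$ preserves convexity. Hence $g+\A V^{[m-1]}_{n-1}$ and $\A V^{[m]}_{n-1}$ are convex, and so is their maximum. Convexity is needed nowhere else, which matches the paper's claim that the marginal-value argument avoids convexity of $\Delta V$.

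\emph{Main obstacle.} The only delicate point is the ordering of the induction. Both Lemma~\ref{lem:median} and the monotonicity of the median at level $n$ require the ordering $f^{[m+1]}_n\le f^{[m]}_n$ for all $m$. That ordering must come from (ii) at level $n-1$, so (i) and (ii) have to be proved jointly in $n$ for all $m$ at once. The $m=1$ boundary must also be handled through the $+\infty$ convention \eqref{eq:fzero}.
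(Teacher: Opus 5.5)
Your proposal is correct and follows essentially the same route as the paper. That route is a joint induction on $n$ for (i)--(ii) driven by Lemma~\ref{lem:median} and the closure of $\Mcl$ under $\A$, $\max$ and $\med$; a second induction for (iii) via monotonicity of the median; and (iv) by summing marginal values with Lemma~\ref{lem:saturation}, plus a direct convexity induction from \eqref{eq:dp2}. The only cosmetic differences are that you handle the $m=1$ ordering through the convention $f^{[0]}\equiv+\infty$ rather than the paper's explicit min/max comparison, and you obtain monotonicity of $V^{[m]}_n$ in $x$ from the marginal decomposition rather than by induction on \eqref{eq:dp2}.
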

%%%%%%%%%%%%%%%%%%%%%%%%%%%%%%%%%%%%%%%%%%%%%%%
\begin{proof}
\emph{(i) and (ii).}  We use induction on $n$.  
For $n=0$, $\Delta V^{[1]}_0(x)=g(\cdot)\in\Mcl$, $
	\Delta V^{[m]}_0(x)=0$ for $m\ge2$, and
	$f^{[m]}_0(x)=0$ for all $m\ge1$, so both assertions hold.

Let $n\ge1$ and assume (i), (ii) at $n-1$.  
Then $f^{[m]}_n(x)=(\A \Delta V^{[m]}_{n-1})(\cdot)\in\Mcl$ by
	Lemma~\ref{lem:operator}(iii), and
	$f^{[m+1]}_n(x)\le f^{[m]}_n(x)$ by Lemma~\ref{lem:operator}(i).  
For $m=1$,
	$ \Delta V^{[1]}_n(x)=\max\{g(x),f^{[1]}_n(x)\}\in\Mcl .$
For $m\ge2$, the just established ordering
	$f^{[m]}_n(x)\le f^{[m-1]}_n(x)$ permits the use of Lemma~\ref{lem:median}, and
	\begin{align*}
   		\Delta V^{[m]}_n(x)=\med\{f^{[m]}_n(x),g(x),f^{[m-1]}_n(x)\}\in\Mcl .
	\end{align*}
It remains to propagate the ordering of the marginal values.  
For $m=1$,
	\begin{align*}
  		\Delta V^{[2]}_n(x)&=\min\{f^{[1]}_n(x),\max\{g(x),f^{[2]}_n(x)\}\} \\
  			&\le \max\{g(x),f^{[1]}_n(x)\}=\Delta V^{[1]}_n(x) .
	\end{align*}
For $m\ge2$, monotonicity of the interval projection in both endpoints gives
	\begin{align*}
 		 \Delta V^{[m+1]}_n(x)
		 	&=\min\big\{f^{[m]}_n(x),\max\{g(x),f^{[m+1]}_n(x)\}\big\} \\
  			&\le \min\big\{f^{[m-1]}_n(x),\max\{g(x),f^{[m]}_n(x)\}\big\}
  			=\Delta V^{[m]}_n(x) .
	\end{align*}
This proves (i) and (ii).

\emph{(iii).}  Again induct on $n$.  
At $n=0$,
$	\Delta V^{[1]}_0(x)=g(x)\le\max\{g(x),f^{[1]}_1(x)\}=\Delta V^{[1]}_1(x)$, while for
	$m\ge2$ the claim follows from nonnegativity. 
 Suppose $\Delta V^{[m]}_{n-1}(x)\le\Delta V^{[m]}_n(x)$ for every $m$.  
 Then
	\begin{align*}
  		f^{[m]}_n(x)=(\A \Delta V^{[m]}_{n-1})(x)
  			\le (\A \Delta V^{[m]}_n)(x)=f^{[m]}_{n+1}(x).
	\end{align*}
For $m=1$ this implies
	$\max\{g(x),f^{[1]}_n(x)\}\le\max\{g(x),f^{[1]}_{n+1}(x)\}$.
For $m\ge2$, the median is nondecreasing in each argument, hence
	\begin{align*}
  		\Delta V^{[m]}_n(x)&=\med\{f^{[m]}_n(x),g(x),f^{[m-1]}_n(x)\} \\
 			& \le\med\{f^{[m]}_{n+1}(x),g(x),f^{[m-1]}_{n+1}(x)\}
  				=\Delta V^{[m]}_{n+1}(x).
	\end{align*}

\emph{(iv).}  Convexity follows by induction from \eqref{eq:dp2}: $g(x)$ is convex,
	$\A$ preserves convexity, and the maximum of two convex functions is convex.
Monotonicity in $x$ is proved in the same way.  
Since $V^{[m]}_n(x)=\sum_{j=1}^{m}\Delta V^{[j]}_n(x)$ and each summand is
	$1$-Lipschitz by (i), $V^{[m]}_n(x)$ is $m$-Lipschitz.  
By Lemma~\ref{lem:saturation}, the summands with $j\ge n+2$ vanish, so the
	Lipschitz constant is at most $\mu=\min\{m,n+1\}$.  
Monotonicity in $n$ follows from (iii), and monotonicity in $m$ from the nonnegativity in (i).
\end{proof}

\begin{remark}\label{rem:noconvexity}
Proposition~\ref{prop:structure} does \emph{not} assert that
	$\Delta V^{[m]}_n(x)$ or $f^{[m]}_n(x)$ is convex, and indeed they are not; see
	Example~\ref{ex:nonconvex}.  
The point of the present formulation is that the
	interval structure of the exercise region, which for $m=1$ one obtains from
	convexity, is in fact a consequence of the weaker and stable property
	$(f^{[m]}_n(x))'\ge-1$.
\end{remark}

We next record the exact behaviour near $x=0$, which we shall need to locate the
	exercise boundary.  
Put
	\begin{equation*}
  		a_\mu:=\Big(\sum_{i=0}^{\mu-1}\alpha^{i}\Big)K,\qquad \mu\ge1,\qquad a_0:=0 .
	\end{equation*}
%%%%%%%%%%%%%%%%%%%%%%%%%%%%%%%%%%%%%%%%%%%%%%%
\begin{lemma}\label{lem:deep}
Let $n\ge0$, $m\ge1$ and $\mu=\min\{m,n+1\}$. 
For every $x\in(0,K\lambda^{-n})$,
	\begin{equation}\label{eq:deep}
  		V^{[m]}_n(x)=a_\mu-\mu x .
	\end{equation}
In particular $\lim_{x\downarrow0}V^{[m]}_n(x)=a_\mu$ and the slope of
	$V^{[m]}_n(x)$ deep inside the exercise region is exactly $-\mu$.  
Moreover, for $m\le n$ and $x\in(0,K\lambda^{-n})$,
	\begin{equation}\label{eq:deepf}
  		\Delta V^{[m]}_n(x)=\alpha^{m-1}K-x,\qquad f^{[m]}_n(x)=\alpha^{m}K-x ,
	\end{equation}
	and $f^{[m]}_n(x)\equiv0$ for $m\ge n+1$.
\end{lemma}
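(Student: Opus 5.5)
The plan is to prove a stronger statement about the marginal values by induction on $n$, and then sum. The statement is: for every $x\in(0,K\lambda^{-n})$,
\begin{equation*}
\Delta V^{[j]}_n(x)=\alpha^{j-1}K-x\quad (1\le j\le n+1),\qquad \Delta V^{[j]}_n(x)=0\quad (j\ge n+2).
\end{equation*}
Since $V^{[m]}_n=\sum_{j=1}^m\Delta V^{[j]}_n$, summing the nonzero terms gives $\sum_{j=1}^{\mu}(\alpha^{j-1}K-x)=a_\mu-\mu x$ with $\mu=\min\{m,n+1\}$, which is \eqref{eq:deep}. The first half of \eqref{eq:deepf}, for $m\le n$, is this claim with $j=m$. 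The vanishing of $\Delta V^{[j]}_n$ for $j\ge n+2$ and of $f^{[m]}_n$ for $m\ge n+1$ is already Lemma~\ref{lem:saturation}. The base case $n=0$ holds because $\Delta V^{[1]}_0=g=K-x$ on $(0,K)$ and $\Delta V^{[j]}_0=0$ for $j\ge2$.

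For the induction step, fix $n\ge1$ and $x\in(0,K\lambda^{-n})$. Both neighbours $\lambda x$ and $\lambda^{-1}x$ lie in $(0,K\lambda^{-(n-1)})$, an interval on which the hypothesis at $n-1$ makes $\Delta V^{[j]}_{n-1}$ affine, equal to $\alpha^{j-1}K-y$ for $j\le n$. Lemma~\ref{lem:operator}(v) then gives
\begin{equation*}
f^{[j]}_n(x)=(\A\Delta V^{[j]}_{n-1})(x)=\alpha^{j}K-x\quad (j\le n),
\end{equation*}
which is the second half of \eqref{eq:deepf}. For $j\ge n+1$ we have $f^{[j]}_n\equiv0$ by Lemma~\ref{lem:saturation}. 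Since $x<K$, we also have $g(x)=K-x$.

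Next I compute the marginal values from Lemma~\ref{lem:median}; its ordering hypothesis holds by Proposition~\ref{prop:structure}(ii).
\begin{itemize}
\item For $j=1$: $\Delta V^{[1]}_n=\max\{K-x,\alpha K-x\}=K-x$, since $\alpha\le1$.
\item For $2\le j\le n$: $\Delta V^{[j]}_n=\med\{\alpha^jK-x,\ K-x,\ \alpha^{j-1}K-x\}=\alpha^{j-1}K-x$, because $\alpha^jK\le\alpha^{j-1}K\le K$.
\item For $j=n+1$: $\Delta V^{[n+1]}_n=\med\{0,\ K-x,\ \alpha^nK-x\}$. This equals $\alpha^nK-x$ provided $0\le\alpha^nK-x\le K-x$.
\item For $j\ge n+2$: both $f$'s vanish, so the median is $0$.
\end{itemize}

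The one point needing care, and the main obstacle, is the nonnegativity $\alpha^nK-x\ge0$ in the case $j=n+1$. This is exactly where the no-arbitrage condition \eqref{eq:noarb} enters. From $1+r<\lambda$ we get $\lambda^{-1}<\alpha$, hence $x<K\lambda^{-n}\le K\alpha^n$. Everything else is bookkeeping. One should also confirm that the domain condition of Lemma~\ref{lem:operator}(v) is met at each step; it is, because the affine formula holds on the whole interval $(0,K\lambda^{-(n-1)})$ containing both neighbours.
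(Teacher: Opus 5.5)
Your proof is correct, but you run the induction on a different object than the paper does.

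\textbf{The paper's route.} The paper inducts directly on $V^{[m]}_n$ through \eqref{eq:dp2}. By the hypothesis at $n-1$ and Lemma~\ref{lem:operator}(v), the exercise value $K-x+(\A V^{[m-1]}_{n-1})(x)$ and the continuation value $(\A V^{[m]}_{n-1})(x)$ are both explicit affine functions:
\begin{itemize}
\item $a_m-mx$ versus $\alpha a_m-mx$ when $m\le n$;
\item $a_{n+1}-(n+1)x$ versus $\alpha a_n-nx$ when $m\ge n+1$.
\end{itemize}
The maximum is then read off using only $\alpha\le1$ and $K-x>0$, and \eqref{eq:deepf} follows by subtracting consecutive levels. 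That argument is self-contained. It needs neither the median identity, nor Proposition~\ref{prop:structure}, nor the no-arbitrage inequality $\lambda^{-1}<\alpha$.

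\textbf{Your route.} You induct on the marginal values $\Delta V^{[j]}_n$ via Lemma~\ref{lem:median} and sum at the end. This fits the paper's marginal-value viewpoint. It also gives, as a by-product, the case $\Delta V^{[n+1]}_n(x)=\alpha^nK-x$, which is not listed in \eqref{eq:deepf}. The cost is that you must import the ordering from Proposition~\ref{prop:structure}(ii); there is no circularity, since that proposition is proved independently.

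\textbf{The step you flag as the main obstacle.} This step is already settled by what you cite. At level $j=n+1$, the ordering hypothesis of Lemma~\ref{lem:median} is precisely $0=f^{[n+1]}_n(x)\le f^{[n]}_n(x)=\alpha^nK-x$. Equivalently, it follows from the nonnegativity in Proposition~\ref{prop:structure}(i). So your computation $x<K\lambda^{-n}\le K\alpha^n$ is valid but serves only as a redundant consistency check.
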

%%%%%%%%%%%%%%%%%%%%%%%%%%%%%%%%%%%%%%%%%%%%%%%
\begin{proof}
Induction on $n$.  For $n=0$ and $x<K$, $V^{[m]}_0(x)=K-x=a_1-1\cdot x$ and
	$\mu=1$.  Let $n\ge1$ and $x<K\lambda^{-n}$.  
Then $\lambda x<K\lambda^{-(n-1)}$ and $\lambda^{-1}x<K\lambda^{-(n-1)}$, 
	so the induction hypothesis applies at $\lambda^{\pm1}x$ and, 
	by Lemma~\ref{lem:operator}(v),
	$\A V^{[j]}_{n-1}(x)=\alpha a_{\mu_j}-\mu_j x$ with $\mu_j=\min\{j,n\}$.
Consider \eqref{eq:dp2}.  If $m\le n$ then $\mu_{m-1}=m-1$, $\mu_m=m$, and the
	exercise value is $K-x+\alpha a_{m-1}-(m-1)x=a_m-mx$ (using
	$K+\alpha a_{m-1}=a_m$) while the continuation value is $\alpha a_m-mx<a_m-mx$;
	hence \eqref{eq:deep} with $\mu=m$.  
If $m\ge n+1$ then
	$\mu_{m-1}=\mu_m=n$, the exercise value is $K-x+\alpha a_n-nx=a_{n+1}-(n+1)x$
	and the continuation value is $\alpha a_n-nx$, whose difference is $K-x>0$;
	hence \eqref{eq:deep} with $\mu=n+1$.  
Formulae \eqref{eq:deepf} follow by
	subtracting \eqref{eq:deep} at levels $m$ and $m-1$ and applying
	Lemma~\ref{lem:operator}(v); the last claim is Lemma~\ref{lem:saturation}.
\end{proof}

%%%%%%%%%%%%%%%%%%%%%%%%%%%%%%%%%%%%%%%%%%%%%%%
\subsection{The optimal exercise rule}
%%%%%%%%%%%%%%%%%%%%%%%%%%%%%%%%%%%%%%%%%%%%%%%
The following theorem is the principal exercise result for the American put.
%%%%%%%%%%%%%%%%%%%%%%%%%%%%%%%%%%%%%%%%%%%%%%%
\begin{theorem}%[American put: optimal threshold policy]
\label{thm:main}
Assume $r>0$.  For $n\ge0$ and $m\ge1$ define the threshold
	\begin{equation*}
  		x^{[m]*}_n:=\sup\big\{x\in(0,K]:\ g(x)\ge f^{[m]}_n(x)\big\}
	\end{equation*}
	and the exercise set
	\begin{equation}\label{eq:selectedD}
  		D^{[m]}_n:=\big\{x\in(0,K]:\ g(x)\ge f^{[m]}_n(x)\big\}.
	\end{equation}
Then:
\begin{enumerate}[(i)]
	\item the map $x\mapsto g(x)-f^{[m]}_n(x)$ is nonincreasing on $(0,K]$, is
      		positive near $0$ and nonpositive at $K$; consequently
     		 \begin{align*}
		 	x^{[m]*}_n\in(0,K],\qquad D^{[m]}_n=(0,x^{[m]*}_n]. 
		\end{align*}
     		 Exercise is an optimal action on $D^{[m]}_n$.  
		 For $x>K$ we select continuation; when $g(x)=f^{[m]}_n(x)=0$, 
		 this is merely a tie-breaking convention between two optimal actions.
	\item
      		\begin{align*}
        		0<x^{[m]*}_N\le x^{[m]*}_{N-1}\le\cdots
        		\le x^{[m]*}_1\le x^{[m]*}_0=K,
		\end{align*}
      		and $x^{[m]*}_n=K$ for all $n\le m-1$.
	\item $x^{[1]*}_n\le x^{[2]*}_n\le\dots\le x^{[m]*}_n$ for every $n$, and hence
      		$D^{[1]}_n\subseteq D^{[2]}_n\subseteq\dots\subseteq D^{[m]}_n$.
	\item Define recursively
      		\begin{align}
       		 	\sigma^{*}_m&=\min\big\{\nu\in\{0,\dots,N\}:\ 
          		 S_\nu\in D^{[m]}_{N-\nu}\big\},\label{eq:sigma1}\\
        		\sigma^{*}_{i}&=\min\big\{\nu\in\{0,\dots,N\}:\ 
          	 	\nu>\sigma^{*}_{i+1},\ S_\nu\in D^{[i]}_{N-\nu}\big\},
           		\qquad i=m-1,\dots,1,\label{eq:sigma2}
     		 \end{align}
     		 with $\min\emptyset:=\partial$.  
		 The finite entries of
      		$(\sigma_m^*,\dots,\sigma_1^*)$ are strictly increasing and all unused
      		rights are placed at $\partial$. 
		This policy is optimal, and
      		\begin{align*}
        		V^{[m]}_N(S_0)=\E\Big[\sum_{i:\,\sigma_i^*\le N}
          			\alpha^{\sigma_i^*}\big(K-S_{\sigma_i^*}\big)^{+}\Big]
        		=\sup_{\vec\tau\in\T^{[m]}_0}\E\Big[\sum_{i:\,\tau_i\le N}
          			\alpha^{\tau_i}\big(K-S_{\tau_i}\big)^{+}\Big].
		\end{align*}
\end{enumerate}
\end{theorem}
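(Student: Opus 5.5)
For (i), the plan is to work on $(0,K]$, where $g(x)=K-x$. By Proposition~\ref{prop:structure}(i) we have $f^{[m]}_n\in\Mcl$, so $x\mapsto x+f^{[m]}_n(x)$ is nondecreasing. Hence
\[
g(x)-f^{[m]}_n(x)=K-\big(x+f^{[m]}_n(x)\big)
\]
is nonincreasing on $(0,K]$. This single-crossing step replaces the convexity argument.

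At $x=K$ we get $g(K)-f^{[m]}_n(K)=-f^{[m]}_n(K)\le0$, since $f^{[m]}_n\ge0$. Near $0$ we use Lemma~\ref{lem:deep}. If $m\le n$, then $f^{[m]}_n(x)=\alpha^mK-x$ on $(0,K\lambda^{-n})$, so $g-f^{[m]}_n=(1-\alpha^m)K>0$; this is where $r>0$ (i.e.\ $\alpha<1$) is used. If $m\ge n+1$, then $f^{[m]}_n\equiv0$ and $g>0$ there.

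Monotonicity then gives that $D^{[m]}_n$ is an initial interval of $(0,K]$. By continuity of $f^{[m]}_n$ (it is Lipschitz), this interval is closed at its right endpoint, so $D^{[m]}_n=(0,x^{[m]*}_n]$ with $x^{[m]*}_n\in(0,K]$. That exercise is optimal on $D^{[m]}_n$ is read off \eqref{eq:dp3}. For $x>K$ we have $g=0\le f^{[m]}_n$, so continuation is optimal, and any tie is the stated convention.

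For (ii) and (iii), the plan is to compare exercise sets via the orderings of the $f$'s, using Proposition~\ref{prop:structure}(iii) and (ii) respectively. Since $f^{[m]}_n\le f^{[m]}_{n+1}$, the condition $g\ge f^{[m]}_{n+1}$ implies $g\ge f^{[m]}_n$. So $D^{[m]}_{n+1}\subseteq D^{[m]}_n$, and the thresholds are nonincreasing in $n$. Likewise $f^{[m+1]}_n\le f^{[m]}_n$ gives $D^{[m]}_n\subseteq D^{[m+1]}_n$, and hence the ordering of thresholds in $m$.

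For $x^{[m]*}_0=K$ we use $f^{[m]}_0=0$. More generally, for $n\le m-1$ we have $m\ge n+1$, so $f^{[m]}_n\equiv0$ by Lemma~\ref{lem:saturation}. Then $g\ge f^{[m]}_n$ on all of $(0,K]$, which gives $x^{[m]*}_n=K$. Positivity of $x^{[m]*}_N$ comes from (i).

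For (iv), the plan is to invoke the Markovian specialisation of Theorem~\ref{thm:multiple}, namely \eqref{eq:calendar} with $Z=S$ and reward $\alpha^\nu g(S_\nu)$. Here we must check that the sets $D^{[i]}_n$ of Section~\ref{sec:markov} agree with \eqref{eq:selectedD} up to ties. By \eqref{eq:dp3}, the Section~\ref{sec:markov} condition
\[
g+\A V^{[i-1]}_{n-1}\ge \A V^{[i]}_{n-1}
\]
is exactly $g\ge f^{[i]}_n$. The only discrepancy is at points $x>K$ with $g=f^{[i]}_n=0$, where we choose continuation. Choosing continuation there is still optimal: at such a point $V^{[i]}_n=X^{[i]}$ and also $V^{[i]}_n$ equals the continuation value. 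Stopping at the first time $V=X$ only requires some $\nu$ with $V=X$ in the stopping set. Any stopping time that stops only where $V^{[i]}=X^{[i]}$ and keeps the stopped Snell envelope a martingale is optimal, and continuing at a tie preserves the martingale property.

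Also, for $n=0$ we have $D_0=(0,K]$ rather than $E$. At maturity, exercise with $x>K$ yields $0$, so the value is unchanged; formally $f_0=0=g$ there, again a tie. The main obstacle is this tie-breaking verification. I would handle it by rerunning Step~2 of Theorem~\ref{thm:multiple} with the modified times, checking at each level that
\[
V^{[i]}_\sigma=\E\big[X^{[i]}_{\tau}\mid\F_\sigma\big]
\]
for $\tau$ the first entry into the selected set. This uses the martingale property of $V^{[i]}$ on the continuation set, where $V^{[i]}=\A V^{[i]}$ holds including at ties. Strict increase of the finite entries and placement of unused rights at $\partial$ follow from the construction $\nu>\sigma^*_{i+1}$.
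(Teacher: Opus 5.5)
Your proposal is correct and follows the paper's proof essentially step for step. In (i) you get single crossing from the $1$-Lipschitz property of $f^{[m]}_n$ (your ``$x+f^{[m]}_n(x)$ nondecreasing'' is the same fact) and positivity near $0$ from Lemma~\ref{lem:deep} and saturation; (ii)--(iii) come from the orderings in Proposition~\ref{prop:structure}, and (iv) from Theorem~\ref{thm:multiple} with an optimal tie-breaking selector. Your explicit use of continuity for closedness at $x^{[m]*}_n$, and your martingale check that continuing at zero-payoff ties (including $n=0$, $x>K$) keeps the policy optimal, simply spell out points the paper's proof leaves implicit.
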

%%%%%%%%%%%%%%%%%%%%%%%%%%%%%%%%%%%%%%%%%%%%%%%
\begin{proof}
(i)  On $(0,K]$, $g(x)=K-x$.  For $0<y<x\le K$,
	\begin{align*}
   		g(x)-f^{[m]}_n(x)-g(y)+f^{[m]}_n(y)
   		=-(x-y)-\big(f^{[m]}_n(x)-f^{[m]}_n(y)\big)\le0,
	\end{align*}
	because $f^{[m]}_n(x)$ is $1$-Lipschitz.  
Thus $g(x)-f^{[m]}_n(x)$ is nonincreasing.
If $m\le n$, Lemma~\ref{lem:deep} gives
	$g(x)-f^{[m]}_n(x)=(1-\alpha^m)K>0$ for $x<K\lambda^{-n}$; if
	$m\ge n+1$, Lemma~\ref{lem:saturation} gives $f^{[m]}_n(x)\equiv0$, so the same
	difference is positive on $(0,K)$.  
At $K$ it equals $-f^{[m]}_n(K)\le0$.
Hence \eqref{eq:selectedD} is exactly $(0,x_n^{[m]*}]$.

For $x>K$, $g(x)=0$.  
If $f^{[m]}_n(x)>0$, continuation is strictly better;
	if $f^{[m]}_n(x)=0$, exercise and continuation have the same value.
Our selected policy chooses continuation in the latter case.
Thus zero-payoff tie points are excluded from the exercise region.

(ii) Proposition~\ref{prop:structure}(iii) gives
	$f^{[m]}_{n+1}(x)\ge f^{[m]}_n(x)$, hence
	$D^{[m]}_{n+1}\subseteq D^{[m]}_n$ and
	$x^{[m]*}_{n+1}\le x^{[m]*}_n$.  
Since $f^{[m]}_0(x)=0$,$x^{[m]*}_0=K$.  If $n\le m-1$, 
	Lemma~\ref{lem:saturation} gives
	$f^{[m]}_n(x)\equiv0$, and therefore $x^{[m]*}_n=K$.

(iii) Proposition~\ref{prop:structure}(ii) gives
	$f^{[m+1]}_n(x)\le f^{[m]}_n(x)$, so
	$D^{[m]}_n\subseteq D^{[m+1]}_n$.

(iv) At every state the rule \eqref{eq:sigma1}--\eqref{eq:sigma2} selects an
	action attaining the maximum in \eqref{eq:dp2}: it exercises on
	$D^{[i]}_{N-\nu}$, continues when continuation is strictly better, and also
	continues at the zero-payoff ties described in (i).  
Backward induction, equivalently Theorem~\ref{thm:multiple} with this optimal tie-breaking selector,
	therefore yields optimality of the recursively defined policy.  
The displayed value formula is just the corresponding discounted payoff, 
	with non-exercise-time entries contributing zero.
\end{proof}

The next two figures combine a schematic presentation with curves computed
	from the exact dynamic programming equation.  
We use $K=1$, $\lambda=1.2$ and $r=0.05$.  
They also display the local feature from
	Lemma~\ref{lem:deep}: for the $m=2$ continuation premium,
	$f_n^{[2]}(x)=\alpha^2K-x$ sufficiently close to zero, so the slope there is $-1$.
%%%%%%%%%%%%%%%%%%%%%%%%%%%%%%%%%%%%%%%%%%%%%%%
\begin{figure}[htbp]
\centering
\begin{minipage}{0.48\textwidth}
  \centering
  	\includegraphics[width=\linewidth]{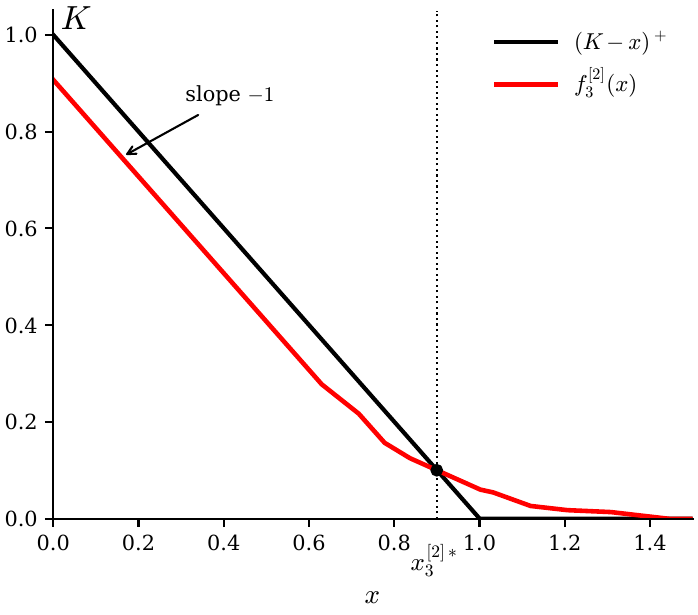}
 	 \caption{The single crossing of $g(x)$ and $f^{[2]}_3(x)$.  
	 	The vertical dotted line is $x^{[2]*}_3$.  
		The curve is obtained from the lattice recursion and has slope $-1$ near the origin.}
  \label{fig:fm-crossing}
\end{minipage}\hfill
\begin{minipage}{0.48\textwidth}
  \centering
  	\includegraphics[width=\linewidth]{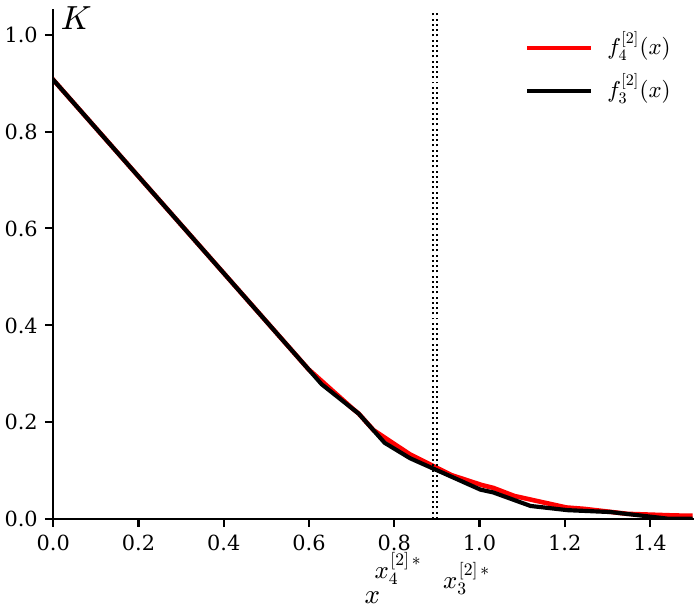}
 	 \caption{The comparison $f^{[2]}_4(x)\ge f^{[2]}_3(x)$, illustrating
 		 $x^{[2]*}_4\le x^{[2]*}_3$.  
		 The two continuation premia coincide with the same slope $-1$ affine branch near the origin.}
  \label{fig:fm-time}
\end{minipage}
\end{figure}
%%%%%%%%%%%%%%%%%%%%%%%%%%%%%%%%%%%%%%%%%%%%%%%
\begin{corollary}\label{cor:nested}
With $m$ rights the finite exercise times generated by
	Theorem~\ref{thm:main}(iv) are strictly increasing; any unused rights are placed
	at $\partial$. 
 By Theorem~\ref{thm:main}(iii), the boundary is lower when fewer
	rights remain.  
\end{corollary}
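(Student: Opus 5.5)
The corollary makes two claims, and I will handle them separately: the ordering of the exercise times, and the ordering of the boundaries in the number of remaining rights.

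\emph{Ordering of exercise times.} I will read this off the recursive definition \eqref{eq:sigma1}--\eqref{eq:sigma2}. By construction, $\sigma^*_i$ is a minimum over dates $\nu>\sigma^*_{i+1}$. Hence whenever both $\sigma^*_{i+1}$ and $\sigma^*_i$ are finite, $\sigma^*_i>\sigma^*_{i+1}$ strictly. If $\sigma^*_{i+1}=\partial$, the constraint set $\{\nu\le N:\nu>\partial\}$ is empty. Then $\sigma^*_i=\partial$ by the convention $\min\emptyset=\partial$, and inductively $\sigma^*_j=\partial$ for every $j\le i$. So the finite entries form an initial block $\sigma^*_m<\sigma^*_{m-1}<\cdots<\sigma^*_{m-\ell+1}\le N$, and the remaining coordinates equal $\partial$. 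This is exactly membership in $\T^{[m]}_0$ (Definition~\ref{def:admissible}), as already noted in Theorem~\ref{thm:main}(iv) and in the final step of the proof of Theorem~\ref{thm:multiple}.

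\emph{Ordering of boundaries.} I will invoke Theorem~\ref{thm:main}(iii): $x^{[1]*}_n\le\cdots\le x^{[m]*}_n$ for every $n$. The underlying reason is Proposition~\ref{prop:structure}(ii), $f^{[i+1]}_n\le f^{[i]}_n$, which gives $D^{[i]}_n\subseteq D^{[i+1]}_n$. Along the policy, the $i$-th exercise (counting down from $m$) uses the region $D^{[i]}_{N-\nu}=(0,x^{[i]*}_{N-\nu}]$, by Theorem~\ref{thm:main}(i). After each exercise the index drops by one, so the relevant threshold at a fixed remaining time is lower. The time index also decreases as $\nu$ grows, and by Theorem~\ref{thm:main}(ii) thresholds rise as $n$ falls. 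I will therefore phrase the claim pointwise in $n$, which avoids any confusion between these two competing effects.

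No step is a real obstacle. The only care needed is the successor convention for $\partial$, which ensures that unused rights cascade to $\partial$ rather than producing out-of-order finite entries.
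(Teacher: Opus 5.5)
Your proposal is correct and matches the paper. The paper gives no separate argument: it treats the corollary as a direct restatement of Theorem~\ref{thm:main}(iv), where the constraint $\nu>\sigma^*_{i+1}$ together with $\nu<\partial$ for all $\nu\le N$ forces strict increase and sends all later rights to $\partial$, and of Theorem~\ref{thm:main}(iii), where $f^{[i+1]}_n\le f^{[i]}_n$ gives $D^{[i]}_n\subseteq D^{[i+1]}_n$. Your choice to read ``the boundary is lower when fewer rights remain'' pointwise in the remaining time $n$ is exactly the content of Theorem~\ref{thm:main}(iii), and it correctly avoids mixing this with the time-monotonicity of Theorem~\ref{thm:main}(ii).
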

%%%%%%%%%%%%%%%%%%%%%%%%%%%%%%%%%%%%%%%%%%%%%%%
\begin{example}[$\Delta V^{[m]}_n(x)$ need not be convex]\label{ex:nonconvex}
Take $n=1$, $m=2$.  
Since $f^{[2]}_1(x)=0$ by Lemma~\ref{lem:saturation},
	Lemma~\ref{lem:median} gives
	\begin{align*}
  		 \Delta V^{[2]}_1(x)=\med\{0,\,g(x),\,f^{[1]}_1(x)\}=\min\big\{g(x),\ f^{[1]}_1(x)\big\},
   		\qquad f^{[1]}_1(x)=(\A g)(x) .
	\end{align*}
Explicitly, by \eqref{eq:operator} and \eqref{eq:gain},
	\begin{align*}
  	(\A g)(x)=
  		\begin{cases}
    			\alpha K-x, & 0<x\le K\lambda^{-1},\\[2pt]
    			\alpha q\,(K-\lambda^{-1}x), & K\lambda^{-1}\le x\le \lambda K,\\[2pt]
    			0,& x\ge\lambda K .
  		\end{cases}
	\end{align*}
Hence the slope of $\Delta V^{[2]}_1(x)=\min\{g(x),(\A g)(x)\}$ equals $-1$ on
	$(0,K\lambda^{-1})$, then $-\alpha q\lambda^{-1}$ on $(K\lambda^{-1},b)$, then
	$-1$ again on $(b,K)$, where
	\begin{equation*}
 	 	b=\frac{1-\alpha q}{1-\alpha q\lambda^{-1}}\,K
	\end{equation*}
	is the solution of $K-x=\alpha q(K-\lambda^{-1}x)$. 
The sequence of slopes $-1,\,-\alpha q\lambda^{-1},\,-1$ is not nondecreasing, 
 	so $\Delta V^{[2]}_1(x)$ is {not} convex.  
For $\lambda=1.2$, $r=0.05$, $K=1$ one finds
	$p=0.5909$, $q=0.4091$, $\alpha q=0.3896$, $b=0.9038$, 
	and the three slopes are $-1$, $-0.3247$, $-1$.
\end{example}

The same computation shows that $f^{[m]}_n(x)=(\A \Delta V^{[m]}_{n-1})(x)$ is in general
	not convex either, which is why the uniqueness of the exercise boundary cannot
	be obtained from a convexity/concavity single-crossing argument, and is obtained
	instead from the Lipschitz bound in the proof of Theorem~\ref{thm:main}(i).

%%%%%%%%%%%%%%%%%%%%%%%%%%%%%%%%%%%%%%%%%%%%%%%
\subsection{The free boundary: one-sided derivatives and the failure of smooth fit}
\label{sec:freeboundary}
%%%%%%%%%%%%%%%%%%%%%%%%%%%%%%%%%%%%%%%%%%%%%%%
In continuous time the value function of an American put is $C^1$ across the
	exercise boundary; this is the classical smooth fit (or smooth pasting) principle.  
On a fixed lattice this is \emph{false}.  
We make this precise, since the point is easy to get wrong and since the correct statement is a
	genuine structural difference between the discrete and the continuous model.
%%%%%%%%%%%%%%%%%%%%%%%%%%%%%%%%%%%%%%%%%%%%%%%
\begin{lemma}\label{lem:pwaffine}
For every $n\ge0$ and $m\ge1$ the function $V^{[m]}_n(x)$ is continuous, convex and
	piecewise affine with finitely many breakpoints on $(0,\infty)$.  
Consequently the one-sided derivatives $\partial_{-}V^{[m]}_n(x)$ and
	$\partial_{+}V^{[m]}_n(x)$ exist everywhere and
	$\partial_{-}V^{[m]}_n(x)\le\partial_{+}V^{[m]}_n(x)$.
\end{lemma}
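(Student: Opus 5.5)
We argue by induction on $n$, proving simultaneously for all $m\ge0$ that $V^{[m]}_n$ is continuous and piecewise affine on $(0,\infty)$ with finitely many breakpoints. Convexity is already available from Proposition~\ref{prop:structure}(iv), so only the piecewise affine structure and the finiteness of the breakpoint set need proof. For $n=0$ we have $V^{[m]}_0=g=(K-x)^+$ for $m\ge1$, with the single breakpoint $K$, and $V^{[0]}_0\equiv0$.

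For the inductive step we use the recursion \eqref{eq:dp2} and check that each operation in it preserves the class $\mathcal{P}$ of continuous piecewise affine functions with finitely many breakpoints.

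The operator $\A$ preserves $\mathcal{P}$. If $\varphi\in\mathcal{P}$ has breakpoint set $B$, then $x\mapsto\varphi(\lambda x)$ and $x\mapsto\varphi(\lambda^{-1}x)$ are continuous and piecewise affine with breakpoints $\lambda^{-1}B$ and $\lambda B$. A linear combination of these two is affine off the finite set $\lambda^{-1}B\cup\lambda B$. Thus $\A\varphi\in\mathcal{P}$.

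Sums preserve $\mathcal{P}$. Indeed, $g+\A V^{[m-1]}_{n-1}$ is affine off the union of the two breakpoint sets.

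The maximum preserves $\mathcal{P}$, and this is the only step needing care. Let $h_1,h_2\in\mathcal{P}$ and consider $\max\{h_1,h_2\}$. Refine the partition of $(0,\infty)$ by the union of both breakpoint sets. On each resulting interval $h_1-h_2$ is affine, so either it vanishes identically there or it has at most one zero. Hence $\max\{h_1,h_2\}$ is affine on each subinterval, with at most one new breakpoint per interval. The number of intervals is finite, so finitely many breakpoints are added. Continuity is preserved by max.

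Therefore $V^{[m]}_n\in\mathcal{P}$. In fact an explicit bound on the number of breakpoints could be tracked, but finiteness suffices.

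Finally, on each of the finitely many affine pieces the function has a constant slope. At a breakpoint $x$ the one-sided derivatives are therefore the slopes of the adjacent pieces, and at any other point both equal the local slope; so they exist everywhere. By convexity, slopes of successive pieces are nondecreasing, which gives $\partial_-V^{[m]}_n(x)\le\partial_+V^{[m]}_n(x)$. This is also the standard monotonicity of difference quotients of a convex function.

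The main (mild) obstacle is the max step: we must ensure that new breakpoints arise only as isolated crossing points, and do not accumulate, in each cell of the common refinement. The observation that $h_1-h_2$ is affine on each cell handles this.
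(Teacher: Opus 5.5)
Your proposal is correct and follows essentially the paper's own proof. Both run an induction through \eqref{eq:dp2}, using that $\A$ maps continuous piecewise affine functions with finitely many breakpoints into themselves (breakpoints moved by $\lambda^{\pm1}$), that this class is closed under sums and maxima, and that convexity comes from Proposition~\ref{prop:structure}(iv); your argument for the max step (at most one new crossing per cell of the common refinement) simply spells out what the paper asserts in one line.
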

%%%%%%%%%%%%%%%%%%%%%%%%%%%%%%%%%%%%%%%%%%%%%%%
\begin{proof}
$g(x)$ is continuous, convex and piecewise affine with one breakpoint, and $\A$
	maps this class into itself (a breakpoint of $\A\varphi$ lies at
	$\lambda^{\pm1}$ times a breakpoint of $\varphi$).  
The class is stable under maxima and sums, so \eqref{eq:dp2} propagates it.  
Convexity is Proposition~\ref{prop:structure}(iv), and a convex function has
	$\partial_{-}\le\partial_{+}$.
\end{proof}
%%%%%%%%%%%%%%%%%%%%%%%%%%%%%%%%%%%%%%%%%%%%%%%
\begin{proposition}\label{prop:onesided}
Let $r>0$, $n\ge1$, $m\ge1$, $\mu=\min\{m,n+1\}$ and $b:=x^{[m]*}_n$.  
If $b<K$ then
	\begin{align*}
   		-\mu\ \le\ \partial_{-}V^{[m]}_n(b)\ \le\ -1
   			\qquad\text{and}\qquad
  		 \partial_{-}V^{[m]}_n(b)\ \le\ \partial_{+}V^{[m]}_n(b)\ \le\ 0 .
	\end{align*}
Moreover $V^{[m]}_n(x)=a_\mu-\mu x$ for $x<K\lambda^{-n}$, so that the slope
	deep inside the exercise region equals $-\mu$ exactly.
\end{proposition}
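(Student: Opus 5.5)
The bounds will come from combining three facts already proved: the convexity and Lipschitz control of Proposition~\ref{prop:structure}(iv), the explicit affine form near the origin from Lemma~\ref{lem:deep}, and the decomposition \eqref{eq:dp3} evaluated just to the left of the boundary. The last assertion, $V^{[m]}_n(x)=a_\mu-\mu x$ for $x<K\lambda^{-n}$, is exactly \eqref{eq:deep}, so nothing new is needed there. By Lemma~\ref{lem:pwaffine}, $V^{[m]}_n$ is convex and piecewise affine, so both one-sided derivatives exist at $b$ and $\partial_-V^{[m]}_n(b)\le\partial_+V^{[m]}_n(b)$.

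\emph{Bound $\partial_+V^{[m]}_n(b)\le0$.} Since $V^{[m]}_n$ is nonincreasing by Proposition~\ref{prop:structure}(iv), every one-sided difference quotient is nonpositive, and hence so is the right derivative.

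\emph{Bound $\partial_-V^{[m]}_n(b)\ge-\mu$.} I would again use Proposition~\ref{prop:structure}(iv): $V^{[m]}_n$ is $\mu$-Lipschitz, so left difference quotients are at least $-\mu$. An alternative route uses convexity: slopes are nondecreasing in $x$, and the slope on $(0,K\lambda^{-n})$ is $-\mu$ by Lemma~\ref{lem:deep}.

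\emph{Bound $\partial_-V^{[m]}_n(b)\le-1$.} This is the substantive step. By Theorem~\ref{thm:main}(i), $D^{[m]}_n=(0,b]$, so $g\ge f^{[m]}_n$ on $(0,b]$. Hence for $x\le b$, \eqref{eq:dp3} gives
\begin{align*}
V^{[m]}_n(x)=K-x+(\A V^{[m-1]}_{n-1})(x)=K-x+\sum_{j=1}^{m-1}(\A\Delta V^{[j]}_{n-1})(x).
\end{align*}
Each $\Delta V^{[j]}_{n-1}$ is nonincreasing by Proposition~\ref{prop:structure}(i), and $\A$ preserves this by Lemma~\ref{lem:operator}(ii). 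So $\A V^{[m-1]}_{n-1}$ is nonincreasing and has nonpositive left derivative at $b$. Adding the slope $-1$ of $K-x$ gives $\partial_-V^{[m]}_n(b)\le-1$.

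One point must be checked: this representation is valid on a whole left neighbourhood $(b-\delta,b]$, not only at the single point $b$. That holds because $D^{[m]}_n$ is an interval containing $(0,b]$ and $b>0$ (Theorem~\ref{thm:main}(i)).

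The hypothesis $b<K$ is not needed for the inequalities themselves. It only guarantees that $g(x)=K-x$ near $b$, which the computation uses; on $(0,K]$ this holds anyway.

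I expect the main obstacle to be this $-1$ upper bound on the left derivative, specifically ensuring that the exercise representation holds on a left neighbourhood of $b$. That is exactly what the interval structure from Theorem~\ref{thm:main}(i) supplies.
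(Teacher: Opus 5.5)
Your proposal is correct and follows the paper's proof essentially step for step. The $\mu$-Lipschitz bound and monotonicity from Proposition~\ref{prop:structure}(iv) give $\partial_-V^{[m]}_n(b)\ge-\mu$ and $\partial_+V^{[m]}_n(b)\le0$. The representation $V^{[m]}_n=g+\A V^{[m-1]}_{n-1}$ on $D^{[m]}_n=(0,b]$ gives $\partial_-V^{[m]}_n(b)\le-1$, and Lemmas~\ref{lem:pwaffine} and~\ref{lem:deep} supply the rest. The only difference is cosmetic: you get monotonicity of $\A V^{[m-1]}_{n-1}$ from the sum of marginal values rather than directly from Proposition~\ref{prop:structure}(iv). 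Your remark that the hypothesis $b<K$ is never actually used is also correct.
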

%%%%%%%%%%%%%%%%%%%%%%%%%%%%%%%%%%%%%%%%%%%%%%%
\begin{proof}
The Lipschitz bound of Proposition~\ref{prop:structure}(iv) gives
	$\partial_{-}V^{[m]}_n(x)\ge-\mu$, and monotonicity gives
	$\partial_{+}V^{[m]}_n(x)\le0$.  
On $(0,b]$ we have $V^{[m]}_n(x)=g(x)+(\A V^{[m-1]}_{n-1})(x)$
	by \eqref{eq:dp3}, hence
	$\partial_{-}V^{[m]}_n(b)=-1+\partial_{-}(\A V^{[m-1]}_{n-1})(b)\le-1$,
	because $(\A V^{[m-1]}_{n-1})(x)$ is nonincreasing.  
The inequality $\partial_{-}\le\partial_{+}$ is Lemma~\ref{lem:pwaffine}, 
	and the last claim is Lemma~\ref{lem:deep}.
\end{proof}
%%%%%%%%%%%%%%%%%%%%%%%%%%%%%%%%%%%%%%%%%%%%%%%
\begin{proposition}%Smooth fit fails]
\label{ex:nosmoothfit}
Let $r>0$ and $\lambda>1+r$.  For $n=1$, $m=1$ the exercise boundary is
	\begin{align*}
   		x^{[1]*}_1=b=\frac{1-\alpha q}{1-\alpha q\lambda^{-1}}\,K
   		\ \in\ \big(K\lambda^{-1},\,K\big),
	\end{align*}
	and
	\begin{align*}
  	 	\partial_{-}V^{[1]}_1(b)=-1,
   		\qquad
   		\partial_{+}V^{[1]}_1(b)=-\frac{\alpha q}{\lambda}\ \in\ (-1,0) .
	\end{align*}
In particular $V^{[1]}_1(x)$ is not differentiable at the exercise boundary.
\end{proposition}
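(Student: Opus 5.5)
The plan is to work entirely with the explicit formula from Example~\ref{ex:nonconvex}. For $n=1$, $m=1$, equation \eqref{eq:dp3} gives
\[
V^{[1]}_1(x)=\max\{g(x),f^{[1]}_1(x)\},\qquad f^{[1]}_1=\A g .
\]
The example computes $\A g$ piecewise: it equals $\alpha K-x$ on $(0,K\lambda^{-1}]$, equals $\alpha q(K-\lambda^{-1}x)$ on $[K\lambda^{-1},\lambda K]$, and vanishes beyond $\lambda K$. Everything then reduces to comparing the two affine functions $K-x$ and $\alpha q(K-\lambda^{-1}x)$ on $(K\lambda^{-1},K)$.

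The first step is to locate $b$. On $(0,K\lambda^{-1}]$ we have $g-\A g=(1-\alpha)K>0$, since $r>0$. By Theorem~\ref{thm:main}(i), $g-f^{[1]}_1$ is nonincreasing, so the threshold is the unique zero of $K-x-\alpha q(K-\lambda^{-1}x)$. This zero is the displayed $b$, provided it lies in $(K\lambda^{-1},K)$.

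The second step, and the only genuine check, is that this localization holds.
\begin{itemize}
\item For $b<K$: this is equivalent to $1-\alpha q<1-\alpha q\lambda^{-1}$, i.e.\ $\lambda^{-1}<1$, which is true.
\item For $b>K\lambda^{-1}$: after clearing denominators, this is $\lambda(1-\alpha q)>1-\alpha q\lambda^{-1}$, i.e.\ $\lambda-1>\alpha q(\lambda-\lambda^{-1})$. Using \eqref{eq:pq}, $\alpha q(\lambda-\lambda^{-1})=\alpha\lambda-1$, so the inequality becomes $\lambda>\alpha\lambda$, which holds because $\alpha<1$.
\end{itemize}
Alternatively, $b>K\lambda^{-1}$ follows at once from the positivity of $g-\A g$ at $K\lambda^{-1}$ together with monotonicity. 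I would note where the hypothesis $\lambda>1+r$ enters: it is already implied by \eqref{eq:noarb}, and it ensures $q>0$, so the continuation slope is nonzero.

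The third step reads off the one-sided derivatives. Because $g-\A g$ changes sign strictly at $b$, on a small interval to the left of $b$ we have $V^{[1]}_1=g=K-x$, which gives $\partial_-V^{[1]}_1(b)=-1$. On a small interval to the right we have $V^{[1]}_1=\alpha q(K-\lambda^{-1}x)$, which gives $\partial_+V^{[1]}_1(b)=-\alpha q/\lambda$. This value lies in $(-1,0)$ because $0<\alpha q<1<\lambda$. Since the two one-sided derivatives differ, $V^{[1]}_1$ is not differentiable at $b$.

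I expect the main obstacle to be confirming that the crossing is strict (a transversal sign change) rather than a touching point, since the derivative claims depend on it. This is immediate here because the two affine pieces have different slopes, $-1$ and $-\alpha q\lambda^{-1}$.
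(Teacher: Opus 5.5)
Your proposal is correct and follows essentially the same route as the paper: write $V^{[1]}_1=\max\{g,\A g\}$, solve the affine equation on $[K\lambda^{-1},K]$, verify $K\lambda^{-1}<b<K$, and read off the two one-sided slopes. Your identity $\alpha q(\lambda-\lambda^{-1})=\alpha\lambda-1$ is just a tidier form of the paper's reduction of $b>K\lambda^{-1}$ to $r\lambda>0$, and your remark that $\lambda>1+r$ is already implied by \eqref{eq:noarb} is accurate.
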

%%%%%%%%%%%%%%%%%%%%%%%%%%%%%%%%%%%%%%%%%%%%%%%
\begin{proof}
By \eqref{eq:dp3}, $V^{[1]}_1(x)=\max\{g(x),(\A g)(x)\}$ with $(\A g)(x)$ as computed in
	Example~\ref{ex:nonconvex}.  
The function $(\A g)(x)-g(x)=-(1-\alpha q)K+(1-\alpha q\lambda^{-1})x$ is affine and strictly
	increasing on $[K\lambda^{-1},K]$ and vanishes at $b$. 
It remains to check $K\lambda^{-1}<b<K$.  
The inequality $b<K$ is equivalent to
	$\alpha q\lambda^{-1}<\alpha q$, which holds since $\lambda>1$ and $q>0$.  
The inequality $b>K\lambda^{-1}$ is equivalent to
	$\alpha q<\lambda/(\lambda+1)$.  
Using $q=(\lambda-1-r)/(\lambda-\lambda^{-1})$
	and $\lambda-\lambda^{-1}=(\lambda-1)(\lambda+1)/\lambda$,
	\begin{align*}
   		\alpha q=\frac{\lambda(\lambda-1-r)}{(1+r)(\lambda-1)(\lambda+1)} ,
	\end{align*}
	so $\alpha q<\lambda/(\lambda+1)$ is equivalent to
	$\lambda-1-r<(1+r)(\lambda-1)$, i.e.\ to $0<r\lambda$, which holds because
	$r>0$.  
Hence $V^{[1]}_1(x)=g(x)$ on $(0,b]$, with slope $-1$, and
	$V^{[1]}_1(x)=(\A g)(x)=\alpha q(K-\lambda^{-1}x)$ on $[b,\lambda K]$, with slope
	$-\alpha q\lambda^{-1}$.
\end{proof}

For the numerical values used in Figures~\ref{fig:v1-kink} and
	\ref{fig:v1-two-times}, namely $\lambda=1.2$, $r=0.05$, $K=1$, the preceding
	proposition gives
	\begin{align*}
 		x^{[1]*}_1=0.903846\ldots,\qquad
 		\partial_-V^{[1]}_1(x^{[1]*}_1)=-1,\qquad
 		\partial_+V^{[1]}_1(x^{[1]*}_1)=-\alpha q/\lambda=-0.324675\ldots .
	\end{align*}
Figure~\ref{fig:kinkzoom} magnifies the resulting corner.
%%%%%%%%%%%%%%%%%%%%%%%%%%%%%%%%%%%%%%%%%%%%%%%
\begin{figure}[htbp]
  \centering
  \includegraphics[width=0.55\textwidth]{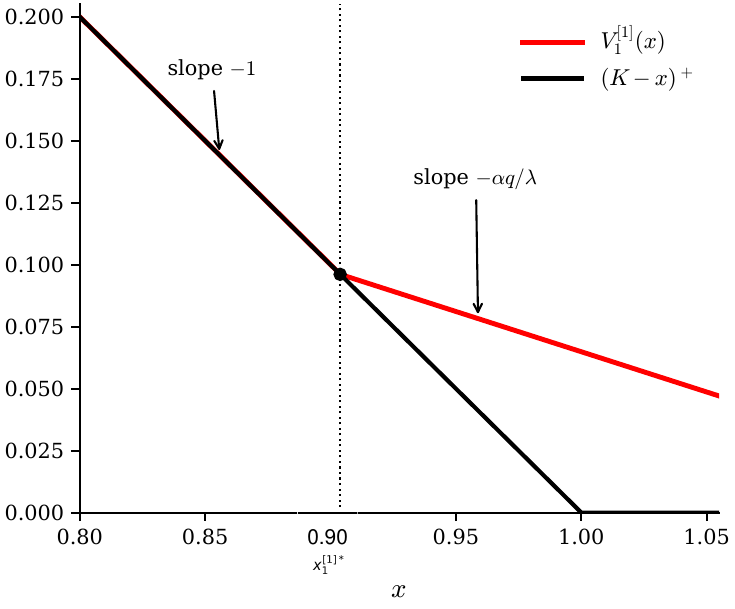}
 	 \caption{A magnified view of the fixed-mesh kink at the one-period exercise
 		 boundary.  
		 The arrows identify the two distinct one-sided slopes.  
		 This is an exact consequence of the one-step CRR recursion, not a smooth-fit drawing.}
  \label{fig:kinkzoom}
\end{figure}
%%%%%%%%%%%%%%%%%%%%%%%%%%%%%%%%%%%%%%%%%%%%%%%
Thus the correct discrete-time counterpart of the free boundary problem is
	\emph{continuous fit together with the variational inequality}, and not smooth fit.  
Explicitly, $(V^{[m]}_n(x))$ is characterised by
\begin{equation*}
  \left\{\begin{aligned}
    	&V^{[m]}_n(x)\ \ge\ g(x)+(\A V^{[m-1]}_{n-1})(x),\qquad
     		V^{[m]}_n(x)\ \ge\ (\A V^{[m]}_{n-1})(x),\\
    	&\big(V^{[m]}_n(x)-g(x)-(\A V^{[m-1]}_{n-1})(x)\big)\cdot
    	 	\big(V^{[m]}_n(x)-(\A V^{[m]}_{n-1})(x)\big)\ =\ 0,\\
    	&V^{[m]}_0(x)=g(x),\qquad V^{[0]}_n(x)\equiv0,
 	 \end{aligned}\right.
\end{equation*}
	together with the boundary conditions
	$V^{[m]}_n(x)=g(x)+(\A V^{[m-1]}_{n-1})(x)$ for $x\le x^{[m]*}_n$,
	$V^{[m]}_n(x)\to a_{\mu}$ as $x\downarrow0$ and $V^{[m]}_n(x)=0$ for
	$x\ge\lambda^{n}K$.  
No smooth-fit derivative condition is imposed, and none is valid in general.

Figures~\ref{fig:v1-kink}--\ref{fig:v1-two-times} use a clean schematic
	presentation while retaining value functions computed from the actual recursion.
In particular, the value curve leaves the payoff with a visible corner rather
	than tangentially.
%%%%%%%%%%%%%%%%%%%%%%%%%%%%%%%%%%%%%%%%%%%%%%%
\begin{figure}[htbp]
\centering
\begin{minipage}{0.48\textwidth}
  \centering
  	\includegraphics[width=\linewidth]{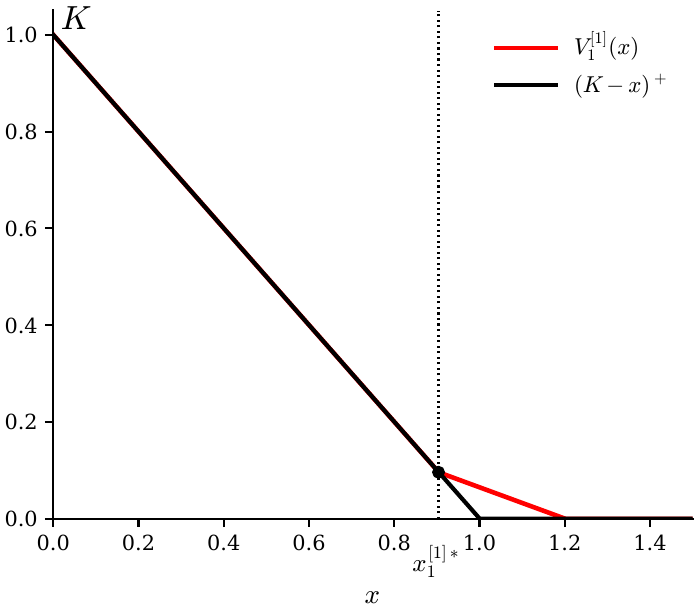}
 	 \caption{One remaining exercise right and one period to go.  
	 	The computed value $V^{[1]}_1(x)$ coincides with $g(x)$ up to 
		$x^{[1]*}_1$ and leaves it with a kink.}
	  \label{fig:v1-kink}
\end{minipage}\hfill
\begin{minipage}{0.48\textwidth}
  \centering
  	\includegraphics[width=\linewidth]{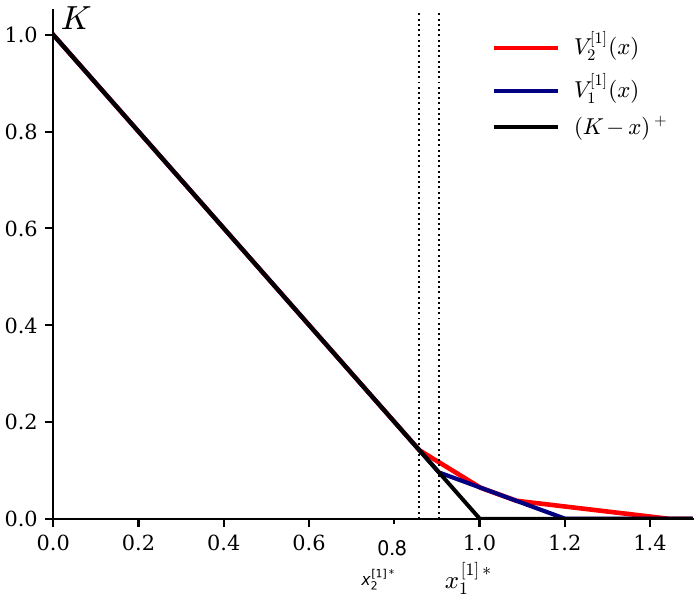}
  	\caption{Computed values $V^{[1]}_2(x)$ and $V^{[1]}_1(x)$.  
		Both boundaries are corners, and $x^{[1]*}_2<x^{[1]*}_1$, as required by
  		Theorem~\ref{thm:main}(ii).}
  	\label{fig:v1-two-times}
\end{minipage}
\end{figure}

%%%%%%%%%%%%%%%%%%%%%%%%%%%%%%%%%%%%%%%%%%%%%%%%%%%%%%%%%%%%%%%%%%%%%%%%%%%
\begin{remark}[Mesh refinement and the CRR limit]\label{rem:crr}
The fixed-mesh failure of smooth fit is compatible with smooth fit in the
	limiting Black--Scholes model.  
To illustrate this numerically, fix a horizon $T$, volatility $\sigma$ 
	and continuously compounded interest rate $\rho$, and
	use the standard Cox--Ross--Rubinstein scaling \cite{crr}
\begin{align*}
  	 \Delta t=T/N,\qquad
   	\lambda=e^{\sigma\sqrt{\Delta t}},\qquad
   	1+r=e^{\rho\Delta t},\qquad \alpha=e^{-\rho\Delta t}.
\end{align*}
For $m=1$, $K=1$, $T=0.3$, $\sigma=0.3$ and $\rho=0.05$, backward induction
	gives the initial exercise boundary $b$ and the one-sided derivatives shown below:
\begin{center}
	\begin{tabular}{rcccc}
	\hline
	$N$ & $\lambda$ & $b$ & $\partial_{-}V$ & $\partial_{+}V$\\
	\hline
	$4$   & $1.08563$ & $0.80052$ & $-1.0000$ & $-0.9129$\\
	$8$   & $1.05982$ & $0.80093$ & $-1.0000$ & $-0.9161$\\
	$16$  & $1.04193$ & $0.79148$ & $-1.0000$ & $-0.9471$\\
	$32$  & $1.02947$ & $0.78640$ & $-1.0000$ & $-0.9743$\\
	$64$  & $1.02075$ & $0.78361$ & $-1.0000$ & $-0.9784$\\
	$128$ & $1.01463$ & $0.78081$ & $-1.0000$ & $-0.9866$\\
	\hline
	\end{tabular}
\end{center}
For this sequence of meshes the derivative gap decreases markedly as $N$
	increases.  
The table is numerical evidence, not a proof of convergence of the
	derivatives.  
It is consistent with the classical smooth-fit property of the
	limiting Black--Scholes American put and with the usual CRR convergence of
	option values.
\end{remark}
%%%%%%%%%%%%%%%%%%%%%%%%%%%%%%%%%%%%%%%%%%%%%%%%%%%%%%%%%%%%%%%%%%%%%%%%%%%

For comparison with the continuous-time limit, Figure~\ref{fig:bs-three-boundaries}
	records a Black--Scholes benchmark with a genuine refractory period.
This computation is not used in any of the discrete-time arguments above.
Under
	\begin{align*}
  		dS_t=rS_t\,dt+\sigma S_t\,dB_t,
	\end{align*}
	we take
  		$K=100,\ T=1,\ r=0.05,\ \sigma=0.30,\ \delta=0.10, $
	and allow at most three exercises, with consecutive exercises separated by at
	least $\delta$. 
The coupled variational inequalities are solved numerically in Ano~\cite{ano} by
	implicit Euler finite differences and PSOR, while the refraction expectation is
	evaluated by Gauss--Hermite quadrature.  
The resulting boundaries satisfy
	\begin{align*}
 		 b^{[1]}(t)\le b^{[2]}(t)\le b^{[3]}(t).
	\end{align*}
Moreover, the time constraint forces
	$b^{[3]}=b^{[2]}$ on $(T-2\delta,T-\delta]$ and
	$b^{[3]}=b^{[2]}=b^{[1]}$ on $(T-\delta,T]$; the corresponding deadline
	jumps are visible at $T-2\delta=0.8$ and $T-\delta=0.9$.
%%%%%%%%%%%%%%%%%%%%%%%%%%%%%%%%%%%%%%%%%%%%%%%%%%%%%%%%%%%%%%%%%%%%%%%%%%%
\begin{figure}[htbp]
  \centering
  \includegraphics[width=0.75\textwidth]{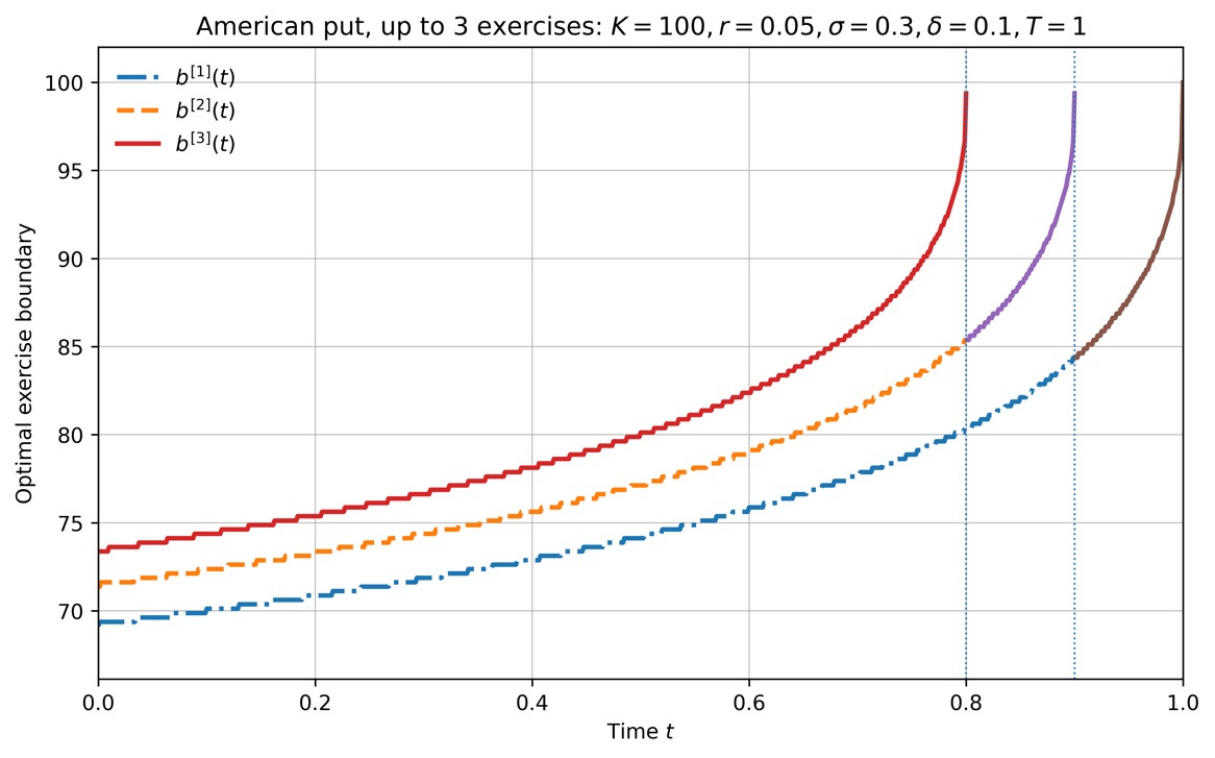}
 	 \caption{Continuous-time Black--Scholes benchmark for an American put with
  		up to three exercise rights and refractory period $\delta=0.10$.
  		Parameters are $K=100$, $T=1$, $r=0.05$ and $\sigma=0.30$.
  		The computed initial boundaries are
  		$b^{[1]}(0)\simeq69.13$, $b^{[2]}(0)\simeq71.38$ and
 		 $b^{[3]}(0)\simeq73.38$.}
  \label{fig:bs-three-boundaries}
\end{figure}

%%%%%%%%%%%%%%%%%%%%%%%%%%%%%%%%%%%%%%%%%%%%%%%
\section{American put with  random maturity}\label{sec:random}
%%%%%%%%%%%%%%%%%%%%%%%%%%%%%%%%%%%%%%%%%%%%%%%
\subsection{Set-up}
%%%%%%%%%%%%%%%%%%%%%%%%%%%%%%%%%%%%%%%%%%%%%%%
Let $\widetilde N$ be a random variable with values in $\{0,1,\dots,N\}$,
	independent of the price process, modelling a maturity which is not known in advance.  
We assume $\PP(\widetilde N=N)>0$, so every conditional survival
	probability used below is well defined.  
The option is void from calendar time $\widetilde N+1$ onwards.  
The holder does not observe $\widetilde N$ before it occurs, so exercise times are
	stopping times of the price filtration only, and the problem with $m$ rights is
	\begin{equation}\label{eq:randomobj}
   		\sup_{\vec\sigma\in\T^{[m]}_0}\ \E\Big[\sum_{i:\,\sigma_i\le N}
   			\alpha^{\sigma_i}\mathbf{1}_{\{\sigma_i\le\widetilde N\}}
  		 	\big(K-S_{\sigma_i}\big)^{+}\Big]
   			\;=\;\sup_{\vec\sigma\in\T^{[m]}_0}\
   			\E\Big[\sum_{i:\,\sigma_i\le N}\alpha^{\sigma_i}
   			\PP\big(\widetilde N\ge\sigma_i\big)\big(K-S_{\sigma_i}\big)^{+}\Big],
	\end{equation}
	the equality following from independence.  	
At calendar time $\nu=N-n$, conditional on the option still being alive, set
	\begin{equation}\label{eq:pi}
  		\pi_{n-1}:=\PP\big(\widetilde N\ge \nu+1\mid \widetilde N\ge \nu\big)
  			=\PP\big(\widetilde N\ge N-n+1\mid \widetilde N\ge N-n\big),
  			\qquad n=1,\dots,N .
	\end{equation}
Thus $\pi_{n-1}$ is the one-step conditional survival probability.  
For an elapsed time $k\in\{0,\dots,n\}$ define
	\begin{equation}\label{eq:Pi}
  		\Pi_{n,k}:= \PP\big(\widetilde N\ge N-n+k\mid \widetilde N\ge N-n\big)
 		 =\prod_{j=0}^{k-1}\pi_{n-1-j}, \qquad \Pi_{n,0}:=1 .
	\end{equation}
Let $(S^{x}_k)_{k=0}^{n}$ denote the price process started from $x$ at elapsed
	ime $0$.  
Conditionally on survival to the current date, the value with $m$ rights is
	\begin{equation}\label{eq:randomvalue}
  		\bar V^{[m]}_n(x)=\sup_{\vec\theta}\E_x\Big[\sum_{i:\,\theta_i\le n}
       		\alpha^{\theta_i}\Pi_{n,\theta_i}\big(K-S^{x}_{\theta_i}\big)^{+}\Big],
	\end{equation}
	where $\vec\theta$ is an admissible vector of \emph{elapsed} stopping times for
	the forward filtration of $(S^x_k)$, with unused rights sent to the non-exercise time.  
This formulation avoids treating the decreasing remaining-time index as
	a stopping time.  The weight in \eqref{eq:randomvalue} is the cumulative
	survival probability $\Pi_{n,k}$, not merely the one-step probability $\pi$.
%%%%%%%%%%%%%%%%%%%%%%%%%%%%%%%%%%%%%%%%%%%%%%%
\begin{assumption}\label{ass:A2}
\textbf{(A2)}\quad $\pi_0\le\pi_1\le\dots\le\pi_{N-1}$, 
	i.e.\ $n\mapsto\pi_{n-1}$ is nondecreasing.
\end{assumption}
%%%%%%%%%%%%%%%%%%%%%%%%%%%%%%%%%%%%%%%%%%%%%%%
Assumption (A2) says that the conditional one-step survival probability is
	larger when more time remains, that is, that the hazard rate of $\widetilde N$
	is nondecreasing in calendar time; equivalently, the tail sums of the law of
	$\widetilde N$ form a log-concave sequence.

The dynamic programming equation corresponding to \eqref{eq:randomvalue} is
	$\bar V^{[0]}_n(x)\equiv0,$
	\begin{equation}\label{eq:dprandom}
 		\bar V^{[m]}_0(x)=g(x),\quad 
  		\bar V^{[m]}_n(x)=\max\big\{\,g(x)+(\A_n \bar V^{[m-1]}_{n-1})(x),\
                      (\A_n \bar V^{[m]}_{n-1})(x)\,\big\},\quad n\ge1,
	\end{equation}
	where the one-step operator now carries the survival factor,
	\begin{equation*}
	  	(\A_n\varphi)(x):=\alpha\,\pi_{n-1}\big[p\,\varphi(\lambda x)
   	 		q\,\varphi(\lambda^{-1}x)\big]=\pi_{n-1}\,(\A\varphi)(x).
	\end{equation*}
By \eqref{eq:gain} the gain of $\A_n$ is $\pi_{n-1}\le1$, so that
	$\A_n(\Mcl)\subset\Mcl$ and in fact $\A_n\varphi$ is $\pi_{n-1}L$-Lipschitz
	whenever $\varphi$ is $L$-Lipschitz.  
The same median argument can now be repeated with the time-dependent
	operators $\A_n$.  
Set
	\begin{equation*}
	  	\Delta\bar V^{[m]}_n(x):=\bar V^{[m]}_n(x)-\bar V^{[m-1]}_n(x),
 	 	\qquad
  		\bar f^{[m]}_n(x):=(\A_n \Delta\bar V^{[m]}_{n-1})(x)\ (n\ge1),
	\end{equation*}
 	$\bar f^{[m]}_0(x):=0,\ \bar f^{[0]}_n(x):\equiv+\infty$,
	so that
	\begin{equation*}
  		\bar V^{[m]}_n(x)=\max\big\{g(x),\ \bar f^{[m]}_n(x)\big\}+(\A_n \bar V^{[m-1]}_{n-1})(x) .
	\end{equation*}
%%%%%%%%%%%%%%%%%%%%%%%%%%%%%%%%%%%%%%%%%%%%%%%
\begin{lemma}\label{lem:medianrandom}
For $n\ge0$ and $m\ge2$, if $\bar f^{[m]}_n(x)\le \bar f^{[m-1]}_n(x)$ then
	$$\Delta\bar V^{[m]}_n(x)=\med\{\bar f^{[m]}_n(x),\,g(x), 
	\,\bar f^{[m-1]}_n(x)\}$$; for $m=1$,
	$\Delta\bar V^{[1]}_n(x)=\max\{g(x),\bar f^{[1]}_n(x)\}$.
\end{lemma}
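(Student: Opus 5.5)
The plan is to repeat the proof of Lemma~\ref{lem:median} line by line, replacing the time-homogeneous operator $\A$ by the time-dependent operator $\A_n$. The only structural facts used there are the rewritten dynamic programming equation and the linearity of the one-step operator. Both remain available: $\A_n=\pi_{n-1}\A$ is still linear in $\varphi$, and the rewritten form
\begin{align*}
\bar V^{[m]}_n(x)=\max\{g(x),\bar f^{[m]}_n(x)\}+(\A_n\bar V^{[m-1]}_{n-1})(x)
\end{align*}
follows from \eqref{eq:dprandom}. Indeed, by linearity $(\A_n\bar V^{[m]}_{n-1})(x)=(\A_n\bar V^{[m-1]}_{n-1})(x)+\bar f^{[m]}_n(x)$, and a common summand can be pulled out of the maximum.

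Step 1 handles $n=0$ separately. There $\bar V^{[m]}_0=g$ for all $m\ge1$, so $\Delta\bar V^{[1]}_0=g=\max\{g,0\}$ and $\Delta\bar V^{[m]}_0=0$ for $m\ge2$. The latter equals $\med\{0,g,0\}=0$ with $\bar f^{[m]}_0=0$; if $m=2$ we have $\bar f^{[1]}_0=0$, so the median is still $0$. This is consistent with the convention $(\A_n\bar V^{[m-1]}_{-1}):=0$ used for the put.

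Step 2 treats $n\ge1$ and $m\ge2$. Write the rewritten equation at levels $m$ and $m-1$ and subtract. This gives
\begin{align*}
\Delta\bar V^{[m]}_n(x)=\max\{g,\bar f^{[m]}_n\}-\max\{g,\bar f^{[m-1]}_n\}+\big(\A_n(\bar V^{[m-1]}_{n-1}-\bar V^{[m-2]}_{n-1})\big)(x),
\end{align*}
and the last term is $\bar f^{[m-1]}_n(x)$ by definition. For $m=2$ the term $\bar V^{[0]}\equiv0$ causes no issue.

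Step 3 is the same three-case split as before, using $\bar f^{[m]}_n(x)\le\bar f^{[m-1]}_n(x)$:
\begin{enumerate}[(a)]
\item if $g\ge\bar f^{[m-1]}_n$, the expression equals $\bar f^{[m-1]}_n$;
\item if $\bar f^{[m]}_n\le g\le\bar f^{[m-1]}_n$, it equals $g$;
\item if $g\le\bar f^{[m]}_n$, it equals $\bar f^{[m]}_n$.
\end{enumerate}
In each case the value is the median of the three numbers.

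Step 4 is the case $m=1$. The rewritten equation holds with $\bar V^{[0]}\equiv0$, so $\A_n\bar V^{[0]}_{n-1}=0$ and $\Delta\bar V^{[1]}_n=\bar V^{[1]}_n=\max\{g,\bar f^{[1]}_n\}$.

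There is no genuine obstacle in this argument. The only point requiring care is the bookkeeping check that the survival factor $\pi_{n-1}$ enters every term uniformly through $\A_n$. Because of this, linearity carries over exactly and the cancellation in Step 2 is unaffected. Assumption (A2) is not needed for this lemma; it becomes relevant only for the monotonicity-in-$n$ statements that follow.
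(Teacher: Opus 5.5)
Your proposal is correct and matches the paper's proof. The paper says only that the argument is identical to Lemma~\ref{lem:median}, using $(\A_n\Delta\bar V^{[m-1]}_{n-1})(x)=\bar f^{[m-1]}_n(x)$; that is exactly the linearity-and-subtraction step you write out, and your separate $n=0$ check and your remark that (A2) is not needed are both accurate.
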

%%%%%%%%%%%%%%%%%%%%%%%%%%%%%%%%%%%%%%%%%%%%%%%
\begin{proof}
Identical to Lemma~\ref{lem:median}, using
	$(\A_n \Delta\bar V^{[m-1]}_{n-1})(x)=\bar f^{[m-1]}_n(x)$.
\end{proof}
%%%%%%%%%%%%%%%%%%%%%%%%%%%%%%%%%%%%%%%%%%%%%%%
\begin{proposition}\label{prop:structurerandom}
For arbitrary one-step survival probabilities $\pi_j\in[0,1]$, and for all
	$n\ge0$ and $m\ge1$,
	\begin{align*}
  		 \Delta\bar V^{[m]}_n(\cdot)\in\Mcl,\quad \bar f^{[m]}_n(\cdot)\in\Mcl,
   		\quad
   		\Delta\bar V^{[m+1]}_n(x)\le\Delta\bar V^{[m]}_n(x),\quad
   		\bar f^{[m+1]}_n(x)\le\bar f^{[m]}_n(x) .
	\end{align*}
Moreover $\bar V^{[m]}_n(x)$ is convex, nonincreasing and
	$\min\{m,n+1\}$-Lipschitz, and
	\begin{equation}\label{eq:limrandom}
  		\lim_{x\downarrow0}\bar f^{[m]}_n(x)
  		=\alpha^{m}\Big(\prod_{j=1}^{m}\pi_{n-j}\Big)K \quad (m\le n),
  			\qquad \bar f^{[m]}_n(x)\equiv0\quad(m\ge n+1).
	\end{equation}
If, in addition, (A2) holds, then
	\begin{align*}
   		\Delta\bar V^{[m]}_n(x)\le\Delta\bar V^{[m]}_{n+1}(x),
   		\qquad
   		\bar f^{[m]}_n(x)\le\bar f^{[m]}_{n+1}(x).
	\end{align*}
\end{proposition}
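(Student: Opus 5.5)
The plan is to repeat the proof of Proposition~\ref{prop:structure} word for word, with $\A$ replaced by the time-dependent operator $\A_n=\pi_{n-1}\A$. The only new input is that $\A_n$ is monotone, preserves monotonicity and convexity, and maps $L$-Lipschitz functions to $\pi_{n-1}L$-Lipschitz functions; since $\pi_{n-1}\le1$, this gives $\A_n(\Mcl)\subset\Mcl$ exactly as in Lemma~\ref{lem:operator}(iii).

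First I would record the analogue of Lemma~\ref{lem:saturation}: $\bar V^{[m]}_n=\bar V^{[n+1]}_n$ for $m\ge n+1$. Its proof goes through verbatim, because the continuation value $\A_n\bar V^{[n]}_{n-1}$ is nonnegative, so exercising dominates. Hence $\Delta\bar V^{[m]}_n\equiv0$ for $m\ge n+2$ and $\bar f^{[m]}_n\equiv0$ for $m\ge n+1$, which is the second half of \eqref{eq:limrandom}.

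Then I would induct on $n$ for the pair of claims ($\Delta\bar V^{[m]}_n,\bar f^{[m]}_n\in\Mcl$; both nonincreasing in $m$). The base case $n=0$ is as before: $\Delta\bar V^{[1]}_0=g$, and everything else vanishes. In the inductive step:
\begin{itemize}
\item $\bar f^{[m]}_n=\A_n\Delta\bar V^{[m]}_{n-1}\in\Mcl$, and $\bar f^{[m+1]}_n\le\bar f^{[m]}_n$ by monotonicity of $\A_n$.
\item This ordering licenses Lemma~\ref{lem:medianrandom}, so $\Delta\bar V^{[m]}_n$ is a median of members of $\Mcl$, hence lies in $\Mcl$ by Lemma~\ref{lem:operator}(iv).
\item The ordering $\Delta\bar V^{[m+1]}_n\le\Delta\bar V^{[m]}_n$ follows from monotonicity of the interval projection $\min\{f^{[m-1]},\max\{g,f^{[m]}\}\}$ in both endpoints, exactly as before.
\end{itemize}
Convexity and monotonicity of $\bar V^{[m]}_n$ come by induction from \eqref{eq:dprandom}. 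The Lipschitz constant $\min\{m,n+1\}$ follows from writing $\bar V^{[m]}_n=\sum_{j\le m}\Delta\bar V^{[j]}_n$ and using saturation. None of this uses (A2), since no statement compares different $n$.

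For the limit in \eqref{eq:limrandom}, I would not redo the affine computation of Lemma~\ref{lem:deep}, because with $\pi<1$ the slopes are no longer $-1$. Instead I would pass to the limit $x\downarrow0$. Each function in $\Mcl$ is bounded and monotone, so $c^{[m]}_n:=\lim_{x\downarrow0}\Delta\bar V^{[m]}_n(x)$ exists. Since $\lambda^{\pm1}x\to0$ as $x\downarrow0$, we get $\lim_{x\downarrow0}\bar f^{[m]}_n(x)=\alpha\pi_{n-1}c^{[m]}_{n-1}$, using $p+q=1$. Taking limits in the median identity gives
\[
c^{[m]}_n=\med\{\lim \bar f^{[m]}_n,\ K,\ \lim\bar f^{[m-1]}_n\},\qquad c^{[1]}_n=K .
\]
A side induction shows $\lim\bar f^{[m-1]}_n\le\alpha^{m-1}K\le K$, so the median equals $\lim\bar f^{[m-1]}_n$. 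Hence $c^{[m]}_n=\alpha\pi_{n-1}c^{[m-1]}_{n-1}$ for $2\le m\le n+1$. Unwinding this gives $c^{[m]}_{n-1}=\alpha^{m-1}\prod_{j=2}^{m}\pi_{n-j}K$ for $m\le n$, and therefore $\lim\bar f^{[m]}_n=\alpha^m\prod_{j=1}^m\pi_{n-j}K$. The main bookkeeping obstacle is making sure the index ranges respect saturation: I need $m-1\le n-1$ at each unwinding step so that no factor has already become zero, and this holds precisely when $m\le n$.

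Finally, under (A2) I would prove time-monotonicity by induction on $n$, as in Proposition~\ref{prop:structure}(iii). The base case is $\Delta\bar V^{[1]}_0=g\le\max\{g,\bar f^{[1]}_1\}$; for $m\ge2$ the base case is nonnegativity. For the step, assume $\Delta\bar V^{[m]}_{n-1}\le\Delta\bar V^{[m]}_n$. Then
\[
\bar f^{[m]}_n=\pi_{n-1}\A\Delta\bar V^{[m]}_{n-1}\le\pi_{n}\A\Delta\bar V^{[m]}_{n}=\bar f^{[m]}_{n+1}.
\]
Here I use three facts: $\A$ is monotone, $\A\Delta\bar V\ge0$, and $\pi_{n-1}\le\pi_n$ by (A2). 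This last comparison is the only place (A2) enters, and the key point is that nonnegativity lets the larger survival factor be applied to a larger nonnegative function. The median being nondecreasing in each argument then transfers the inequality to $\Delta\bar V^{[m]}$.
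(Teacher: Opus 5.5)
Your proof is correct and, apart from one sub-claim, follows the paper's argument. The shared parts are: the same induction on $n$ with $\A_n=\pi_{n-1}\A$ in place of $\A$ (gain $\pi_{n-1}\le1$, so $\A_n(\Mcl)\subset\Mcl$); the median identity, licensed by the ordering of the $\bar f$'s; saturation, used both for the Lipschitz constant $\min\{m,n+1\}$ and for $\bar f^{[m]}_n\equiv0$ when $m\ge n+1$; and the (A2) step $\pi_{n-1}\A\Delta\bar V^{[m]}_{n-1}\le\pi_n\A\Delta\bar V^{[m]}_n$, which uses nonnegativity.

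The difference is the limit in \eqref{eq:limrandom}. The paper reruns Lemma~\ref{lem:deep} by tracking affine germs near $0$, using $a-bx\mapsto\alpha\pi_{n-1}a-\pi_{n-1}bx$. You instead track only the numbers $c^{[m]}_n=\lim_{x\downarrow0}\Delta\bar V^{[m]}_n(x)$ and pass to the limit in the median identity.

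Your route is more economical for what is actually claimed. It needs only two facts: members of $\Mcl$ are bounded and monotone near $0$, and the median is continuous. It never has to identify which branch of the maximum or median is active on a whole neighbourhood of $0$. The paper's route yields more than is stated, namely the explicit affine form of $\bar V^{[m]}_n$ near $0$ with non-integer slopes. Your side bound $\lim\bar f^{[m-1]}_n\le K$ can be obtained even more directly from the ordering in $m$: $\lim\bar f^{[m-1]}_n\le\lim\bar f^{[1]}_n=\alpha\pi_{n-1}K\le K$.

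One small correction concerns the saturation step. The two alternatives there are $g+\A_n\bar V^{[n]}_{n-1}$ and $\A_n\bar V^{[n]}_{n-1}$. Exercise dominates because $g\ge0$, not because the continuation value is nonnegative.
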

%%%%%%%%%%%%%%%%%%%%%%%%%%%%%%%%%%%%%%%%%%%%%%%
\begin{proof}
The proof is the time-inhomogeneous analogue of
	Proposition~\ref{prop:structure}.  
At a fixed $n$, the induction that proves
	membership in $\Mcl$ and concavity in $m$ is unchanged, because the same
	operator $\A_n$ acts at every level $m$.  
In particular, after the ordering
	$\bar f^{[m+1]}_n(x)\le\bar f^{[m]}_n(x)$ is obtained from the induction hypothesis,
	Lemma~\ref{lem:medianrandom} applies and the interval-projection argument propagates both
	properties.

For monotonicity in $n$, assume
	$\Delta\bar V^{[m]}_{n-1}(x)\le\Delta\bar V^{[m]}_n(x)$ for every $m$.  
Under (A2),
	$\pi_n\ge\pi_{n-1}$, so positivity and monotonicity of $\A$ give
	\begin{align*}
  		\bar f^{[m]}_{n+1}(x)=\pi_n(\A \Delta\bar V^{[m]}_n)(x)
  		\ge \pi_{n-1}(\A \Delta\bar V^{[m]}_{n-1})(x)
  		=\bar f^{[m]}_n(x) .
	\end{align*}
The median identity (or the maximum formula when $m=1$) then yields
	$\Delta\bar V^{[m]}_n(x)\le\Delta\bar V^{[m]}_{n+1}(x)$.

Convexity and monotonicity of $\bar V^{[m]}_n(x)$ follow directly from
	\eqref{eq:dprandom}; the Lipschitz estimate follows by summing the marginal
	values and using saturation exactly as in Proposition~\ref{prop:structure}.
Finally, the saturation proof is purely combinatorial and is unchanged by the
	time dependence of $\A_n$.  
Near $x=0$, the affine map $a-bx$ is sent by $\A_n$ to
	$\alpha\pi_{n-1}a-\pi_{n-1}bx$, and induction gives \eqref{eq:limrandom}.
\end{proof}
%%%%%%%%%%%%%%%%%%%%%%%%%%%%%%%%%%%%%%%%%%%%%%%
\begin{theorem}%[American put with random maturity]
\label{thm:mainrandom}
Assume $r>0$.  
Define
	\begin{align*}
  		\bar x^{[m]*}_n:=\sup\{x\in(0,K]:g(x)\ge\bar f^{[m]}_n(x)\},
  		\qquad
  		\bar D^{[m]}_n:=\{x\in(0,K]:g(x)\ge\bar f^{[m]}_n(x)\}.
	\end{align*}
Then $\bar D^{[m]}_n=(0,\bar x^{[m]*}_n]$, and
	$\bar x^{[m+1]*}_n\ge\bar x^{[m]*}_n$ for every $n,m$. 
If (A2) also holds,then
	\begin{align*}
   		\bar x^{[m]*}_{n+1}\le \bar x^{[m]*}_n ,
	\end{align*}
	so the boundary is monotone in the remaining time as well.

Starting at calendar time $0$, define the scheduled exercise times recursively
	by the boundary rule of Theorem~\ref{thm:main}(iv), with
	$D^{[i]}_{N-\nu}$ replaced by $\bar D^{[i]}_{N-\nu}$.  
The schedule is optimal for \eqref{eq:randomobj}; an exercise produces a payoff only on
	$\{\sigma_i^*\le\widetilde N\}$, and if the random maturity occurs before the
	next scheduled exercise, all remaining rights expire.
\end{theorem}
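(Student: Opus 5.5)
The plan is to copy the proof of Theorem~\ref{thm:main}, using Proposition~\ref{prop:structurerandom} in place of Proposition~\ref{prop:structure} and the limit formula \eqref{eq:limrandom} in place of Lemma~\ref{lem:deep}.

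\emph{Interval structure.} Fix $n,m$. On $(0,K]$ we have $g(x)=K-x$, and $\bar f^{[m]}_n\in\Mcl$ is $1$-Lipschitz. Hence, exactly as in Theorem~\ref{thm:main}(i), the map $x\mapsto g(x)-\bar f^{[m]}_n(x)$ is nonincreasing on $(0,K]$. At $x=K$ this difference equals $-\bar f^{[m]}_n(K)\le0$. Positivity near $0$ comes from \eqref{eq:limrandom}:
\begin{itemize}
\item if $m\ge n+1$, then $\bar f^{[m]}_n\equiv0$, so the difference is $K-x>0$ on $(0,K)$;
\item if $m\le n$, the difference tends to $K\bigl(1-\alpha^m\prod_{j=1}^m\pi_{n-j}\bigr)$ as $x\downarrow0$.
\end{itemize}
In the second case the limit is positive because $r>0$ gives $\alpha<1$ and $\pi_j\le1$. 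By continuity, $\bar D^{[m]}_n$ is then a nonempty interval $(0,\bar x^{[m]*}_n]$, closed at the right endpoint.

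\emph{Monotonicity of the boundaries.} The ordering $\bar f^{[m+1]}_n\le\bar f^{[m]}_n$ from Proposition~\ref{prop:structurerandom} gives $\bar D^{[m]}_n\subseteq\bar D^{[m+1]}_n$. Since both sets are intervals with left end $0$, this yields $\bar x^{[m+1]*}_n\ge\bar x^{[m]*}_n$. Under (A2), the same proposition gives $\bar f^{[m]}_n\le\bar f^{[m]}_{n+1}$, hence $\bar D^{[m]}_{n+1}\subseteq\bar D^{[m]}_n$ and the time monotonicity of the boundary. No convexity is needed anywhere, only the Lipschitz property and the pointwise orderings.

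\emph{Optimality.} This is the step needing some care, because the problem is written in the price filtration with the random weights $\PP(\widetilde N\ge\sigma)$.
\begin{enumerate}
\item Using \eqref{eq:randomobj}, rewrite the objective as a deterministic-in-time discounted reward for the Markov chain $S$: $X_\nu=\alpha^\nu\PP(\widetilde N\ge\nu)(K-S_\nu)^+$. This is an adapted, integrable reward, so Theorem~\ref{thm:multiple} applies with the price filtration.
\item Check that its Snell-envelope recursion reduces to \eqref{eq:dprandom}. I would show by backward induction that $V^{[m]}_\nu=\alpha^\nu\PP(\widetilde N\ge\nu)\,\bar V^{[m]}_{N-\nu}(S_\nu)$. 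The key identity is $\PP(\widetilde N\ge\nu+1)=\pi_{n-1}\PP(\widetilde N\ge\nu)$ with $n=N-\nu$, which turns $\E[\cdot\mid\F_\nu]$ into $\A_n$.
\item Handle the degenerate case: if $\PP(\widetilde N\ge\nu)=0$, both sides vanish, and the convention $\PP(\widetilde N=N)>0$ rules this out anyway.
\item Compare the two sets. The exercise set of Theorem~\ref{thm:multiple} at calendar time $\nu$ is $\{g+\A_n\bar V^{[i-1]}_{n-1}\ge\A_n\bar V^{[i]}_{n-1}\}=\{g\ge\bar f^{[i]}_n\}$. On $(0,K]$ this is $\bar D^{[i]}_n$, and outside it only zero-payoff ties occur, where continuation is equally optimal.
\end{enumerate}
So the recursive schedule attains the maximum in every step of the dynamic programme, and is optimal by Theorem~\ref{thm:multiple}. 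The final sentence of the statement then just restates the indicator $\mathbf 1_{\{\sigma_i\le\widetilde N\}}$ in the objective: exercises after $\widetilde N$ pay nothing, so the remaining rights effectively expire.
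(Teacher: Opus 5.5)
Your proposal is correct and follows the paper's proof essentially step for step. The interval structure comes from the $1$-Lipschitz bound on $\bar f^{[m]}_n$ together with \eqref{eq:limrandom} and saturation; nesting in $m$ and the (A2)-monotonicity come from the orderings in Proposition~\ref{prop:structurerandom}; and optimality follows by backward induction with continuation chosen at the zero-payoff ties. Your explicit identification $V^{[m]}_\nu=\alpha^\nu\PP(\widetilde N\ge\nu)\,\bar V^{[m]}_{N-\nu}(S_\nu)$, via $\PP(\widetilde N\ge\nu+1)=\pi_{n-1}\PP(\widetilde N\ge\nu)$, is a more detailed version of the paper's remark that successive applications of $\A_n$ generate the cumulative survival factors.
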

%%%%%%%%%%%%%%%%%%%%%%%%%%%%%%%%%%%%%%%%%%%%%%%
\begin{proof}
The interval statement uses only that $\bar f^{[m]}_n(x)$ is nonincreasing and
	$1$-Lipschitz.  
Its positivity near zero follows from
	\begin{align*}
  		\lim_{x\downarrow0}\big(g(x)-\bar f^{[m]}_n(x)\big)
  		=\Big(1-\alpha^m\prod_{j=1}^{m}\pi_{n-j}\Big)K>0
	\end{align*}
	when $m\le n$, while for $m\ge n+1$ saturation gives
	$\bar f^{[m]}_n(x)\equiv0$.  
At $x=K$, $g(K)-\bar f^{[m]}_n(K)=-\bar f^{[m]}_n(K)\le0$.  
Hence the selected in-the-money contact set is exactly an interval.  
The nesting in $m$ follows from $\bar f^{[m+1]}_n(x)\le\bar f^{[m]}_n(x)$.  
Under (A2), Proposition~\ref{prop:structurerandom} gives
	$\bar f^{[m]}_{n+1}(x)\ge\bar f^{[m]}_n(x)$, which yields the stated monotonicity in $n$.  
Finally, at every live state the selected boundary action attains the
	maximum in \eqref{eq:dprandom}; the cumulative survival factors in
	\eqref{eq:randomvalue} are exactly generated by successive applications of
	$\A_n$.  Backward induction therefore proves optimality.
\end{proof}
%%%%%%%%%%%%%%%%%%%%%%%%%%%%%%%%%%%%%%%%%%%%%%%
\begin{corollary}%[Fixed versus random maturity]
\label{cor:comparison}
For every $n$ and $m$, $\bar V^{[m]}_n(x)\le V^{[m]}_n(x)$ and
	$x^{[m]*}_n\le\bar x^{[m]*}_n$; equivalently, the exercise region of the
	random-maturity option contains that of the fixed-maturity option.
\end{corollary}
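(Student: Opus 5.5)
The plan is to compare the two recursions \eqref{eq:dp2} and \eqref{eq:dprandom} level by level. Throughout I use the monotonicity of $\A$ (Lemma~\ref{lem:operator}(i)) and $\pi_{n-1}\le1$. The value inequality comes from a direct induction. The boundary inequality comes from a second induction on the marginal premia: I aim to show $\bar f^{[m]}_n(x)\le f^{[m]}_n(x)$ for all $n,m,x$.

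\emph{Step 1: values.} I prove $\bar V^{[m]}_n\le V^{[m]}_n$ for all $m$ simultaneously by induction on $n$. At $n=0$ both equal $g$ when $m\ge1$ and both vanish when $m=0$. For the inductive step, nonnegativity of $V^{[m]}_{n-1}$ gives
\[
(\A_n\bar V^{[j]}_{n-1})(x)=\pi_{n-1}(\A\bar V^{[j]}_{n-1})(x)\le \pi_{n-1}(\A V^{[j]}_{n-1})(x)\le(\A V^{[j]}_{n-1})(x)
\]
for $j=m-1,m$. Taking the maximum in \eqref{eq:dprandom} and \eqref{eq:dp2} then gives the claim. Probabilistically, this is also clear from \eqref{eq:randomvalue}, since $\Pi_{n,k}\le1$ for every admissible $\vec\theta$.

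\emph{Step 2: premia.} By the definitions of $x^{[m]*}_n$ and $\bar x^{[m]*}_n$, together with the interval structure from Theorems~\ref{thm:main}(i) and \ref{thm:mainrandom}, it suffices to show $\bar f^{[m]}_n\le f^{[m]}_n$. Then $g\ge f^{[m]}_n$ implies $g\ge\bar f^{[m]}_n$, so $D^{[m]}_n\subseteq\bar D^{[m]}_n$ and the suprema are ordered.

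I prove the stronger claim $\Delta\bar V^{[m]}_n\le\Delta V^{[m]}_n$ for all $m\ge1$, by induction on $n$. At $n=0$ the two sides are equal. Assume the claim at $n-1$. Then
\[
\bar f^{[m]}_n=\pi_{n-1}\A\Delta\bar V^{[m]}_{n-1}\le\A\Delta V^{[m]}_{n-1}=f^{[m]}_n,
\]
using nonnegativity of $\Delta V^{[m]}_{n-1}$ from Proposition~\ref{prop:structure}(i); the convention $+\infty$ covers $m=0$. Now I apply the median identities. Lemma~\ref{lem:median} applies because Proposition~\ref{prop:structure}(ii) gives the required ordering, and Lemma~\ref{lem:medianrandom} applies because Proposition~\ref{prop:structurerandom} gives it. This yields
\[
\Delta\bar V^{[m]}_n=\med\{\bar f^{[m]}_n,g,\bar f^{[m-1]}_n\}\le\med\{f^{[m]}_n,g,f^{[m-1]}_n\}=\Delta V^{[m]}_n,
\]
since the median is nondecreasing in each argument. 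For $m=1$ the same holds with the $\max\{g,\cdot\}$ formula. This closes the induction.

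The main point needing care is this second induction. Comparing only the total values $V^{[m]}$ does not control the differences $f^{[m]}_n$, so the argument must be run on the marginal values via the median representation. No assumption (A2) is needed, because the comparison is made at the same $n$ in both problems.
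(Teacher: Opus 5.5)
Your proposal is correct and follows essentially the paper's own route: induction on $n$ for $\Delta\bar V^{[m]}_n\le\Delta V^{[m]}_n$ via $\bar f^{[m]}_n\le f^{[m]}_n$ and monotonicity of the median (or of the maximum when $m=1$), with $\bar f^{[m]}_n\le f^{[m]}_n$ then giving $D^{[m]}_n\subseteq\bar D^{[m]}_n$. The only difference is that you prove $\bar V^{[m]}_n\le V^{[m]}_n$ by a separate direct induction on the dynamic programming equations. The paper instead obtains it by summing the marginal inequalities, so your Step~1 is valid but redundant.
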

%%%%%%%%%%%%%%%%%%%%%%%%%%%%%%%%%%%%%%%%%%%%%%%
\begin{proof}
We prove simultaneously by induction on $n$ that
	$\Delta\bar V^{[m]}_n(x)\le\Delta V^{[m]}_n(x)$ for every $m$.  
At $n=0$ the marginal values coincide.  
If the claim holds at $n-1$, then
	\begin{align*}
  		\bar f^{[m]}_n(x)=\pi_{n-1}(\A \Delta\bar V^{[m]}_{n-1})(x)
  		\le (\A \Delta V^{[m]}_{n-1})(x)=f^{[m]}_n(x) .
	\end{align*}
For $m=1$, the maximum representation gives
	$\Delta\bar V^{[1]}_n(x)\le\Delta V^{[1]}_n(x)$; for $m\ge2$, apply the monotonicity of
	the median in each argument.  
Summing the marginal inequalities gives
	$\bar V^{[m]}_n(x)\le V^{[m]}_n(x)$, and
	$\bar f^{[m]}_n(x)\le f^{[m]}_n(x)$ implies
	$D^{[m]}_n\subseteq\bar D^{[m]}_n$, hence
	$x^{[m]*}_n\le\bar x^{[m]*}_n$.
\end{proof}

%%%%%%%%%%%%%%%%%%%%%%%%%%%%%%%%%%%%%%%%%%%%%%%
\subsection{Examples of maturity distributions satisfying (A2)}
%%%%%%%%%%%%%%%%%%%%%%%%%%%%%%%%%%%%%%%%%%%%%%%-
\begin{example}[Uniform]\label{ex:uniform}
Let $\widetilde N$ be uniform on $\{0,1,\dots,N\}$.  
Then $\PP(\widetilde N\ge N-n)=(n+1)/(N+1)$, so
	\begin{align*}
   		\pi_{n-1}=\frac{n}{n+1}=1-\frac{1}{n+1},\qquad n=1,\dots,N ,
	\end{align*}
	which is increasing in $n$; (A2) holds.  In particular $\pi_0=1/2$.
\end{example}

\begin{example}[Truncated geometric]\label{ex:geometric}
Let $\PP(\widetilde N=k)=c\,u^{k}$, $k=0,\dots,N$, with $u=1-\rho\in(0,1)$ and
	$c=(1-u)/(1-u^{N+1})$.  
Writing $T_k:=\sum_{j=k}^{N}u^{j}
	=u^{k}(1-u^{N-k+1})/(1-u)$ we have $\pi_{n-1}=T_{N-n+1}/T_{N-n}$, and (A2) is
	equivalent to the log-concavity of $k\mapsto T_k$:
	\begin{align*}
   		T_k^{2}\ \ge\ T_{k-1}T_{k+1}.
	\end{align*}
Putting $v:=u^{N-k+1}$, this reads $(1-v)^{2}\ge(1-uv)(1-v u^{-1})$, i.e.\
	$-2v\ge-v(u+u^{-1})$, i.e.\ $u+u^{-1}\ge2$, which holds for every $u>0$ by the
	arithmetic--geometric mean inequality.  
Hence (A2) holds for every $\rho\in(0,1)$.
\end{example}

\begin{example}[Truncated Poisson]\label{ex:poisson}
Let $\PP(\widetilde N=k)=c\,\theta^{k}/k!$, $k=0,\dots,N$, $\theta>0$.  
The sequence $a_k=\theta^{k}/k!$ is log-concave, since
	$a_k^{2}/(a_{k-1}a_{k+1})=(k+1)/k\ge1$ for $k\ge1$.  
The convolution of two nonnegative log-concave sequences without internal zeros is log-concave
	\cite{stanley}; applying this to $a$ and to the sequence
	$(1,1,1,\dots)$, and reversing the index, the tail sums
	$T_k=\sum_{j=k}^{N}a_j$ form a log-concave sequence.  
As in Example~\ref{ex:geometric} this is exactly (A2).
\end{example}

Assumption (A2) is needed only for monotonicity in the remaining time,
	not for the threshold structure.

%%%%%%%%%%%%%%%%%%%%%%%%%%%%%%%%%%%%%%%%%%%%%%%
\section{Russian option}%: numeraire reduction and multiple exercise}
\label{sec:russian}
%%%%%%%%%%%%%%%%%%%%%%%%%%%%%%%%%%%%%%%%%%%%%%%
\subsection{Reduction by a change of numeraire}
%%%%%%%%%%%%%%%%%%%%%%%%%%%%%%%%%%%%%%%%%%%%%%%
Let $S_0=s_0>0$, let $m_0\ge s_0$ and put
	\begin{align*}
   		M_\nu:=\Big(\max_{0\le i\le\nu}S_i\Big)\vee m_0,\qquad \nu=0,\dots,N .
	\end{align*}
The Russian option with $m$ rights pays $M_{\sigma_i}$ at each finite exercise
	date.  
Using the non-exercise-time convention of Section~\ref{sec:general}, its value is
	\begin{equation}\label{eq:russianobj}
   		\sup_{\vec\sigma\in\T^{[m]}_0}
  		 \E\Big[\sum_{i:\,\sigma_i\le N}
     		 \alpha^{\sigma_i}M_{\sigma_i}\Big].
	\end{equation}
The reward depends on the pair $(S_\nu,M_\nu)$; the classical device of Shepp
	and Shiryaev \cite{shepp1,shepp2} reduces it to the one-dimensional ratio
	$X_\nu:=M_\nu/S_\nu\ge1$.  
In discrete time this reduction is \emph{not} a mere
	substitution --- one has $\E[\alpha^{\sigma}M_\sigma]=\E[\alpha^{\sigma}
	X_\sigma S_\sigma]\ne\E[\alpha^{\sigma}X_\sigma]$ --- but a change of numeraire,
	which we now carry out.
%%%%%%%%%%%%%%%%%%%%%%%%%%%%%%%%%%%%%%%%%%%%%%%
\begin{lemma}\label{lem:numeraire}
Let $Z_\nu:=\alpha^{\nu}S_\nu/s_0$.  
Then $(Z_\nu)_{\nu=0}^{N}$ is a strictly
	positive $\PP$-martingale with $Z_0=1$.  
Define the probability measure $\Pt$ on
	$\F_N$ by $\dd\Pt/\dd\PP:=Z_N$.  
Then:
	\begin{enumerate}[(i)]
	\item under $\Pt$ the increments $(\varepsilon_\nu)$ are i.i.d.\ with
      		\begin{align*}
        		\widetilde p:=\Pt(\varepsilon_\nu=+1)=\alpha\lambda p,\qquad
        		\widetilde q:=\Pt(\varepsilon_\nu=-1)=\alpha\lambda^{-1}q,\qquad
        		\widetilde p+\widetilde q=1 ;
		\end{align*}
	\item for every stopping time $\sigma$ with values in $\{0,\dots,N\}$,
      		\begin{align*}
        		\E\big[\alpha^{\sigma}M_\sigma\big]=s_0\,\Et\big[X_\sigma\big] ;
		\end{align*}
	\item $(X_\nu)$ is a $\Pt$-Markov chain on $[1,\infty)$ with
      		\begin{align*}
        		X_{\nu+1}=\begin{cases}
          			(\lambda^{-1}X_\nu)\vee1, & \text{with probability }\widetilde p,\\
          			\lambda X_\nu, & \text{with probability }\widetilde q,
        		\end{cases}
        		\qquad X_0=m_0/s_0 .
		\end{align*}
\end{enumerate}
\end{lemma}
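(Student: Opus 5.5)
The plan is to verify the martingale property of $Z$ directly and then push everything through the density process.

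Since $Z_{\nu+1}=Z_\nu\,\alpha\lambda^{\varepsilon_{\nu+1}}$ and $\varepsilon_{\nu+1}$ is independent of $\F_\nu$ under $\PP$, we get
\begin{align*}
\E[Z_{\nu+1}\mid\F_\nu]=Z_\nu\,\alpha(p\lambda+q\lambda^{-1})=Z_\nu
\end{align*}
by \eqref{eq:gain}. Strict positivity and $Z_0=1$ are evident, so $\Pt$ is a probability measure equivalent to $\PP$ on $\F_N$, and $\dd\Pt/\dd\PP|_{\F_\nu}=Z_\nu$.

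For (i), take any $\nu$ and any signs $e_1,\dots,e_\nu\in\{\pm1\}$. Using the density restricted to $\F_\nu$,
\begin{align*}
\Pt(\varepsilon_1=e_1,\dots,\varepsilon_\nu=e_\nu)=\E[Z_\nu;\,\cdot\,]=\prod_{k=1}^{\nu}\alpha\lambda^{e_k}\PP(\varepsilon_k=e_k).
\end{align*}
This product form gives independence and the identical marginals $\widetilde p=\alpha\lambda p$ and $\widetilde q=\alpha\lambda^{-1}q$. Their sum equals $1$, again by \eqref{eq:gain}.

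For (ii), write $\alpha^\sigma M_\sigma=\alpha^\sigma S_\sigma X_\sigma=s_0Z_\sigma X_\sigma$. Because $Z$ is a martingale and $\sigma\le N$ is bounded, optional sampling gives $Z_\sigma=\E[Z_N\mid\F_\sigma]$. The variable $X_\sigma$ is $\F_\sigma$-measurable and bounded (all quantities are finite on the finite lattice), so
\begin{align*}
\E[Z_\sigma X_\sigma]=\E[Z_NX_\sigma]=\Et[X_\sigma].
\end{align*}
The only care needed here is the $\F_\sigma$-measurability of $X_\sigma$, which is standard since $X$ is adapted.

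For (iii), first derive the recursion. We have $M_{\nu+1}=M_\nu\vee S_{\nu+1}$ and $S_{\nu+1}=S_\nu\lambda^{\varepsilon_{\nu+1}}$. Hence
\begin{align*}
X_{\nu+1}=\frac{M_\nu\vee S_{\nu+1}}{S_{\nu+1}}=(X_\nu\lambda^{-\varepsilon_{\nu+1}})\vee1.
\end{align*}
When $\varepsilon=+1$ this is $(\lambda^{-1}X_\nu)\vee1$. When $\varepsilon=-1$ it is $\lambda X_\nu\vee1=\lambda X_\nu$, because $X_\nu\ge1$ and $\lambda>1$. Also $X_0=m_0/s_0\ge1$ since $m_0\ge s_0$. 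So $X_{\nu+1}$ is a fixed function of $X_\nu$ and $\varepsilon_{\nu+1}$. By (i), $\varepsilon_{\nu+1}$ is independent of $\F_\nu$ under $\Pt$ with law $(\widetilde p,\widetilde q)$, which yields the Markov property with the stated transitions.

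I expect the main obstacle to be only bookkeeping in (ii): justifying the change of measure at a random time via optional sampling, rather than any genuine difficulty.
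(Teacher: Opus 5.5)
Your proposal is correct and follows essentially the same route as the paper. Both use the one-step ratio $Z_{\nu+1}/Z_\nu=\alpha\lambda^{\varepsilon_{\nu+1}}$ with the gain identity for the martingale property and for (i), optional sampling $Z_\sigma=\E[Z_N\mid\F_\sigma]$ for (ii), and the recursion $X_{\nu+1}=(X_\nu\lambda^{-\varepsilon_{\nu+1}})\vee1$ for (iii). Your product formula for the finite-dimensional laws under $\Pt$ and your explicit independence argument for the Markov property just spell out steps the paper leaves implicit; both take $\F_\nu$ to be the natural filtration of the increments.
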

%%%%%%%%%%%%%%%%%%%%%%%%%%%%%%%%%%%%%%%%%%%%%%%
\begin{proof}
$\E[S_{\nu+1}\mid\F_\nu]=(p\lambda+q\lambda^{-1})S_\nu=(1+r)S_\nu$ by
	\eqref{eq:pq}, so $(Z_\nu)$ is a positive martingale with $Z_0=1$ and $\Pt$ is a
	probability measure equivalent to $\PP$.  
(i) follows from $Z_{\nu+1}/Z_\nu=\alpha\lambda^{\varepsilon_{\nu+1}}$,
	which gives the stated one-step weights, and from $\alpha(p\lambda+q\lambda^{-1})=1$, 
	which is \eqref{eq:gain}.  
For (ii), $M_\sigma=X_\sigma S_\sigma$ and 
	$\alpha^{\sigma}S_\sigma=s_0Z_\sigma$, so, $\sigma$ being bounded and
	$X_\sigma$ being $\F_\sigma$-measurable,
	\begin{align*}
  		\E\big[\alpha^{\sigma}M_\sigma\big]=s_0\,\E\big[Z_\sigma X_\sigma\big]
  			=s_0\,\E\big[\E[Z_N\mid\F_\sigma]\,X_\sigma\big]
  			=s_0\,\E\big[Z_N X_\sigma\big]=s_0\,\Et\big[X_\sigma\big].
	\end{align*}
For (iii), $M_{\nu+1}=M_\nu\vee S_{\nu+1}$ gives
	$X_{\nu+1}=(M_\nu\vee S_{\nu+1})/S_{\nu+1}=(X_\nu S_\nu/S_{\nu+1})\vee1$, and
	$X_\nu\ge1$ makes the maximum superfluous in the down case.
\end{proof}

By Lemma~\ref{lem:numeraire}(ii) applied to each finite component and by
	linearity, \eqref{eq:russianobj} equals
	\begin{align*}
  		s_0\sup_{\vec\sigma\in\T^{[m]}_0}
  	\Et\Big[\sum_{i:\,\sigma_i\le N}X_{\sigma_i}\Big],
	\end{align*}
	an \emph{undiscounted} multiple stopping problem for the reflected random walk
	$X$ under $\Pt$.  
The discount has been absorbed into the dynamics, as the following shows.
%%%%%%%%%%%%%%%%%%%%%%%%%%%%%%%%%%%%%%%%%%%%%%%
\begin{lemma}\label{lem:Boperator}
Define, for $\varphi:[1,\infty)\to[0,\infty)$,
	\begin{align*}
  		(\B\varphi)(x):=\widetilde p\,\varphi\big((\lambda^{-1}x)\vee1\big)
    		+\widetilde q\,\varphi(\lambda x),\qquad x\ge1 .
	\end{align*}
Then $\B$ is order preserving and preserves nonnegativity.  
If $\varphi$ is nondecreasing, so is $\B\varphi$; if $\varphi$ is both nondecreasing and
	convex, then $\B\varphi$ is convex.  
If $\varphi$ is $L$-Lipschitz, then $\B\varphi$ is
	$\alpha L$-Lipschitz.  In particular $\B$ is a strict contraction on
	Lipschitz constants when $r>0$.
\end{lemma}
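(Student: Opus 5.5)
The plan is to verify each property directly from the explicit formula for $\B$. Every claim reduces to two facts. First, $\B$ is a convex combination, since $\widetilde p,\widetilde q>0$ and $\widetilde p+\widetilde q=1$ by Lemma~\ref{lem:numeraire}(i). Second, the two inner maps have simple structure:
\begin{align*}
h_-(x):=(\lambda^{-1}x)\vee1,\qquad h_+(x):=\lambda x,\qquad x\ge1 .
\end{align*}
Both map $[1,\infty)$ into itself, both are nondecreasing and convex, $h_-$ is $\lambda^{-1}$-Lipschitz, and $h_+$ is $\lambda$-Lipschitz.

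\emph{Order, nonnegativity and monotonicity.} Order preservation and preservation of nonnegativity follow at once from the positivity of the weights. If $\varphi$ is nondecreasing, then so are $\varphi\circ h_-$ and $\varphi\circ h_+$, because the inner maps are nondecreasing. A positive combination of nondecreasing functions is nondecreasing, so $\B\varphi$ is nondecreasing.

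\emph{Convexity.} I will use the standard fact that $\varphi\circ h$ is convex whenever $\varphi$ is convex and nondecreasing and $h$ is convex. For $t\in[0,1]$ and $x,y\ge 1$, convexity of $h$ gives $h(tx+(1-t)y)\le t\,h(x)+(1-t)h(y)$. Monotonicity of $\varphi$ lets us apply $\varphi$ to both sides, and convexity of $\varphi$ then bounds the right-hand side by $t\,\varphi(h(x))+(1-t)\varphi(h(y))$. Applying this with $h=h_-$ and $h=h_+$ and summing with positive weights shows $\B\varphi$ is convex.

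This is the only step with any subtlety. The kink of $h_-$ at $x=\lambda$ (the reflection at $1$) makes $\varphi\circ h_-$ fail to be convex for a general convex $\varphi$. That is exactly why the lemma assumes $\varphi$ nondecreasing as well as convex.

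\emph{Lipschitz constant.} For $x,y\ge1$,
\begin{align*}
|(\B\varphi)(x)-(\B\varphi)(y)|
&\le \widetilde p\,L\,|h_-(x)-h_-(y)|+\widetilde q\,L\,|h_+(x)-h_+(y)|\\
&\le \big(\widetilde p\lambda^{-1}+\widetilde q\lambda\big)L\,|x-y| .
\end{align*}
By Lemma~\ref{lem:numeraire}(i), $\widetilde p\lambda^{-1}=\alpha p$ and $\widetilde q\lambda=\alpha q$, so the factor equals $\alpha(p+q)=\alpha$. Hence $\B\varphi$ is $\alpha L$-Lipschitz. Finally, $r>0$ gives $\alpha=(1+r)^{-1}<1$, which is the strict contraction of Lipschitz constants.

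This contrasts with \eqref{eq:gain} for $\A$. There the weights $\lambda$ and $\lambda^{-1}$ combine with $p,q$ to give gain $1$. Here the change of numeraire swaps which branch carries the factor $\lambda$, and that swap produces the gain $\alpha$.
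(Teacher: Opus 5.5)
Your proof is correct and follows essentially the same route as the paper: order preservation and nonnegativity from the positive weights, monotonicity and convexity from the inner maps being nondecreasing and convex (using that $\varphi$ is nondecreasing), and the Lipschitz bound from the $\lambda^{-1}$- and $\lambda$-Lipschitz state maps together with $\widetilde p\lambda^{-1}+\widetilde q\lambda=\alpha p+\alpha q=\alpha$. Your counterexample remark explaining why the nondecreasing hypothesis is needed for convexity is accurate, but it is not required for the proof.
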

%%%%%%%%%%%%%%%%%%%%%%%%%%%%%%%%%%%%%%%%%%%%%%%
\begin{proof}
Order preservation and nonnegativity follow from the positive weights.
Both state maps $x\mapsto(\lambda^{-1}x)\vee1$ and $x\mapsto\lambda x$ are
	nondecreasing, so monotonicity is preserved.  
The first state map is convex and the second is affine.  
Hence, when $\varphi$ is nondecreasing and convex, both
	compositions are convex and so is their positive linear combination.  
Finally, the two state maps are $\lambda^{-1}$- and $\lambda$-Lipschitz, respectively,and
	\begin{align*}
   		\widetilde p\lambda^{-1}+\widetilde q\lambda=\alpha p+\alpha q=\alpha ,
	\end{align*}
	which proves the Lipschitz claim.
\end{proof}

%%%%%%%%%%%%%%%%%%%%%%%%%%%%%%%%%%%%%%%%%%%%%%%
\subsection{Dynamic programming and the exercise boundary}
%%%%%%%%%%%%%%%%%%%%%%%%%%%%%%%%%%%%%%%%%%%%%%%
Write $g(x):=x$ for the reward, let $n=N-\nu$ be the remaining time and let
$V^{[m]}_n(x)$ denote the value, in the reduced problem, with $m$ rights, $n$
	periods to go and $X=x$.  
Then $V^{[0]}_n(x)\equiv0$, 
	\begin{equation}\label{eq:dprussian}
 	 	V^{[m]}_0(x)=g(x)\ (m\ge1),\quad 
  		V^{[m]}_n(x)=\max\big\{\,g(x)+(\B V^{[m-1]}_{n-1})(x),\ (\B V^{[m]}_{n-1})(x)\,\big\},
  		\ n\ge1 ,
	\end{equation}
	and the value of the original problem \eqref{eq:russianobj} is 
	$s_0\,V^{[m]}_N(m_0/s_0)$.  
Set, in complete analogy with \eqref{eq:DeltaVf},
	\begin{align*}
 		 \Delta V^{[m]}_n(x):=V^{[m]}_n(x)-V^{[m-1]}_n(x),\qquad
 	 	f^{[m]}_n(x):=(\B \Delta V^{[m]}_{n-1})(x)\ (n\ge1),
	\end{align*}
	$f^{[m]}_0(x):=0,\ f^{[0]}_n(x):\equiv+\infty$,
	so that
	\begin{equation*}
 	 	V^{[m]}_n(x)=\max\big\{g(x),\ f^{[m]}_n(x)\big\}+(\B V^{[m-1]}_{n-1})(x),
	\end{equation*}
	and exercise is an optimal action exactly when $x\ge f^{[m]}_n(x)$.  
Let
	\begin{align*}
  		\Mcl^{+}:=\Big\{\varphi:[1,\infty)\to[0,\infty)\ \Big|\
  		\varphi\ \text{nondecreasing and }1\text{-Lipschitz}\Big\}.
	\end{align*}
%%%%%%%%%%%%%%%%%%%%%%%%%%%%%%%%%%%%%%%%%%%%%%%
\begin{proposition}%[Marginal-value structure for the Russian option]
\label{prop:russianstructure}
Assume $r>0$.  For all $n\ge0$ and $m\ge1$:
\begin{enumerate}[(i)]
	\item for $m\ge2$,
      		$\Delta V^{[m]}_n(x)=\med\{f^{[m]}_n(x),\,g(x),\,f^{[m-1]}_n(x)\},$
      		while $\Delta V^{[1]}_n(x)=\max\{g(x),f^{[1]}_n(x)\}$;
	\item $\Delta V^{[m]}_n(\cdot)\in\Mcl^{+}$, and $f^{[m]}_n(\cdot)\in\Mcl^{+}$ is moreover
      		$\alpha$-Lipschitz;
	\item $\Delta V^{[m+1]}_n(x)\le\Delta V^{[m]}_n(x)$ and $f^{[m+1]}_n(x)\le f^{[m]}_n(x)$;
	\item $\Delta V^{[m]}_n(x)\le\Delta V^{[m]}_{n+1}(x)$ and
      		$f^{[m]}_n(x)\le f^{[m]}_{n+1}(x)$;
	\item $V^{[m]}_n(x)$ is convex, nondecreasing and $L_m$-Lipschitz with
      		$L_m:=1+\alpha+\dots+\alpha^{m-1}$.
\end{enumerate}
\end{proposition}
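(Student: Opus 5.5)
The plan is to mirror the proof of Proposition~\ref{prop:structure}, with $\A$ replaced by $\B$ and $\Mcl$ by $\Mcl^{+}$. First I observe that the median identity of Lemma~\ref{lem:median} is purely algebraic. It uses only the rewritten recursion
\begin{align*}
V^{[m]}_n(x)=\max\{g(x),f^{[m]}_n(x)\}+(\B V^{[m-1]}_{n-1})(x)
\end{align*}
and linearity of $\B$, so that $\B(V^{[m-1]}_{n-1}-V^{[m-2]}_{n-1})=f^{[m-1]}_n$. Hence (i) holds at every $(n,m)$ at which $f^{[m]}_n\le f^{[m-1]}_n$. Before starting, I also check that $\Mcl^{+}$ is closed under $\max$, $\min$ and $\med$, and that $g(x)=x$ belongs to $\Mcl^{+}$.

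\emph{Parts (i)--(iii).} I argue by induction on $n$, treating (i)--(iii) simultaneously.

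For $n=0$: $\Delta V^{[1]}_0=g\in\Mcl^{+}$, $\Delta V^{[m]}_0=0$ for $m\ge2$, and $f^{[m]}_0=0$, which is trivially $\alpha$-Lipschitz.

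For the step, assume the claims at $n-1$.
\begin{itemize}
\item[$\bullet$] Lemma~\ref{lem:Boperator} gives $f^{[m]}_n=\B\Delta V^{[m-1+1]}_{n-1}$ nonnegative and nondecreasing. Since $\Delta V^{[m]}_{n-1}$ is $1$-Lipschitz, $f^{[m]}_n$ is $\alpha$-Lipschitz.
\item[$\bullet$] Order preservation of $\B$ turns $\Delta V^{[m+1]}_{n-1}\le\Delta V^{[m]}_{n-1}$ into $f^{[m+1]}_n\le f^{[m]}_n$.
\item[$\bullet$] This ordering licenses the median identity (i) at level $n$. Closure of $\Mcl^{+}$ under $\max$ and $\med$ then puts $\Delta V^{[m]}_n$ in $\Mcl^{+}$.
\item[$\bullet$] The interval-projection monotonicity, exactly as in Proposition~\ref{prop:structure}, gives $\Delta V^{[m+1]}_n\le\Delta V^{[m]}_n$. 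The case $m=1$ uses $\min\{f^{[1]},\max\{g,f^{[2]}\}\}\le\max\{g,f^{[1]}\}$.
\end{itemize}

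\emph{Part (iv).} I use a second induction on $n$. The base is $\Delta V^{[1]}_0=g\le\max\{g,f^{[1]}_1\}$, with nonnegativity handling $m\ge2$. For the step, $\B$ transports $\Delta V^{[m]}_{n-1}\le\Delta V^{[m]}_n$ to $f^{[m]}_n\le f^{[m]}_{n+1}$. Monotonicity of $\max$ and $\med$ in each argument, together with the already established form (i) at both $n$ and $n+1$, then finishes the step.

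\emph{Part (v).} Convexity and monotonicity follow by induction on \eqref{eq:dprussian}. Here the paper's Lemma~\ref{lem:Boperator} requires the input to be nondecreasing \emph{and} convex, so both properties must be carried together through the induction. The maximum of two convex nondecreasing functions, and the sum $g+\B V^{[m-1]}_{n-1}$, preserve both.

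The step I expect to need care is the Lipschitz constant $L_m=1+\alpha+\dots+\alpha^{m-1}$. Summing the marginal values only yields the constant $m$, so I would instead induct directly on the DP. If $V^{[m-1]}_{n-1}$ is $L_{m-1}$-Lipschitz and $V^{[m]}_{n-1}$ is $L_m$-Lipschitz, then:
\begin{itemize}
\item[$\bullet$] the exercise branch $g+\B V^{[m-1]}_{n-1}$ is $(1+\alpha L_{m-1})=L_m$-Lipschitz;
\item[$\bullet$] the continuation branch $\B V^{[m]}_{n-1}$ is $\alpha L_m\le L_m$-Lipschitz.
\end{itemize}
The maximum of the two is therefore $L_m$-Lipschitz. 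The base cases are $V^{[m]}_0=g$, which is $1$-Lipschitz, and $V^{[0]}\equiv0$. This is where the strict gain $\alpha<1$ from Lemma~\ref{lem:Boperator} replaces the saturation argument used for the put.
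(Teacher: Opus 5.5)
Your proposal is correct and follows the paper's proof essentially step for step: a simultaneous induction on $n$ for (i)--(iii) using Lemma~\ref{lem:Boperator} and the interval-projection comparison, a second induction pushing $\Delta V^{[m]}_{n-1}\le\Delta V^{[m]}_n$ through $\B$ for (iv), and the direct dynamic-programming bound $\operatorname{Lip}(V^{[m]}_n)\le\max\{1+\alpha L_{m-1},\,\alpha L_m\}=L_m$ for (v). One cosmetic point: that last step uses only $\alpha\le 1$ together with the identity $1+\alpha L_{m-1}=L_m$, so it does not depend on the strict inequality $\alpha<1$.
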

%%%%%%%%%%%%%%%%%%%%%%%%%%%%%%%%%%%%%%%%%%%%%%%
\begin{proof}
We argue simultaneously by induction on $n$, as in Proposition~\ref{prop:structure}.  
At $n=0$,$\Delta V^{[1]}_0(x)=g(x)=x\in\Mcl^+$, 
	$\Delta V^{[m]}_0(x)=0$ for $m\ge2$, and $f^{[m]}_0(x)=0$.

Assume the assertions about $\Delta V_{n-1}^{[m]}(x)$ and their ordering in $m$.
Lemma~\ref{lem:Boperator} gives
	$f^{[m]}_n(x)=(\B \Delta V^{[m]}_{n-1})(\cdot)\in\Mcl^+$, with Lipschitz constant at most
	$\alpha$, and also $f^{[m+1]}_n(x)\le f^{[m]}_n(x)$.  
For $m=1$,$\Delta V^{[1]}_n(x)=\max\{g(x),f^{[1]}_n(x)\}\in\Mcl^+$.  
For $m\ge2$ the ordering of the $f$'s permits the median identity, 
	and closure of $\Mcl^+$ under the mediangives $\Delta V^{[m]}_n(\cdot)\in\Mcl^+$.  
The interval-projection comparison used in Proposition~\ref{prop:structure} then yields
	$\Delta V^{[m+1]}_n(x)\le\Delta V^{[m]}_n(x)$.  
This proves (i)--(iii).

For (iv), the base step is
	$ \Delta V^{[1]}_0(x)=g(x)\le\max\{g(x),f^{[1]}_1(x)\}=\Delta V^{[1]}_1(x),$
	while $\Delta V^{[m]}_0(x)=0\le\Delta V^{[m]}_1(x)$ for $m\ge2$.  
If $\Delta V^{[m]}_{n-1}(x)\le\Delta V^{[m]}_n(x)$ for every $m$, order preservation of $\B$
	gives $f^{[m]}_n(x)\le f^{[m]}_{n+1}(x)$.  
The maximum formula for $m=1$ and the median formula for $m\ge2$ then give
	$\Delta V^{[m]}_n(x)\le\Delta V^{[m]}_{n+1}(x)$.

For (v), convexity and monotonicity follow by induction from
	\eqref{eq:dprussian} and Lemma~\ref{lem:Boperator}.  
Moreover,
	\begin{align*}
  		\operatorname{Lip}_x\big(V^{[m]}_n(x)\big)
  		\le \max\Big\{ 1+\alpha\,\operatorname{Lip}_x\big(V^{[m-1]}_{n-1}(x)\big),
      		\alpha\,\operatorname{Lip}_x\big(V^{[m]}_{n-1}(x)\big)\Big\},
	\end{align*}
	and a double induction yields
	$\operatorname{Lip}_x\big(V^{[m]}_n(x)\big)\le L_m=1+\alpha+\cdots+\alpha^{m-1}$.
\end{proof}

%%%%%%%%%%%%%%%%%%%%%%%%%%%%%%%%%%%%%%%%%%%%%%%
\begin{theorem}%[Russian option: optimal threshold policy]
\label{thm:russian}
Assume $r>0$.  For $n\ge0$ and $m\ge1$ define
	\begin{align*}
  		x^{[m]*}_n:=\inf\big\{x\ge1:\ x\ge f^{[m]}_n(x)\big\}.
	\end{align*}
Then:
\begin{enumerate}[(i)]
	\item $x\mapsto x-f^{[m]}_n(x)$ is strictly increasing on $[1,\infty)$, with
      		every secant slope at least $1-\alpha>0$, and tends to $+\infty$; hence
      		$x^{[m]*}_n\in[1,\infty)$ is well defined and the exercise region is
      		\begin{align*}
       		 D^{[m]}_n=\big\{x\ge1:\ x\ge f^{[m]}_n(x)\big\}
        		=\big[x^{[m]*}_n,\ \infty\big) ;
		\end{align*}
	\item $1=x^{[m]*}_0\le x^{[m]*}_1\le\dots\le x^{[m]*}_N$;
	\item $x^{[m+1]*}_n\le x^{[m]*}_n$, so that
      		$D^{[1]}_n\subseteq D^{[2]}_n\subseteq\dots\subseteq D^{[m]}_n$;
	\item with $\min\emptyset:=\partial$, define
      		\begin{align*}
        		\sigma^{*}_m=\min\big\{\nu\in\{0,\dots,N\}:\ 
           		X_\nu\ge x^{[m]*}_{N-\nu}\big\},
		\end{align*}
      		and recursively, for $i=m-1,\dots,1$,
     		\begin{align*}
        		\sigma^{*}_{i}=\min\big\{\nu\in\{0,\dots,N\}:\ 
           		\nu>\sigma^{*}_{i+1},\ X_\nu\ge x^{[i]*}_{N-\nu}\big\}.
		\end{align*}
      		The finite entries are strictly increasing and unused rights are placed at $\partial$.  
		This rule is optimal for \eqref{eq:russianobj}, and the value
      			of the option equals $s_0V^{[m]}_N(m_0/s_0)$.
\end{enumerate}
\end{theorem}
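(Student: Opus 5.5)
The plan is to mirror the proof of Theorem~\ref{thm:main}, with Proposition~\ref{prop:russianstructure} in place of Proposition~\ref{prop:structure}. The one genuinely new point is part (i). There the single-crossing property comes from the strict Lipschitz contraction of $\B$, not from a $1$-Lipschitz bound.

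\emph{Step (i).} By Proposition~\ref{prop:russianstructure}(ii), $f^{[m]}_n$ is $\alpha$-Lipschitz for $n\ge1$; for $n=0$ it vanishes identically. Hence for $1\le y<x$,
\begin{align*}
  \big(x-f^{[m]}_n(x)\big)-\big(y-f^{[m]}_n(y)\big)\ \ge\ (x-y)-\alpha(x-y)=(1-\alpha)(x-y),
\end{align*}
and $1-\alpha>0$ because $r>0$. So $h(x):=x-f^{[m]}_n(x)$ is continuous and strictly increasing with secant slopes at least $1-\alpha$, and $h(x)\ge h(1)+(1-\alpha)(x-1)\to+\infty$. The set $\{x\ge1:h(x)\ge0\}$ is therefore nonempty. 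It is an up-set by monotonicity and closed by continuity, so it equals $[x^{[m]*}_n,\infty)$ with $x^{[m]*}_n\in[1,\infty)$. This is exactly where $r>0$ is needed. With $r=0$ the slope bound degenerates to $0$, and the region could be empty or fail to be an interval. For $n=0$ we have $f\equiv0$, so $x^{[m]*}_0=1$.

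\emph{Steps (ii) and (iii).} Part (ii) follows from Proposition~\ref{prop:russianstructure}(iv): $f^{[m]}_n\le f^{[m]}_{n+1}$ gives $h_{n+1}\le h_n$, hence $D^{[m]}_{n+1}\subseteq D^{[m]}_n$ and $x^{[m]*}_n\le x^{[m]*}_{n+1}$. Combined with $x^{[m]*}_0=1$, this yields the chain. Part (iii) follows in the same way from Proposition~\ref{prop:russianstructure}(iii): $f^{[m+1]}_n\le f^{[m]}_n$ gives $D^{[m]}_n\subseteq D^{[m+1]}_n$, i.e.\ $x^{[m+1]*}_n\le x^{[m]*}_n$.

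\emph{Step (iv).} Lemma~\ref{lem:numeraire}(ii), applied componentwise, reduces \eqref{eq:russianobj} to $s_0$ times the undiscounted multiple stopping problem for the $\Pt$-Markov chain $X$ of Lemma~\ref{lem:numeraire}(iii). This is the Markovian setting of Section~\ref{sec:markov} with $\alpha$ replaced by $1$ and transition operator $\B$. Integrability holds because $X_\nu\le X_0\lambda^{N}$, so Theorem~\ref{thm:multiple} applies, and its dynamic programming equation is \eqref{eq:dprussian}.

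By the decomposition $V^{[m]}_n=\max\{g,f^{[m]}_n\}+\B V^{[m-1]}_{n-1}$, exercising at a state is optimal iff $x\ge f^{[m]}_n(x)$, i.e.\ iff $x\ge x^{[m]*}_n$ by (i). So the threshold rule selects a maximising action at every state, and the recursive definition reproduces \eqref{eq:calendar}. Theorem~\ref{thm:multiple} then gives admissibility: the finite entries are strictly increasing and unused rights sit at $\partial$. It also gives optimality and the value $s_0V^{[m]}_N(m_0/s_0)$.

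The only delicate point is verifying that the change of measure commutes with the multiple-stopping supremum. Each admissible vector's payoff is a finite sum of bounded-stopping-time terms, so linearity and Lemma~\ref{lem:numeraire}(ii) suffice, with the filtration unchanged.
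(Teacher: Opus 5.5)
Your proposal is correct and follows the paper's proof essentially step for step. In (i) you use the $\alpha$-Lipschitz bound on $f^{[m]}_n$ from Proposition~\ref{prop:russianstructure}(ii), in (ii)--(iii) parts (iv) and (iii) of that proposition, and in (iv) Theorem~\ref{thm:multiple} in Markovian form under $\Pt$ with Lemma~\ref{lem:numeraire}(ii); your continuity argument for closedness of the exercise set makes explicit a step the paper leaves implicit. One aside is slightly off but does not affect the proof: for $r=0$ the same argument still makes $f^{[m]}_n$ $1$-Lipschitz, so $x-f^{[m]}_n(x)$ is nondecreasing and the exercise set is an up-set, possibly empty, and so cannot fail to be an interval.
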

%%%%%%%%%%%%%%%%%%%%%%%%%%%%%%%%%%%%%%%%%%%%%%%
\begin{proof}
(i) By Proposition~\ref{prop:russianstructure}(ii), $f^{[m]}_n(x)$ is
	$\alpha$-Lipschitz, so for $y<x$,
	$\big(x-f^{[m]}_n(x)\big)-\big(y-f^{[m]}_n(y)\big)\ge(1-\alpha)(x-y)>0$.  
Since $f^{[m]}_n(x)$ has at most linear growth of slope $\alpha<1$, 
	$x-f^{[m]}_n(x)\to+\infty$.  
A strictly increasing function has $\{x\ge f^{[m]}_n(x)\}$ equal to a half-line.

(ii) $f^{[m]}_0(x)=0$ gives $x^{[m]*}_0=1$; by
	Proposition~\ref{prop:russianstructure}(iv), $f^{[m]}_{n+1}(x)\ge f^{[m]}_n(x)$, hence
	$\{x\ge f^{[m]}_{n+1}(x)\}\subseteq\{x\ge f^{[m]}_n(x)\}$ and
	$x^{[m]*}_{n+1}\ge x^{[m]*}_n$.

(iii) By Proposition~\ref{prop:russianstructure}(iii),
	$f^{[m+1]}_n(x)\le f^{[m]}_n(x)$, hence
	$\{x\ge f^{[m]}_n(x)\}\subseteq\{x\ge f^{[m+1]}_n(x)\}$.

(iv) Combine Theorem~\ref{thm:multiple} (in the Markovian form
	\eqref{eq:calendar}, applied under $\Pt$ to the chain $X$, whose reward
	$g(x)=x$ satisfies $\Et[\max_{\nu\le N}X_\nu]\le\lambda^{N}X_0<\infty$) with
	Lemma~\ref{lem:numeraire}(ii).  	
Since $\Pt\sim\PP$, the class of admissible 
	stopping vectors is the same under both measures.
\end{proof}

\begin{remark}
For an interior boundary $b=x^{[m]*}_n>1$, the same one-sided argument as in
Proposition~\ref{prop:onesided} gives
	\begin{align*}
   		1\le\partial_{+}V^{[m]}_n(b)\le L_m,\qquad
   		\partial_{-}V^{[m]}_n(b)\le\partial_{+}V^{[m]}_n(b).
	\end{align*}
The inequality between the one-sided derivatives can be strict, so smooth fit
	is not a general fixed-mesh property here either.  
Unlike the put, however,
	existence and uniqueness of the economically relevant boundary follow
	immediately from the strict monotonicity in Theorem~\ref{thm:russian}(i): the
	contraction property of $\B$ does the work that the unit Lipschitz bound does
	in Theorem~\ref{thm:main}.
\end{remark}

%%%%%%%%%%%%%%%%%%%%%%%%%%%%%%%%%%%%%%%%%%%%%%%
\subsection{Random maturity}\label{sec:russian-random}
%%%%%%%%%%%%%%%%%%%%%%%%%%%%%%%%%%%%%%%%%%%%%%%
We now combine the share-numeraire reduction above with the independent random
	maturity of Section~\ref{sec:random}.  
Let $\widetilde N$ take values in $\{0,1,\dots,N\}$, 
	be independent of the stock-price process under $\PP$, and
	satisfy $\PP(\widetilde N=N)>0$.  
The contract is void from calendar time $\widetilde N+1$ onward.  
The holder does not observe $\widetilde N$ before it
	occurs, so the scheduled exercise times are stopping times of the price filtration.  
With $m$ rights the random-maturity Russian option has value
	\begin{equation}\label{eq:russianrandomobj}
  		\sup_{\vec\sigma\in\T^{[m]}_0}\E\Big[\sum_{i:\,\sigma_i\le N}
     		\alpha^{\sigma_i}\mathbf 1_{\{\sigma_i\le\widetilde N\}}
		M_{\sigma_i}\Big].
	\end{equation}

The same change of numeraire remains available.  
Extend the measure $\Pt$ of Lemma~\ref{lem:numeraire} from $\F_N$ to
	$\mathcal{G}_N:=\F_N\vee\sigma(\widetilde N)$ by the density
	$\dd\Pt/\dd\PP=Z_N$.  
Since $Z_N$ is measurable with respect to the price
	path only, $\widetilde N$ has the same law under $\Pt$ as under $\PP$ and is
	still independent of the price path.  Indeed, for $A\in\F_N$ and
	$B\in\sigma(\widetilde N)$,
	\begin{align*}
  		 \Pt(A\cap B)=\E[Z_N\mathbf 1_A\mathbf 1_B]
   			=\E[Z_N\mathbf 1_A]\,\PP(B)=\Pt(A)\,\Pt(B).
	\end{align*}
Consequently, for every price-filtration stopping time $\sigma\le N$,
\begin{align}
  	\E\big[\alpha^\sigma M_\sigma
      		\mathbf 1_{\{\sigma\le\widetilde N\}}\big]
  	&=s_0\sum_{j=0}^{N}
      		\E\big[Z_jX_j\mathbf 1_{\{\sigma=j\}}\big]
     		 \PP(\widetilde N\ge j)\notag\\
  	&=s_0\sum_{j=0}^{N}
      		\E\big[Z_NX_j\mathbf 1_{\{\sigma=j\}}\big]
      		\PP(\widetilde N\ge j)\notag\\
  	&=s_0\,\Et\big[X_\sigma
     		 \mathbf 1_{\{\sigma\le\widetilde N\}}\big].
  \label{eq:russianrandomnumeraire}
\end{align}
Here the second equality uses $\E[Z_N\mid\F_j]=Z_j$, while the first and
	last use independence of $\widetilde N$ from the price path under the
	corresponding measure.  
Thus random maturity does not interfere with the
	one-dimensional reduction: it only kills future rewards.

Let the one-step conditional survival probabilities $\pi_{n-1}$ and the
	cumulative factors $\Pi_{n,k}$ be as in \eqref{eq:pi}--\eqref{eq:Pi}.  
At a calendar date $\nu=N-n$, conditional on the contract still being alive and on
	$X_\nu=x$, let $(X^x_k)_{k=0}^{n}$ denote the reflected chain under $\Pt$
	started from $x$.  
Define the reduced value
	\begin{equation}\label{eq:russianrandomvalue}
  		U^{[m]}_n(x):=\sup_{\vec\theta}
     	\Et_x\Big[\sum_{i:\,\theta_i\le n}\Pi_{n,\theta_i}\,X^x_{\theta_i}\Big],
	\end{equation}
	where $\vec\theta$ ranges over admissible vectors of elapsed stopping times,
	with unused rights sent to the non-exercise time. 
 By \eqref{eq:russianrandomnumeraire}, the value of
	\eqref{eq:russianrandomobj} is
	\begin{align*}
   		s_0\,U_N^{[m]}(m_0/s_0).
	\end{align*}

For $n\ge1$ define the survival-weighted one-step operator
	\begin{equation*}
   		\B_n:=\pi_{n-1}\B,\qquad
   		(\B_n\varphi)(x)=\pi_{n-1}\Big[ \widetilde p\,\varphi\big((\lambda^{-1}x)\vee1\big)
      	+\widetilde q\,\varphi(\lambda x)\Big].
	\end{equation*}
If $\varphi$ is $L$-Lipschitz, then $\B_n\varphi$ is
	$\alpha\pi_{n-1}L$-Lipschitz.  
In particular, since $r>0$,
	\begin{equation*}
   		\operatorname{Lip}(\B_n\varphi)
   		\le \alpha\pi_{n-1}\operatorname{Lip}(\varphi)
   		<\operatorname{Lip}(\varphi)
	\end{equation*}
	whenever $\operatorname{Lip}(\varphi)>0$, 
	where $\operatorname{Lip}(f)$ denotes the Lipschitz constant of the function $f$.

The dynamic programming equation is
	$U^{[m]}_0(x)=g(x)\ (m\ge1),\ U^{[0]}_n(x)\equiv0,$
	with, for $n\ge1$,
	\begin{equation}\label{eq:dprussianrandom2}
  		U^{[m]}_n(x)=\max\Big\{\,g(x)+(\B_n U^{[m-1]}_{n-1})(x),\
                  (\B_n U^{[m]}_{n-1})(x)\,\Big\},
  		\qquad g(x)=x.
	\end{equation}
Define
	\begin{equation*}
  		\Delta U^{[m]}_n(x):=U^{[m]}_n(x)-U^{[m-1]}_n(x),
 		 \qquad
 	 	h^{[m]}_n(x):=(\B_n \Delta U^{[m]}_{n-1})(x)\quad(n\ge1),
	\end{equation*}
	$h^{[m]}_0(x):=0,\ h^{[0]}_n(x):=+\infty.$
Then
	\begin{equation*}
  		U^{[m]}_n(x)=\max\{g(x),h^{[m]}_n(x)\}+(\B_n U^{[m-1]}_{n-1})(x),
	\end{equation*}
	and exercise is an optimal action exactly when $x\ge h^{[m]}_n(x)$.
%%%%%%%%%%%%%%%%%%%%%%%%%%%%%%%%%%%%%%%%%%%%%%%
\begin{proposition}%[Random-maturity Russian structure]
\label{prop:russianrandomstructure}
Assume $r>0$.  
For arbitrary one-step survival probabilities
	$\pi_j\in[0,1]$ and all $n\ge0$, $m\ge1$:
\begin{enumerate}[(i)]
	\item for $m\ge2$,
      		\begin{align*}
        	\Delta U^{[m]}_n(x)=\med\{h^{[m]}_n(x),\,g(x),\,h^{[m-1]}_n(x)\},
		\end{align*}
     		 while $\Delta U^{[1]}_n(x)=\max\{g(x),h^{[1]}_n(x)\}$;
	\item $\Delta U^{[m]}_n(\cdot)\in\Mcl^+$ and $h^{[m]}_n(\cdot)\in\Mcl^+$; moreover,
      		$h^{[m]}_n(x)$ is $\alpha\pi_{n-1}$-Lipschitz for $n\ge1$;
	\item the marginal values are diminishing in the number of rights:
      		\begin{align*}
       			 \Delta U^{[m+1]}_n(x)\le\Delta U^{[m]}_n(x),
        			\qquad
       			 h^{[m+1]}_n(x)\le h^{[m]}_n(x);
		\end{align*}
	\item $U^{[m]}_n(x)$ is nonnegative, convex, nondecreasing and
      		$L_m$-Lipschitz with $L_m=1+\alpha+\cdots+\alpha^{m-1}$.
\end{enumerate}
If, in addition, Assumption~\ref{ass:A2} holds, then
	\begin{equation}\label{eq:russianrandomtime}
  		 \Delta U^{[m]}_n(x)\le\Delta U^{[m]}_{n+1}(x),
   		\qquad
   		h^{[m]}_n(x)\le h^{[m]}_{n+1}(x).
	\end{equation}
\end{proposition}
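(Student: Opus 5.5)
The proof follows Proposition~\ref{prop:russianstructure}, with $\B$ replaced by the time-dependent operator $\B_n=\pi_{n-1}\B$. It is the same adaptation used to pass from Proposition~\ref{prop:structure} to Proposition~\ref{prop:structurerandom}.

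\emph{The operator $\B_n$.} First record what $\B_n$ inherits from Lemma~\ref{lem:Boperator}, since $\pi_{n-1}\in[0,1]$ is a nonnegative scalar. The operator $\B_n$ is order preserving and preserves nonnegativity and monotonicity. It preserves convexity of nondecreasing convex functions. It maps $L$-Lipschitz functions to $\alpha\pi_{n-1}L$-Lipschitz functions, so $\B_n(\Mcl^+)\subset\Mcl^+$.

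\emph{The median identity, item (i).} This is proved exactly as Lemma~\ref{lem:median}. Subtract the representation $U^{[m]}_n=\max\{g,h^{[m]}_n\}+\B_nU^{[m-1]}_{n-1}$ at levels $m$ and $m-1$, and use linearity of $\B_n$ to obtain $\B_n(U^{[m-1]}_{n-1}-U^{[m-2]}_{n-1})=h^{[m-1]}_n$. The three-case computation then applies, under the hypothesis $h^{[m]}_n\le h^{[m-1]}_n$. That hypothesis is supplied by the induction below, so (i)--(iii) are proved simultaneously by induction on $n$.

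\emph{Items (i)--(iii) by induction on $n$.} At $n=0$ we have $\Delta U^{[1]}_0=g\in\Mcl^+$, while $\Delta U^{[m]}_0=0$ for $m\ge2$ and $h^{[m]}_0=0$. For the step, assume $\Delta U^{[m]}_{n-1}\in\Mcl^+$ and that these marginal values are nonincreasing in $m$.
\begin{enumerate}[(a)]
\item Applying $\B_n$ gives $h^{[m]}_n\in\Mcl^+$, $\alpha\pi_{n-1}$-Lipschitz, with $h^{[m+1]}_n\le h^{[m]}_n$.
\item The median identity then holds at level $n$.
\item Closure of $\Mcl^+$ under $\max$, $\min$ and $\med$ gives $\Delta U^{[m]}_n\in\Mcl^+$. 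These are nondecreasing $1$-Lipschitz functions, and the argument of Lemma~\ref{lem:operator}(iv) applies verbatim.
\item The interval-projection comparison from the proof of Proposition~\ref{prop:structure} gives $\Delta U^{[m+1]}_n\le\Delta U^{[m]}_n$. It uses only monotonicity of $\min\{a,\max\{b,c\}\}$ in $a$ and $c$.
\end{enumerate}

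\emph{Item (iv).} Nonnegativity follows from $U^{[m]}_n=\sum_{j\le m}\Delta U^{[j]}_n$ with each summand $\ge0$. Convexity and monotonicity follow by induction on \eqref{eq:dprussianrandom2}: $g$ is convex and nondecreasing, $U^{[m]}_{n-1}$ is convex and nondecreasing by hypothesis, $\B_n$ preserves this class, and maxima and sums preserve it.

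For the Lipschitz constant we cannot simply sum the unit bounds of the marginals, since that gives $m$ rather than $L_m$. Instead use the recursive estimate
\begin{align*}
\operatorname{Lip}\big(U^{[m]}_n\big)\le\max\Big\{1+\alpha\pi_{n-1}\operatorname{Lip}\big(U^{[m-1]}_{n-1}\big),\ \alpha\pi_{n-1}\operatorname{Lip}\big(U^{[m]}_{n-1}\big)\Big\}.
\end{align*}
A double induction on $(n,m)$ with the hypothesis $\operatorname{Lip}(U^{[k]}_{n-1})\le L_k$ gives
\begin{align*}
\operatorname{Lip}\big(U^{[m]}_n\big)\le\max\{1+\alpha L_{m-1},\ \alpha L_m\}=L_m,
\end{align*}
because $1+\alpha L_{m-1}=L_m$ and $\alpha L_m\le L_m$, using $\pi_{n-1}\le1$. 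The base cases are $\operatorname{Lip}(U^{[m]}_0)=1\le L_m$ and $U^{[0]}\equiv0$.

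\emph{Monotonicity in $n$ under (A2).} Proceed by induction on $n$, as in Proposition~\ref{prop:structurerandom}. The base case is $\Delta U^{[1]}_0=g\le\max\{g,h^{[1]}_1\}=\Delta U^{[1]}_1$, and $\Delta U^{[m]}_0=0\le\Delta U^{[m]}_1$ for $m\ge2$. If $\Delta U^{[m]}_{n-1}\le\Delta U^{[m]}_n$ for all $m$, then (A2), positivity of $\B$, and $\Delta U\ge0$ give
\begin{align*}
h^{[m]}_{n+1}=\pi_n\B\Delta U^{[m]}_n\ge\pi_{n-1}\B\Delta U^{[m]}_n\ge\pi_{n-1}\B\Delta U^{[m]}_{n-1}=h^{[m]}_n.
\end{align*}
The max formula for $m=1$ and coordinatewise monotonicity of the median for $m\ge2$ then propagate the inequality to $\Delta U^{[m]}_n\le\Delta U^{[m]}_{n+1}$.

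The main point needing care is the order of steps within each induction stage: the ordering of the $h$'s must be established before the median identity is invoked. The rest transfers directly from the earlier proofs.
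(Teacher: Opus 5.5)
Your proposal is correct and follows the paper's proof essentially step for step. Both arguments run an induction on $n$ in which the ordering $h^{[m+1]}_n\le h^{[m]}_n$ is obtained from $\B_n=\pi_{n-1}\B$ before the median identity is invoked, both derive the $L_m$ bound from the recursive Lipschitz estimate by a double induction, and both handle monotonicity in $n$ under (A2) by the same max/median propagation; your version is only slightly more explicit, for instance in inserting the intermediate term $\pi_{n-1}\B\Delta U^{[m]}_n$, which correctly uses $\Delta U^{[m]}_n\ge0$.
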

%%%%%%%%%%%%%%%%%%%%%%%%%%%%%%%%%%%%%%%%%%%%%%%
\begin{proof}
The proof is the time-inhomogeneous analogue of
	Proposition~\ref{prop:russianstructure}.  
At $n=0$, $\Delta U^{[1]}_0(x)=g(\cdot)\in\Mcl^+$ and $\Delta U^{[m]}_0(x)=0$ for $m\ge2$.
Suppose the assertions hold at $n-1$.  Since $\B_n$ is order preserving and
	maps $\Mcl^+$ into itself, with Lipschitz gain
	$\alpha\pi_{n-1}\le\alpha<1$, we have
	$h^{[m]}_n(\cdot)\in\Mcl^+$ and
	$h^{[m+1]}_n(x)\le h^{[m]}_n(x)$.  
The maximum formula for $m=1$ and the median
	identity for $m\ge2$ then show that every $\Delta U^{[m]}_n(x)$ belongs to
	$\Mcl^+$; monotonicity of the interval projection in each argument gives
	$\Delta U^{[m+1]}_n(x)\le\Delta U^{[m]}_n(x)$.  
This proves (i)--(iii).

Convexity and monotonicity in (iv) follow by induction from
	\eqref{eq:dprussianrandom2}, because $\B_n$ preserves these properties on
	nondecreasing convex functions.  
The Lipschitz estimate follows from
	\begin{align*}
  		\operatorname{Lip}_x\big(U^{[m]}_n(x)\big)
  		\le\max\Big\{1+\alpha\pi_{n-1}\operatorname{Lip}_x\big(U^{[m-1]}_{n-1}(x)\big),\
      		\alpha\pi_{n-1}\operatorname{Lip}_x\big(U^{[m]}_{n-1}(x)\big) \Big\},
	\end{align*}
	and the bound $\pi_{n-1}\le1$, using the same double induction as in
	Proposition~\ref{prop:russianstructure}(v).

Finally assume (A2). 
If $\Delta U^{[m]}_{n-1}(x)\le\Delta U^{[m]}_n(x)$, then positivity of $\B$ and
	$\pi_n\ge\pi_{n-1}$ give
	\begin{align*}
 	 	h^{[m]}_{n+1}(x)=\pi_n(\B \Delta U^{[m]}_n)(x)
  			\ge\pi_{n-1}(\B \Delta U^{[m]}_{n-1})(x)=h^{[m]}_n(x).
	\end{align*}
The maximum/median representation then yields
	$\Delta U^{[m]}_n(x)\le\Delta U^{[m]}_{n+1}(x)$.  
The base step is immediate, so
	\eqref{eq:russianrandomtime} follows by induction.
\end{proof}
%%%%%%%%%%%%%%%%%%%%%%%%%%%%%%%%%%%%%%%%%%%%%%%
\begin{theorem}%[Russian option with random maturity]
\label{thm:russianrandom}
Assume $r>0$ and define
	\begin{equation*}
   		y^{[m]*}_n:=\inf\big\{x\ge1:\ x\ge h^{[m]}_n(x)\big\}.
	\end{equation*}
Then:
\begin{enumerate}[(i)]
\item $y^{[m]*}_n$ is well defined and finite, and the exercise region is the upper interval
      \begin{align*}
         	\bar D^{[m]}_{R,n} :=\{x\ge1:\ x\ge h^{[m]}_n(x)\}=[y^{[m]*}_n,\infty);
	\end{align*}
\item the boundaries are nested in the number of remaining rights:
      \begin{align*}
         	y^{[m+1]*}_n\le y^{[m]*}_n;
	\end{align*}
\item if (A2) holds, then the boundary is nondecreasing in the remaining time:
      	\begin{align*}
         	1=y^{[m]*}_0\le y^{[m]*}_1\le\cdots\le y^{[m]*}_N;
	\end{align*}
\item starting from calendar time $0$, schedule exercises recursively by the
      	first entrance into the corresponding upper exercise regions.  
	More precisely, with $\min\emptyset:=\partial$,
      	\begin{align*}
       	 	\sigma_m^*=\min\{\nu\in\{0,\dots,N\}:\
              X_\nu\ge y^{[m]*}_{N-\nu}\},
	\end{align*}
      and for $i=m-1,\dots,1$,
      \begin{align*}
        	\sigma_i^*=\min\{\nu\in\{0,\dots,N\}:\
              \nu>\sigma_{i+1}^*,\
              X_\nu\ge y^{[i]*}_{N-\nu}\}.
	\end{align*}
The schedule is optimal for \eqref{eq:russianrandomobj}; an exercise
      pays only on $\{\sigma_i^*\le\widetilde N\}$, and if random maturity
      occurs before the next scheduled exercise, all remaining rights expire.
\end{enumerate}
\end{theorem}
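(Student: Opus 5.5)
The plan follows the proof of Theorem~\ref{thm:russian}, with $f^{[m]}_n$ replaced by $h^{[m]}_n$ and with Proposition~\ref{prop:russianrandomstructure} in place of Proposition~\ref{prop:russianstructure}.

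\emph{Part (i).} For $n\ge1$, Proposition~\ref{prop:russianrandomstructure}(ii) says that $h^{[m]}_n$ is $\alpha\pi_{n-1}$-Lipschitz. Hence for $y<x$,
\begin{align*}
\big(x-h^{[m]}_n(x)\big)-\big(y-h^{[m]}_n(y)\big)\ge(1-\alpha\pi_{n-1})(x-y)\ge(1-\alpha)(x-y)>0,
\end{align*}
using $r>0$. So $x\mapsto x-h^{[m]}_n(x)$ is strictly increasing on $[1,\infty)$. Its growth is at least linear with slope $1-\alpha$, so it tends to $+\infty$. The set $\{x\ge1: x\ge h^{[m]}_n(x)\}$ is therefore nonempty. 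It is closed because $h^{[m]}_n$ is continuous. Being an upper set, it equals $[y^{[m]*}_n,\infty)$ with $y^{[m]*}_n<\infty$. For $n=0$ we have $h^{[m]}_0\equiv0$, so the region is all of $[1,\infty)$ and $y^{[m]*}_0=1$.

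\emph{Parts (ii) and (iii).} By Proposition~\ref{prop:russianrandomstructure}(iii), $h^{[m+1]}_n\le h^{[m]}_n$. Hence $\{x\ge h^{[m]}_n(x)\}\subseteq\{x\ge h^{[m+1]}_n(x)\}$, and so $y^{[m+1]*}_n\le y^{[m]*}_n$. Under (A2), \eqref{eq:russianrandomtime} gives $h^{[m]}_{n+1}\ge h^{[m]}_n$, which shrinks the exercise set as $n$ grows. Hence $y^{[m]*}_{n+1}\ge y^{[m]*}_n$, and the chain starts at $y^{[m]*}_0=1$.

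\emph{Part (iv).} This is the step that needs care. First I use \eqref{eq:russianrandomnumeraire}, applied to each finite component and summed. This rewrites \eqref{eq:russianrandomobj} as $s_0$ times
\begin{align*}
\sup_{\vec\sigma}\Et\Big[\sum_{i:\,\sigma_i\le N}\PP(\widetilde N\ge\sigma_i)\,X_{\sigma_i}\Big].
\end{align*}
Here independence of $\widetilde N$ from the price path under $\Pt$ has already been established. Since $\Pt\sim\PP$, the admissible vectors are the same under both measures.

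This is a multiple stopping problem for the Markov chain $X$ with the time-dependent deterministic weight $\PP(\widetilde N\ge\nu)$. That weight plays the role of $\alpha^\nu$ in Section~\ref{sec:markov}. Theorem~\ref{thm:multiple} applies to the reward $X'_\nu:=\PP(\widetilde N\ge\nu)X_\nu$, which is adapted and integrable: $\Et[\max_\nu X_\nu]\le\lambda^NX_0$. It yields the recursive Snell envelopes together with the optimality of the recursive first-entrance vector.

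The Markov reduction has to be done with care. Dividing the Snell envelopes by $\PP(\widetilde N\ge\nu)$, which is positive since $\PP(\widetilde N=N)>0$, turns the ratio of consecutive weights into $\pi_{n-1}$. Therefore the normalized envelopes are exactly $U^{[m]}_{N-\nu}(X_\nu)$ satisfying \eqref{eq:dprussianrandom2}, and this also matches the conditional-on-survival definition \eqref{eq:russianrandomvalue} with the cumulative factors $\Pi_{n,k}$.

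At each state, the action "exercise iff $X_\nu\ge y^{[i]*}_{N-\nu}$" attains the maximum in \eqref{eq:dprussianrandom2}. Indeed, by part (i) this condition is equivalent to $x\ge h^{[i]}_{N-\nu}(x)$, and the identity $U=\max\{g,h\}+\B_nU^{[m-1]}$ shows that exercise is optimal exactly there. So backward induction, equivalently the recursive definition in Definition~\ref{def:recursive}, gives the optimality of $(\sigma^*_m,\dots,\sigma^*_1)$. The finite entries are strictly increasing and unused rights sit at $\partial$.

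The statements about payment only on $\{\sigma_i^*\le\widetilde N\}$ and the expiry of all remaining rights are just the indicator in \eqref{eq:russianrandomobj}. The main obstacle is therefore the bookkeeping that identifies the normalized Snell envelopes with $U^{[m]}_n$. Everything else is inherited from Proposition~\ref{prop:russianrandomstructure}.
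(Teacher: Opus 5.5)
Your proposal is correct and follows essentially the same route as the paper. Parts (i)--(iii) use the $\alpha\pi_{n-1}$-Lipschitz bound and the orderings of $h^{[m]}_n$ from Proposition~\ref{prop:russianrandomstructure} exactly as the paper does, and in (iv) your normalization of the Snell envelopes of the survival-weighted reward $\PP(\widetilde N\ge\nu)X_\nu$ under $\Pt$ is an explicit version of the paper's remark that successive applications of $\B_n$ generate the factors $\Pi_{n,k}$, combined with backward induction and \eqref{eq:russianrandomnumeraire}; your extra details (closedness of the exercise set via continuity, and the identification of first entrance into $\{x\ge h^{[i]}_n(x)\}$ with Definition~\ref{def:recursive}) fill in points the paper leaves implicit.
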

%%%%%%%%%%%%%%%%%%%%%%%%%%%%%%%%%%%%%%%%%%%%%%%
\begin{proof}
For $n=0$, $h^{[m]}_0(x)=0$, so $y^{[m]*}_0=1$.  For $n\ge1$,
Proposition~\ref{prop:russianrandomstructure}(ii) gives
	\begin{align*}
  		 |h^{[m]}_n(x)-h^{[m]}_n(y)|
  		 \le\alpha\pi_{n-1}|x-y|.
	\end{align*}
Hence, for $1\le y<x$,
	\begin{align*}
 		\big(x-h^{[m]}_n(x)\big)-\big(y-h^{[m]}_n(y)\big)
			 \ge(1-\alpha\pi_{n-1})(x-y)>0.
	\end{align*}
Thus $x\mapsto x-h^{[m]}_n(x)$ is strictly increasing.  
Since $h^{[m]}_n(x)$ has at most linear growth with slope
	$\alpha\pi_{n-1}<1$, this difference tends to $+\infty$ as $x\to\infty$;
	therefore the exercise set is a nonempty upper interval, proving (i).

Part (ii) follows from $h^{[m+1]}_n(x)\le h^{[m]}_n(x)$.  
Under (A2),Proposition~\ref{prop:russianrandomstructure} gives
	$h^{[m]}_{n+1}(x)\ge h^{[m]}_n(x)$, and therefore
	$\{x:x\ge h^{[m]}_{n+1}(x)\}\subseteq
	 \{x:x\ge h^{[m]}_n(x)\}$, proving (iii).

Finally, at every live state the boundary action in (iv) attains the maximum
	in \eqref{eq:dprussianrandom2}.  
Successive applications of $\B_n$ generate
	exactly the cumulative survival factors $\Pi_{n,k}$ in
	\eqref{eq:russianrandomvalue}.  
Finite-horizon backward induction therefore
	proves optimality of the scheduled boundary rule, and
	\eqref{eq:russianrandomnumeraire} returns the corresponding value in the
	original numeraire.
\end{proof}
%%%%%%%%%%%%%%%%%%%%%%%%%%%%%%%%%%%%%%%%%%%%%%%
\begin{corollary}%[Fixed versus random maturity for the Russian option]
\label{cor:russianrandomcomparison}
Let $V^{[m]}_n(x)$, $f^{[m]}_n(x)$ and $x^{[m]*}_n$ denote the fixed-maturity Russian
	quantities of Proposition~\ref{prop:russianstructure} and
Theorem~\ref{thm:russian}.  Then, for every $m,n$,
	\begin{equation*}
   		U^{[m]}_n(x)\le V^{[m]}_n(x),
   		\qquad
  	 	h^{[m]}_n(x)\le f^{[m]}_n(x),
  	 	\qquad
   		y^{[m]*}_n\le x^{[m]*}_n.
	\end{equation*}
Thus random maturity lowers the Russian-option value and enlarges its exercise
	region; because the stopping region is an upper half-line, enlargement appears
	as a \emph{lower} exercise boundary.
\end{corollary}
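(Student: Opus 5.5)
The plan mirrors the proof of Corollary~\ref{cor:comparison}, now working with the operators $\B$ and $\B_n=\pi_{n-1}\B$ on $[1,\infty)$. The key claim, proved by induction on $n$ simultaneously for all $m\ge1$, is the marginal comparison
\begin{align*}
  \Delta U^{[m]}_n(x)\le \Delta V^{[m]}_n(x),\qquad x\ge1 .
\end{align*}
At $n=0$ both marginal values equal $g(x)=x$ when $m=1$ and $0$ when $m\ge2$, so equality holds.

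For the inductive step, assume the claim at $n-1$ for every $m$. Since $\Delta U^{[m]}_{n-1}\ge0$ by Proposition~\ref{prop:russianrandomstructure}(ii), $\pi_{n-1}\le1$, and $\B$ is order preserving (Lemma~\ref{lem:Boperator}), we get
\begin{align*}
  h^{[m]}_n(x)=\pi_{n-1}(\B\Delta U^{[m]}_{n-1})(x)\le(\B\Delta U^{[m]}_{n-1})(x)\le(\B\Delta V^{[m]}_{n-1})(x)=f^{[m]}_n(x).
\end{align*}
This also holds trivially for the conventions $h^{[0]}_n=f^{[0]}_n=+\infty$. We then transfer the inequality to the marginal values:
\begin{itemize}
\item For $m=1$, use $\Delta U^{[1]}_n=\max\{g,h^{[1]}_n\}\le\max\{g,f^{[1]}_n\}=\Delta V^{[1]}_n$.
\item For $m\ge2$, use the median representations of Proposition~\ref{prop:russianrandomstructure}(i) and Proposition~\ref{prop:russianstructure}(i). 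The median is nondecreasing in each argument, and we have $h^{[m]}_n\le f^{[m]}_n$ and $h^{[m-1]}_n\le f^{[m-1]}_n$, which gives $\Delta U^{[m]}_n\le\Delta V^{[m]}_n$.
\end{itemize}
Both median representations are legitimate because the orderings $h^{[m]}_n\le h^{[m-1]}_n$ and $f^{[m]}_n\le f^{[m-1]}_n$ are supplied by the two propositions. This closes the induction, and the displayed inequality $h^{[m]}_n\le f^{[m]}_n$ is obtained along the way for every $n\ge1$; it is trivial for $n=0$.

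The value comparison follows by telescoping: $U^{[m]}_n=\sum_{j=1}^m\Delta U^{[j]}_n\le\sum_{j=1}^m\Delta V^{[j]}_n=V^{[m]}_n$, using $U^{[0]}=V^{[0]}=0$.

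For the boundaries, $h^{[m]}_n\le f^{[m]}_n$ gives the set inclusion $\{x\ge1:x\ge f^{[m]}_n(x)\}\subseteq\{x\ge1:x\ge h^{[m]}_n(x)\}$. Both sets are upper half-lines by Theorem~\ref{thm:russian}(i) and Theorem~\ref{thm:russianrandom}(i), so taking infima yields $y^{[m]*}_n\le x^{[m]*}_n$.

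There is no serious obstacle. The only point needing care is the inductive step: it must rely solely on positivity and monotonicity of $\B$ and on the nonnegativity of $\Delta U$, so that the factor $\pi_{n-1}\le1$ can be dropped. Notably, no assumption such as (A2) is used, since no comparison across different $n$ is involved.
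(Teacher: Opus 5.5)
Your proposal is correct and follows the paper's own proof essentially step for step. Both run an induction on $n$ for the marginal comparison $\Delta U^{[m]}_n\le\Delta V^{[m]}_n$, pass it through $h^{[m]}_n\le f^{[m]}_n$ and the monotonicity of the max/median representations, telescope to get the values, and use the set inclusion to get the boundaries. Your added details make explicit what the paper leaves implicit: you use the nonnegativity of $\B\Delta U^{[m]}_{n-1}$ to drop the factor $\pi_{n-1}\le1$, and you check that the median representations are legitimate. Note also that the inclusion of sets alone already gives $y^{[m]*}_n\le x^{[m]*}_n$, so the half-line structure is not actually needed for that step.
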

%%%%%%%%%%%%%%%%%%%%%%%%%%%%%%%%%%%%%%%%%%%%%%%
\begin{proof}
We prove simultaneously by induction on $n$ that
$\Delta U^{[m]}_n(x)\le\Delta V^{[m]}_n(x)$ for every $m$.  At $n=0$ equality holds.
If the claim holds at $n-1$, then
	\begin{align*}
   		h^{[m]}_n(x)=\pi_{n-1}(\B \Delta U^{[m]}_{n-1})(x)
   		\le(\B \Delta V^{[m]}_{n-1})(x)=f^{[m]}_n(x).
	\end{align*}
For $m=1$ the maximum representation gives
$\Delta U^{[1]}_n(x)\le\Delta V^{[1]}_n(x)$; for $m\ge2$ the same conclusion follows
	from monotonicity of the median in each argument.  
Summing the marginal inequalities gives $U^{[m]}_n(x)\le V^{[m]}_n(x)$.  
Finally,
	$h^{[m]}_n(x)\le f^{[m]}_n(x)$ implies
	$[x^{[m]*}_n,\infty)\subseteq[y^{[m]*}_n,\infty)$, hence
	$y^{[m]*}_n\le x^{[m]*}_n$.
\end{proof}
%%%%%%%%%%%%%%%%%%%%%%%%%%%%%%%%%%%%%%%%%%%%%%%
\begin{corollary}\label{cor:russianrandomexamples}
For each of the maturity distributions in
	Examples~\ref{ex:uniform}--\ref{ex:poisson}, Assumption~\ref{ass:A2} holds.
Hence the random-maturity Russian boundaries satisfy both the nesting in the
	number of rights and the monotonicity in the remaining time asserted in
	Theorem~\ref{thm:russianrandom}.
\end{corollary}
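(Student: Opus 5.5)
The corollary is a direct combination of two facts already established, so the argument has two steps.

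First, I will check that (A2) holds for each of the three distributions in Examples~\ref{ex:uniform}--\ref{ex:poisson}.
\begin{itemize}
\item For the uniform law, Example~\ref{ex:uniform} computes $\pi_{n-1}=n/(n+1)$ explicitly, and this is increasing in $n$.
\item For the truncated geometric and truncated Poisson laws, the examples reduce (A2) to log-concavity of the tail sums $T_k=\sum_{j\ge k}\PP(\widetilde N=j)$. The reduction goes as follows. By \eqref{eq:pi}, $\pi_{n-1}=T_{N-n+1}/T_{N-n}$. Writing $k=N-n$, the inequality $\pi_{n-1}\le\pi_n$ reads $T_{k+1}/T_k\le T_k/T_{k-1}$, which is $T_k^2\ge T_{k-1}T_{k+1}$.
\item All $T_k$ are positive, because every mass $\PP(\widetilde N=j)$ is positive in these examples. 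So the division is legitimate and $\PP(\widetilde N=N)>0$ holds.
\item The geometric case is then the AM--GM computation of Example~\ref{ex:geometric}.
\item The Poisson case follows from log-concavity of $\theta^k/k!$ together with preservation of log-concavity under convolution with the constant sequence (Stanley), exactly as in Example~\ref{ex:poisson}.
\end{itemize}
I will simply cite these computations.

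Second, once (A2) is known, I invoke Theorem~\ref{thm:russianrandom}.
\begin{itemize}
\item Part (ii) gives $y^{[m+1]*}_n\le y^{[m]*}_n$ unconditionally, for arbitrary $\pi_j$.
\item Part (iii) gives $1=y^{[m]*}_0\le\cdots\le y^{[m]*}_N$ under (A2).
\item Part (iii) rests on $h^{[m]}_n\le h^{[m]}_{n+1}$ from \eqref{eq:russianrandomtime} in Proposition~\ref{prop:russianrandomstructure}.
\end{itemize}
This gives both assertions of the corollary.

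The only point needing care is index bookkeeping: making sure that (A2), which is stated in the remaining-time index $n$, corresponds to log-concavity of $T_k$ in calendar time $k=N-n$. The direction of the inequality must come out right; as written above, it does.
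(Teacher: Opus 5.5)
Your proposal is correct and is exactly the argument the paper intends (the corollary is stated without a separate proof): (A2) is read off from Examples~\ref{ex:uniform}--\ref{ex:poisson}, after which Theorem~\ref{thm:russianrandom}(ii) gives the nesting $y^{[m+1]*}_n\le y^{[m]*}_n$ for arbitrary survival probabilities, and Theorem~\ref{thm:russianrandom}(iii) gives the monotonicity in remaining time under (A2). Your index check (with $k=N-n$, $\pi_{n-1}\le\pi_n$ is equivalent to $T_k^2\ge T_{k-1}T_{k+1}$) and your positivity check on the $T_k$ are right; the only hypothesis you use without stating it is the standing assumption $r>0$ of Theorem~\ref{thm:russianrandom}.
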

%%%%%%%%%%%%%%%%%%%%%%%%%%%%%%%%%%%%%%%%%%%%%%%
%\begin{remark}%[Geometric killing]
%\label{rem:russiangeometric}
%Suppose, more specifically, that the contract survives each step independently
%	with a constant conditional probability $\pi\in(0,1]$ until the deterministic
%	cap $N$; equivalently,
%	\begin{align*}
%		\PP(\widetilde N=k)=(1-\pi)\pi^k\quad(0\le k<N),
%  		\qquad \PP(\widetilde N=N)=\pi^N.
%	\end{align*}
%Then $\pi_{n-1}\equiv\pi$ and
%	\begin{align*}
% 		\B_n=\pi\B, \qquad\operatorname{Lip}(\B_n)=\alpha\pi.
%	\end{align*}
%Thus independent geometric killing acts in the reduced Russian problem exactly
%	like an additional one-step discount factor.  
%The fixed-maturity model is recovered at $\pi=1$.
%\end{remark}

%%%%%%%%%%%%%%%%%%%%%%%%%%%%%%%%%%%%%%%%%%%%%%%
\section{Geometric-average Asian put}%: ratio reduction and multiple exercise}
\label{sec:asian}
%%%%%%%%%%%%%%%%%%%%%%%%%%%%%%%%%%%%%%%%%%%%%%%
The preceding two option families become one-dimensional after a suitable
	change of numeraire.  
The same idea also applies to a floating-strike Asian put
	when the running average is geometric.  
The resulting state process is no longer time-homogeneous, but its transition is explicit.  
This section gives the exact finite-lattice recursion, the marginal-value representation, 
	and the corresponding independent-random-maturity extension.

%%%%%%%%%%%%%%%%%%%%%%%%%%%%%%%%%%%%%%%%%%%%%%%
\subsection{Running geometric average and share-numeraire reduction}
%%%%%%%%%%%%%%%%%%%%%%%%%%%%%%%%%%%%%%%%%%%%%%%
For $\nu=0,\dots,N$ define the discrete running geometric average
	\begin{equation*}
   		G_\nu:=\left(\prod_{j=0}^{\nu}S_j\right)^{1/(\nu+1)}
   		=\exp\left\{\frac{1}{\nu+1}\sum_{j=0}^{\nu}\log S_j\right\}.
	\end{equation*}
A floating-strike geometric-average Asian put pays
	$(G_\nu-S_\nu)^+$ when a right is exercised at date $\nu$.  
Put
	\begin{equation*}
   		R_\nu:=\frac{G_\nu}{S_\nu}>0,\qquad g_A(x):=(x-1)^+.
	\end{equation*}
Then $(G_\nu-S_\nu)^+=S_\nu g_A(R_\nu)$.  
Hence the share measure $\Pt$ of Lemma~\ref{lem:numeraire} gives, 
	for every bounded stopping time $\sigma$,
	\begin{equation*}
  		\E\big[\alpha^\sigma(G_\sigma-S_\sigma)^+\big]
  		=s_0\,\Et\big[g_A(R_\sigma)\big].
	\end{equation*}
Thus the discount factor disappears after the numeraire change, exactly as for
	the Russian option.

The state $R$ is time-inhomogeneous.  
Set
	\begin{equation*}
   		a_\nu:=\frac{\nu+1}{\nu+2},\qquad \ell:=\log\lambda .
	\end{equation*}
%%%%%%%%%%%%%%%%%%%%%%%%%%%%%%%%%%%%%%%%%%%%%%%
\begin{lemma}%[Exact Asian transition]
\label{lem:asiantransition}
Under $\Pt$,
	\begin{equation}\label{eq:asiantransition}
  	R_{\nu+1}=R_\nu^{a_\nu}\lambda^{-a_\nu\varepsilon_{\nu+1}}
  		=\begin{cases}
     			\lambda^{-a_\nu}R_\nu^{a_\nu},&\varepsilon_{\nu+1}=+1,\\
     			\lambda^{ a_\nu}R_\nu^{a_\nu},&\varepsilon_{\nu+1}=-1,
   		\end{cases}
	\end{equation}
	with probabilities $\widetilde p$ and $\widetilde q$ from
	Lemma~\ref{lem:numeraire}.  More generally, for $0\le k<j\le N$,
	\begin{equation}\label{eq:asianmultistep}
  		\log R_j =\frac{k+1}{j+1}\log R_k-\frac{\ell}{j+1}\sum_{r=k+1}^{j}r\varepsilon_r .
	\end{equation}
Consequently $R$ is a one-dimensional time-inhomogeneous Markov chain.
\end{lemma}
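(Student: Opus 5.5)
The plan is to work entirely with logarithms. Put $L_\nu:=\sum_{j=0}^{\nu}\log S_j$ and $Y_\nu:=\log R_\nu=L_\nu/(\nu+1)-\log S_\nu$. Since $\log S_{\nu+1}=\log S_\nu+\ell\varepsilon_{\nu+1}$ and $L_{\nu+1}=L_\nu+\log S_{\nu+1}$, we can substitute $L_\nu=(\nu+1)(Y_\nu+\log S_\nu)$. This gives
\begin{align*}
Y_{\nu+1}&=\frac{(\nu+1)(Y_\nu+\log S_\nu)+\log S_{\nu+1}}{\nu+2}-\log S_{\nu+1}\\
&=a_\nu Y_\nu+a_\nu\big(\log S_\nu-\log S_{\nu+1}\big)=a_\nu Y_\nu-a_\nu\ell\varepsilon_{\nu+1}.
\end{align*}
Exponentiating yields \eqref{eq:asiantransition}. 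The case split is just $\varepsilon_{\nu+1}=\pm1$. The probabilities of the two cases under $\Pt$ are $\widetilde p,\widetilde q$, because Lemma~\ref{lem:numeraire}(i) states that the $\varepsilon_\nu$ are i.i.d.\ under $\Pt$ with these weights.

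For \eqref{eq:asianmultistep}, I will unroll the recursion by induction on $j$ with $k$ fixed, using the weighted quantity $(j+1)Y_j$. The one-step identity multiplied by $\nu+2$ reads
\begin{align*}
(\nu+2)Y_{\nu+1}=(\nu+1)Y_\nu-(\nu+1)\ell\varepsilon_{\nu+1}.
\end{align*}
Summing this telescoping relation from $\nu=k$ to $j-1$ gives $(j+1)Y_j=(k+1)Y_k-\ell\sum_{r=k+1}^{j}r\varepsilon_r$, with the index shift $r=\nu+1$. Dividing by $j+1$ gives the claim. The only point needing care is the weight $r$ versus $r-1$, and the shift above settles it: the coefficient of $\varepsilon_{\nu+1}$ is $\nu+1=r$.

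For the Markov property, \eqref{eq:asiantransition} expresses $R_{\nu+1}$ as a deterministic, time-dependent function of $R_\nu$ and $\varepsilon_{\nu+1}$. Under $\Pt$, the increment $\varepsilon_{\nu+1}$ is independent of $\F_\nu$, since the increments are i.i.d.\ and adapted to the filtration they generate. Hence
\[
\Et[\varphi(R_{\nu+1})\mid\F_\nu]=\widetilde p\,\varphi(\lambda^{-a_\nu}R_\nu^{a_\nu})+\widetilde q\,\varphi(\lambda^{a_\nu}R_\nu^{a_\nu}),
\]
which is a function of $(\nu,R_\nu)$ alone. The chain is time-inhomogeneous only through $a_\nu$.

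There is no real obstacle here. The main thing to verify is the algebraic elimination of $L_\nu$ in the first step, in particular that the $\log S_\nu$ terms combine to $-a_\nu\ell\varepsilon_{\nu+1}$.
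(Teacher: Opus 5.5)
Your proposal is correct and follows essentially the paper's route. The paper starts from $G_{\nu+1}^{\nu+2}=G_\nu^{\nu+1}S_{\nu+1}$, which is the exponentiated form of your $L_{\nu+1}=L_\nu+\log S_{\nu+1}$, derives the one-step relation from it, and then multiplies the logarithmic recursion by $\nu+2$ and telescopes, exactly as you do. Your explicit conditional-expectation computation for the Markov property simply spells out what the paper leaves as ``consequently.''
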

%%%%%%%%%%%%%%%%%%%%%%%%%%%%%%%%%%%%%%%%%%%%%%%
\begin{proof}
From $G_{\nu+1}^{\nu+2}=G_\nu^{\nu+1}S_{\nu+1}$ and
$S_{\nu+1}=S_\nu\lambda^{\varepsilon_{\nu+1}}$,
\begin{align*}
 	\frac{G_{\nu+1}}{S_{\nu+1}}=\left(\frac{G_\nu}{S_\nu}\right)^{(\nu+1)/(\nu+2)}
   	\lambda^{-(\nu+1)\varepsilon_{\nu+1}/(\nu+2)},
\end{align*}
	which is \eqref{eq:asiantransition}.  
Multiplying the logarithmic recursion by $\nu+2$ and iterating gives \eqref{eq:asianmultistep}.
\end{proof}

For a nonnegative measurable function $\varphi$ define the one-step Asian operator
\begin{equation}\label{eq:asianoperator}
 	(\mathcal C_\nu\varphi)(x):=\widetilde p\,\varphi\!\left(\lambda^{-a_\nu}x^{a_\nu}\right)
   		+\widetilde q\,\varphi\!\left(\lambda^{ a_\nu}x^{a_\nu}\right),
 	\qquad 0\le\nu<N.
\end{equation}
It is positive and order preserving, and it preserves monotonicity because both
	state maps in \eqref{eq:asianoperator} are increasing.  
Formula \eqref{eq:asianmultistep} also gives an exact finite-sum representation for any
	multi-step transition.  
This is the discrete-time counterpart of the explicit Gaussian transition available 
	for the continuous-time geometric-average model.

%%%%%%%%%%%%%%%%%%%%%%%%%%%%%%%%%%%%%%%%%%%%%%%
\subsection{Multiple stopping, marginal values, and nested exercise sets}
%%%%%%%%%%%%%%%%%%%%%%%%%%%%%%%%%%%%%%%%%%%%%%%
Let $J_\nu^{[m]}(x)$ be the normalized value at calendar time $\nu$ when
	$R_\nu=x$ and $m$ rights remain.  
The original monetary value at time zero is $s_0J_0^{[m]}(1)$ because $G_0=S_0$.  
Since at most one right may be used at a date,
	\begin{equation*}
	 	J_N^{[m]}(x)=g_A(x)\quad(m\ge1),\qquad J_\nu^{[0]}(x)\equiv0,
	\end{equation*}
	and, for $0\le\nu<N$,
	\begin{equation}\label{eq:asiandp2}
		 J_\nu^{[m]}(x)=\max\Big\{g_A(x)+(\mathcal C_\nu J_{\nu+1}^{[m-1]})(x),\,
             (\mathcal C_\nu J_{\nu+1}^{[m]})(x)\Big\}.
	\end{equation}
Define the marginal value and the continuation premium by
	\begin{equation*}
		 \Delta J_\nu^{[m]}(x):=J_\nu^{[m]}(x)-J_\nu^{[m-1]}(x),\qquad
 			c_\nu^{[m]}(x):=(\mathcal C_\nu \Delta J_{\nu+1}^{[m]})(x)
 			\quad(\nu<N),
	\end{equation*}
	with $c_N^{[m]}(x):=0$ and $c_\nu^{[0]}(x):=+\infty$.  
Then
	\begin{equation}\label{eq:asiandp3}
 		J_\nu^{[m]}(x)=\max\{g_A(x),c_\nu^{[m]}(x)\}+(\mathcal C_\nu J_{\nu+1}^{[m-1]})(x).
	\end{equation}
%%%%%%%%%%%%%%%%%%%%%%%%%%%%%%%%%%%%%%%%%%%%%%%
\begin{proposition}%[Asian median identity and diminishing marginal values]
\label{prop:asianstructure}
For every $0\le\nu\le N$ and $m\ge1$:
\begin{enumerate}[(i)]
	\item $J_\nu^{[m]}(x)$, $\Delta J_\nu^{[m]}(x)$ and $c_\nu^{[m]}(x)$ are nonnegative
      		(where $c_N^{[m]}(x)=0$), and they are nondecreasing functions of the state;
	\item for $m\ge2$,
     	 \begin{equation}\label{eq:asianmedian}
        	\Delta J_\nu^{[m]}(x)=\med\{c_\nu^{[m]}(x),\,g_A(x),\,c_\nu^{[m-1]}(x)\},
      \end{equation}
      while $\Delta J_\nu^{[1]}(x)=\max\{g_A(x),c_\nu^{[1]}(x)\}$;
\item marginal values diminish with the number of rights:
      \begin{equation*}
       	 	\Delta J_\nu^{[m+1]}(x)\le\Delta J_\nu^{[m]}(x),
        	\qquad
        	c_\nu^{[m+1]}(x)\le c_\nu^{[m]}(x);
      \end{equation*}
\item the exercise sets
      \begin{equation*}
        	D_{A,\nu}^{[m]}:=\{x>0:g_A(x)\ge c_\nu^{[m]}(x)\}
      \end{equation*}
      are nested:
      	$D_{A,\nu}^{[m]}\subseteq D_{A,\nu}^{[m+1]}$;
\item all values are finite and have at most linear growth.  Moreover, for
      $\nu<N$,
      \begin{equation}\label{eq:asiansublinear}
          	c_\nu^{[m]}(x)=O(x^{a_\nu})=o(x)\qquad(x\to\infty).
      \end{equation}
\end{enumerate}
\end{proposition}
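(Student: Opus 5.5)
The plan is a backward induction in calendar time $\nu$, from $\nu=N$ down to $0$. It parallels the proofs of Proposition~\ref{prop:structure} and Proposition~\ref{prop:russianstructure}. The only changes are that the operator $\mathcal C_\nu$ now depends on $\nu$ and that no Lipschitz control is claimed. The induction hypothesis at level $\nu+1$ is the following: for every $m\ge1$, $\Delta J^{[m]}_{\nu+1}$ is nonnegative, nondecreasing and of at most linear growth, and $\Delta J^{[m+1]}_{\nu+1}\le\Delta J^{[m]}_{\nu+1}$.

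The base case is $\nu=N$. There $\Delta J^{[1]}_N=g_A$, $\Delta J^{[m]}_N=0$ for $m\ge2$, and $c^{[m]}_N=0$. All of (i)--(iii) are then immediate, with (ii) following from $\med\{0,g_A,0\}=0$.

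For the inductive step at $\nu<N$, I would proceed in the same order as for the put.

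\emph{Properties of $c^{[m]}_\nu$.} Since $\mathcal C_\nu$ is positive, order preserving and monotonicity preserving, $c^{[m]}_\nu=\mathcal C_\nu\Delta J^{[m]}_{\nu+1}$ is nonnegative and nondecreasing. The ordering $\Delta J^{[m+1]}_{\nu+1}\le\Delta J^{[m]}_{\nu+1}$ then gives $c^{[m+1]}_\nu\le c^{[m]}_\nu$.

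\emph{Median identity (ii).} Subtract \eqref{eq:asiandp3} at levels $m$ and $m-1$. Using $\mathcal C_\nu(J^{[m-1]}_{\nu+1}-J^{[m-2]}_{\nu+1})=c^{[m-1]}_\nu$, this gives
\[
\Delta J^{[m]}_\nu=\max\{g_A,c^{[m]}_\nu\}-\max\{g_A,c^{[m-1]}_\nu\}+c^{[m-1]}_\nu .
\]
The three-case argument of Lemma~\ref{lem:median} is purely pointwise, so it applies verbatim and yields the median. For $m=1$ one uses $J^{[0]}\equiv0$.

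\emph{Monotonicity (i).} The median and the maximum are monotone in each argument, and $g_A$ is nonnegative and nondecreasing. Hence $\Delta J^{[m]}_\nu$ is nonnegative and nondecreasing, and so is $J^{[m]}_\nu=\sum_{j\le m}\Delta J^{[j]}_\nu$.

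\emph{Diminishing marginal values (iii).} The interval-projection comparison from Proposition~\ref{prop:structure} gives
\[
\min\{c^{[m]}_\nu,\max\{g_A,c^{[m+1]}_\nu\}\}\le\min\{c^{[m-1]}_\nu,\max\{g_A,c^{[m]}_\nu\}\}.
\]
For $m=1$ one compares directly with $\max\{g_A,c^{[1]}_\nu\}$. This closes the induction.

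\emph{Nesting (iv).} This is immediate from $c^{[m+1]}_\nu\le c^{[m]}_\nu$: if $g_A(x)\ge c^{[m]}_\nu(x)$, then $g_A(x)\ge c^{[m+1]}_\nu(x)$.

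\emph{Growth (v).} I would add to the induction hypothesis the bound $0\le\Delta J^{[m]}_{\nu+1}(x)\le A_{\nu+1}(1+x)$ for a constant independent of $m$. This propagates because the median lies between its smallest and largest arguments. Indeed, with $y_\pm=\lambda^{\pm a_\nu}x^{a_\nu}$ we get $c^{[m]}_\nu(x)\le A_{\nu+1}(1+\max\{y_+,y_-\})\le A_{\nu+1}(1+\lambda x^{a_\nu})$. This is $O(x^{a_\nu})$, and it is $o(x)$ because $a_\nu<1$; this proves \eqref{eq:asiansublinear}. Consequently $\Delta J^{[m]}_\nu\le\max\{g_A,c^{[1]}_\nu\}\le A_\nu(1+x)$, and finiteness follows, with $J^{[m]}_\nu\le mA_\nu(1+x)$.

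The main obstacle is only bookkeeping: the operator changes with $\nu$, and Lipschitz estimates are unavailable, so linear growth must be carried explicitly through the induction rather than through a gain constant. Note also that the sublinear estimate for $c$ uses the concave power $x\mapsto x^{a_\nu}$ of the state maps and must exclude $\nu=N$, where $c_N\equiv0$ trivially.
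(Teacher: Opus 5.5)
Your proposal is correct and follows essentially the same route as the paper. The paper also uses backward induction in $\nu$: it gets the ordering and monotonicity of $c^{[m]}_\nu$ from positivity and order preservation of $\mathcal C_\nu$, derives the median identity by subtracting \eqref{eq:asiandp3} at levels $m$ and $m-1$ exactly as in Lemma~\ref{lem:median}, and obtains (iii)--(v) from monotonicity of the median and the $x^{a_\nu}$ growth of the state maps. Your explicit constant $A_\nu$, uniform in $m$ and carried through the induction, simply makes precise the linear-growth induction that the paper states more briefly.
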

%%%%%%%%%%%%%%%%%%%%%%%%%%%%%%%%%%%%%%%%%%%%%%%
\begin{proof}
At the terminal date,
	$\Delta J_N^{[1]}(x)=g_A(x)$ and $\Delta J_N^{[m]}(x)=0$ for $m\ge2$, so all assertions
	start in the required order.  
Suppose they hold at $\nu+1$.  
Positivity and order preservation of $\mathcal C_\nu$ give
	$c_\nu^{[m+1]}(x)\le c_\nu^{[m]}(x)$ and preserve monotonicity in the state.
Subtracting \eqref{eq:asiandp3} at levels $m$ and $m-1$ yields the same interval-projection
	algebra as Lemma~\ref{lem:median}, hence \eqref{eq:asianmedian}.  
Monotonicity of the median in each argument gives
	$\Delta J_\nu^{[m+1]}(x)\le\Delta J_\nu^{[m]}(x)$.  
This proves (i)--(iii) by backward induction.  
Part (iv) follows immediately from $c_\nu^{[m+1]}(x)\le c_\nu^{[m]}(x)$.

For (v), $g_A(x)\le x$.  
If a function has at most linear growth, then
	\eqref{eq:asianoperator} and $a_\nu<1$ show that its image under
	$\mathcal C_\nu$ is bounded by a constant multiple of $1+x^{a_\nu}$.  
Backward induction in \eqref{eq:asiandp2} therefore gives at most linear growth of every
	$J_\nu^{[m]}(x)$ and $\Delta J_\nu^{[m]}(x)$, and then
	\eqref{eq:asiansublinear} follows directly from \eqref{eq:asianoperator}.
\end{proof}

Unlike the put and Russian operators, $\mathcal C_\nu$ acts through the
	fractional power $x^{a_\nu}$, so the global $1$-Lipschitz estimate used in the
	preceding models does not follow from the same argument.  
The fractional-power structure itself, however, yields a multiplicative scaling inequality.  
This inequality is sufficient to prove the single-crossing property needed for a
	threshold theorem.
%%%%%%%%%%%%%%%%%%%%%%%%%%%%%%%%%%%%%%%%%%%%%%%
\begin{lemma}%[Asian scaling inequality]
\label{lem:asianscaling}
For every $q\ge1$, $x>0$, $m\ge1$, and $0\le\nu\le N$,
	\begin{equation}\label{eq:asianscalingmarginal}
   		\Delta J_\nu^{[m]}(qx)+1\le q\bigl(\Delta J_\nu^{[m]}(x)+1\bigr).
	\end{equation}
Moreover, if $\nu<N$, then
	\begin{equation}\label{eq:asianscalingcont}
   		c_\nu^{[m]}(qx)+1 \le q^{a_\nu}\bigl(c_\nu^{[m]}(x)+1\bigr).
	\end{equation}
\end{lemma}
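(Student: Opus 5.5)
Let me write $\Phi(\varphi)$ for the property ``$\varphi(qx)+1\le q^{\beta}(\varphi(x)+1)$ for all $q\ge1$, $x>0$''. With exponent $\beta\in(0,1]$, call it $\Phi_\beta$. I plan a backward induction on $\nu$, carried out simultaneously for all $m$, in parallel with the proof of Proposition~\ref{prop:asianstructure}. The inductive statement is that every $\Delta J^{[m]}_\nu$ satisfies $\Phi_1$.

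The key observation is that $\Phi_\beta$ is preserved by each operation appearing in the recursion.

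\emph{(a) The payoff.} $g_A$ satisfies $\Phi_1$, since $(qx-1)^+ +1=\max\{qx,1\}\le q\max\{x,1\}=q\,(g_A(x)+1)$. The zero function also satisfies $\Phi_1$, because $1\le q$.

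\emph{(b) Lattice operations.} $\Phi_\beta$ is stable under $\max$, $\min$ and hence $\med$ of three functions. Indeed, if each $\varphi_i$ satisfies $\varphi_i(qx)+1\le q^\beta(\varphi_i(x)+1)$, then for any lattice polynomial $L$ we have $L(\varphi_i(qx))+1=L(\varphi_i(qx)+1)\le L(q^\beta(\varphi_i(x)+1))=q^\beta(L(\varphi_i(x))+1)$. This uses only the monotonicity of $L$ and its commutation with adding a constant and with multiplying by a positive scalar. Stability for $\Phi_{\beta'}$ with $\beta\le\beta'$ holds trivially, since $q\ge1$.

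\emph{(c) The operator.} If $\varphi$ satisfies $\Phi_1$, then $\mathcal C_\nu\varphi$ satisfies $\Phi_{a_\nu}$. Both state maps send $qx$ to $(q^{a_\nu})\cdot(\text{image of }x)$, because $\lambda^{\mp a_\nu}(qx)^{a_\nu}=q^{a_\nu}\lambda^{\mp a_\nu}x^{a_\nu}$. Applying $\Phi_1$ with factor $q^{a_\nu}\ge1$ at each image point, and using $\widetilde p+\widetilde q=1$, gives
\begin{align*}
(\mathcal C_\nu\varphi)(qx)+1=\widetilde p\,[\varphi(q^{a_\nu}y_+)+1]+\widetilde q\,[\varphi(q^{a_\nu}y_-)+1]\le q^{a_\nu}\big((\mathcal C_\nu\varphi)(x)+1\big),
\end{align*}
where $y_\pm=\lambda^{\mp a_\nu}x^{a_\nu}$.

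The induction then runs as follows. At $\nu=N$, $\Delta J_N^{[1]}=g_A$ and $\Delta J_N^{[m]}=0$ for $m\ge2$, so all satisfy $\Phi_1$ by (a). Suppose every $\Delta J_{\nu+1}^{[m]}$ satisfies $\Phi_1$. Then (c) gives $c_\nu^{[m]}=\mathcal C_\nu\Delta J_{\nu+1}^{[m]}$ satisfying $\Phi_{a_\nu}$, which is \eqref{eq:asianscalingcont}, and hence also $\Phi_1$. The identity $c_\nu^{[0]}\equiv+\infty$ only enters through $m=1$, where $\Delta J_\nu^{[1]}=\max\{g_A,c_\nu^{[1]}\}$, and (b) handles this case. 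For $m\ge2$, the median identity \eqref{eq:asianmedian} of Proposition~\ref{prop:asianstructure}(ii), together with (a) and (b), gives $\Phi_1$ for $\Delta J_\nu^{[m]}$, which is \eqref{eq:asianscalingmarginal}.

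The only delicate point I expect is step (c): one must check that the factor pulled out is exactly $q^{a_\nu}$ and that the hypothesis is invoked with the admissible factor $q^{a_\nu}\ge1$. Beyond that, the ``$+1$'' shift must be carried consistently through the lattice operations, which (b) handles cleanly because adding $1$ commutes with $\max$, $\min$ and $\med$. Finiteness of all functions is guaranteed by Proposition~\ref{prop:asianstructure}(v), so no $\infty-\infty$ issues arise apart from the convention $c^{[0]}_\nu=+\infty$, which is never used inside a median.
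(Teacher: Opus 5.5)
Your proof is correct and follows the paper's argument essentially step for step. The shared steps are: backward induction in $\nu$; the terminal check $\max\{qx,1\}\le q\max\{x,1\}$; the homogeneity $\lambda^{\mp a_\nu}(qx)^{a_\nu}=q^{a_\nu}\lambda^{\mp a_\nu}x^{a_\nu}$ of the state maps, which yields \eqref{eq:asianscalingcont}; and then $q^{a_\nu}\le q$ combined with the max/median identity, using that the median commutes with the shift by $1$ and with positive scaling. Packaging this as the closure property $\Phi_\beta$, and invoking $\widetilde p+\widetilde q=1$ explicitly in step (c), only makes explicit what the paper's proof uses implicitly.
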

%%%%%%%%%%%%%%%%%%%%%%%%%%%%%%%%%%%%%%%%%%%%%%%
\begin{proof}
We argue backward in $\nu$.  
At the terminal date,
	\begin{align*}
   		\Delta J_N^{[1]}(x)=g_A(x)=(x-1)^+,\qquad
   		\Delta J_N^{[m]}(x)=0\quad(m\ge2).
	\end{align*}
Hence
	\begin{align*}
 		g_A(qx)+1=\max\{qx,1\} \le q\max\{x,1\}=q\bigl(g_A(x)+1\bigr), 
	\end{align*}
	and \eqref{eq:asianscalingmarginal} also holds for $m\ge2$ because $1\le q$.

Suppose now that \eqref{eq:asianscalingmarginal} holds at date $\nu+1$ for
	every $m$.  Write the two state maps in \eqref{eq:asiantransition} as
	\begin{align*}
   		T_{\nu,\pm}(x):=\lambda^{\pm a_\nu}x^{a_\nu}.
	\end{align*}
Then
	\begin{align*}
   		T_{\nu,\pm}(qx)=q^{a_\nu}T_{\nu,\pm}(x).
	\end{align*}
Using the induction hypothesis pointwise in \eqref{eq:asianoperator} gives
	\begin{align*}
 		c_\nu^{[m]}(qx)+1 &=(\mathcal C_\nu\Delta J_{\nu+1}^{[m]})(qx)+1\\
 			&\le q^{a_\nu}\bigl((\mathcal C_\nu\Delta J_{\nu+1}^{[m]})(x)+1\bigr)
 			=q^{a_\nu}\bigl(c_\nu^{[m]}(x)+1\bigr),
	\end{align*}
	which proves \eqref{eq:asianscalingcont}.

Since $a_\nu\in(0,1)$, we have $q^{a_\nu}\le q$, and also
	\begin{align*}
   		g_A(qx)+1\le q\bigl(g_A(x)+1\bigr).
	\end{align*}
For $m=1$, the identity
	$\Delta J_\nu^{[1]}(x)=\max\{g_A(x),c_\nu^{[1]}(x)\}$ and monotonicity of the maximum
	give \eqref{eq:asianscalingmarginal}.  
For $m\ge2$, use the median identity in
	Proposition~\ref{prop:asianstructure}(ii) together with
	\begin{align*}
   		\med\{u,v,w\}+1=\med\{u+1,v+1,w+1\}.
	\end{align*}
The median is nondecreasing in each argument and commutes with multiplication
	by a positive constant.  
Applying the preceding bounds to $c_\nu^{[m]}(x)$, $g_A(x)$ 
	and $c_\nu^{[m-1]}$ therefore yields \eqref{eq:asianscalingmarginal}.  
This closes the backward induction.
\end{proof}
%%%%%%%%%%%%%%%%%%%%%%%%%%%%%%%%%%%%%%%%%%%%%%%
\begin{corollary}%[Asian single crossing]
\label{cor:asiansinglecross}
Let $\nu<N$ and $m\ge1$.  If for some $x\ge1$,
	$g_A(x)\ge c_\nu^{[m]}(x),$
	then, for every $y>x$,
	$g_A(y)>c_\nu^{[m]}(y).$
Consequently $D_{A,\nu}^{[m]}\cap[1,\infty)$ is upward closed.
\end{corollary}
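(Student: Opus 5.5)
The plan is to use the scaling inequality \eqref{eq:asianscalingcont} of Lemma~\ref{lem:asianscaling} with $q:=y/x>1$. Since $x\ge1$ and $y>x$, we have $y>1$, so $g_A(y)+1=y$ and $g_A(x)+1=x$. The hypothesis $g_A(x)\ge c_\nu^{[m]}(x)$ then reads $c_\nu^{[m]}(x)+1\le x$.

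Apply \eqref{eq:asianscalingcont} at the point $x$ with $q=y/x$:
\begin{align*}
  c_\nu^{[m]}(y)+1\le (y/x)^{a_\nu}\bigl(c_\nu^{[m]}(x)+1\bigr)\le (y/x)^{a_\nu}\,x .
\end{align*}
It remains to show $(y/x)^{a_\nu}x<y$. This is equivalent to $(y/x)^{a_\nu}<y/x$, which holds strictly because $y/x>1$ and $a_\nu=(\nu+1)/(\nu+2)\in(0,1)$; here $\nu<N$ is used so that \eqref{eq:asianscalingcont} is available. Hence $c_\nu^{[m]}(y)+1<y=g_A(y)+1$, i.e.\ $g_A(y)>c_\nu^{[m]}(y)$.

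Upward closedness of $D_{A,\nu}^{[m]}\cap[1,\infty)$ is then just a restatement: any $x\ge1$ in the set forces every $y>x$ into the set, even with strict inequality.

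The only point needing care is the restriction $x\ge1$. It is what allows the identity $g_A(x)+1=x$ rather than $\max\{x,1\}$; for $x<1$ the hypothesis would give only $c_\nu^{[m]}(x)\le0$, and the argument would need the separate fact that $c_\nu^{[m]}\ge0$. This is why the corollary is stated on $[1,\infty)$. Otherwise the argument is direct, with no real obstacle.
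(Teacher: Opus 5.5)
Your proof is correct and is the paper's own argument: apply \eqref{eq:asianscalingcont} with $q=y/x>1$, use $c_\nu^{[m]}(x)+1\le x$, and conclude via $q^{a_\nu}x<qx=y=g_A(y)+1$. One caveat on your closing aside, which is not used in the proof: for $x<1$, nonnegativity of $c_\nu^{[m]}$ would still not give the strict conclusion, because $g_A(y)=0$ for $y\le1$ and so $g_A(y)>c_\nu^{[m]}(y)\ge0$ cannot hold there.
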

%%%%%%%%%%%%%%%%%%%%%%%%%%%%%%%%%%%%%%%%%%%%%%%
\begin{proof}
Set $q:=y/x>1$.  Since $g_A(x)=x-1$ for $x\ge1$, Lemma~\ref{lem:asianscaling} gives
	\begin{align*}
  		c_\nu^{[m]}(y)+1
  		\le q^{a_\nu}\bigl(c_\nu^{[m]}(x)+1\bigr)
  		\le q^{a_\nu}x< qx =y=g_A(y)+1 .
	\end{align*}
The strict inequality uses $a_\nu<1$ and $q>1$.  
Hence $c_\nu^{[m]}(y)<g_A(y)$.
\end{proof}
%%%%%%%%%%%%%%%%%%%%%%%%%%%%%%%%%%%%%%%%%%%%%%%
\begin{theorem}%[Geometric-Asian put: optimal threshold policy]
\label{thm:asianthreshold}
Define
	\begin{equation*}
 		b_{A,\nu}^{[m]}:=\inf\{x\ge1:g_A(x)\ge c_\nu^{[m]}(x)\}.
	\end{equation*}
Then $b_{A,\nu}^{[m]}<\infty$ and the economically relevant exercise region is
	\begin{align*}
   		D_{A,\nu}^{[m]}\cap[1,\infty)=[b_{A,\nu}^{[m]},\infty).
	\end{align*}
Furthermore
	\begin{equation*}
   		b_{A,\nu}^{[m+1]}\le b_{A,\nu}^{[m]},
	\end{equation*}
	and the recursive first-entry rule into these upper regions is optimal.
At the terminal date $b_{A,N}^{[m]}=1$.
\end{theorem}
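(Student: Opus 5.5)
The plan is to combine the single-crossing Corollary~\ref{cor:asiansinglecross} with an existence argument based on the sublinear growth \eqref{eq:asiansublinear}, and then to handle nesting and optimality exactly as for the put and Russian options.

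\emph{Terminal date.} For $\nu=N$ we have $c_N^{[m]}\equiv0$. Since $g_A\ge0$ everywhere, $D_{A,N}^{[m]}$ is the whole half-line, so $D_{A,N}^{[m]}\cap[1,\infty)=[1,\infty)$ and $b_{A,N}^{[m]}=1$.

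\emph{Finiteness of the boundary.} Fix $\nu<N$. By Proposition~\ref{prop:asianstructure}(v), $c_\nu^{[m]}(x)=O(x^{a_\nu})$ with $a_\nu<1$, while $g_A(x)=x-1$ for $x\ge1$. Hence $g_A(x)-c_\nu^{[m]}(x)\to+\infty$, so the set $\{x\ge1:g_A(x)\ge c_\nu^{[m]}(x)\}$ is nonempty and $b:=b_{A,\nu}^{[m]}<\infty$. Alternatively, one can get an explicit point of the set from Lemma~\ref{lem:asianscaling}. Take $x_0=1$ and $q=y$; then
\[
c_\nu^{[m]}(y)+1\le y^{a_\nu}\bigl(c_\nu^{[m]}(1)+1\bigr),
\]
and the right-hand side is at most $y$ as soon as $y^{1-a_\nu}\ge c_\nu^{[m]}(1)+1$.

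\emph{Identification as a closed half-line.} Upward closedness from Corollary~\ref{cor:asiansinglecross} gives $(b,\infty)\subseteq D$. It remains to show that $b$ itself lies in $D$. For this I will argue that $c_\nu^{[m]}$ is continuous on $(0,\infty)$, or at least right-continuous; this is the step needing care. I would prove continuity by backward induction: $g_A$ is continuous, and $\mathcal C_\nu$ composes with the continuous maps $x\mapsto\lambda^{\pm a_\nu}x^{a_\nu}$, so it preserves continuity. Max, median and sums also preserve continuity, so the recursion \eqref{eq:asiandp2}--\eqref{eq:asianmedian} propagates it. Then $g_A-c_\nu^{[m]}$ is continuous, and $g_A(b)\ge c_\nu^{[m]}(b)$ follows by taking limits from the right.

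If continuity were delicate, a fallback is available. The scaling inequality \eqref{eq:asianscalingcont} gives $c(qx)\le q^{a_\nu}(c(x)+1)-1$, which is an upper semicontinuity-type bound from the right. Combined with monotonicity (Proposition~\ref{prop:asianstructure}(i)), it yields $c(b^+)=c(b)$, which is what is needed. Either way we get $D_{A,\nu}^{[m]}\cap[1,\infty)=[b,\infty)$.

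\emph{Nesting and optimality.} By Proposition~\ref{prop:asianstructure}(iii)--(iv), $c_\nu^{[m+1]}\le c_\nu^{[m]}$, so $D_{A,\nu}^{[m]}\subseteq D_{A,\nu}^{[m+1]}$; taking infima gives $b_{A,\nu}^{[m+1]}\le b_{A,\nu}^{[m]}$.

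For optimality, I would note two facts:
\begin{itemize}
\item By \eqref{eq:asiandp3}, exercising on $\{g_A\ge c_\nu^{[m]}\}$ attains the maximum in \eqref{eq:asiandp2}.
\item On $x<1$ we have $g_A=0\le c$, so continuing there is also optimal; it is a tie or a strict preference, which I break toward continuation as for the put.
\end{itemize}
Hence the first-entry rule into $[b_{A,\nu}^{[i]},\infty)$, with the ordering $\sigma_i^*>\sigma_{i+1}^*$, is a maximizing selector in the dynamic programming equation. Theorem~\ref{thm:multiple} then applies in its Markovian form \eqref{eq:calendar}, for the time-inhomogeneous chain $R$ under $\Pt$; integrability holds because $g_A(R_\nu)\le R_\nu\le\lambda^{\nu}$. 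This makes the rule optimal. The share-measure identity $\E[\alpha^\sigma(G_\sigma-S_\sigma)^+]=s_0\Et[g_A(R_\sigma)]$ transfers the result back to the original problem, using that $\Pt\sim\PP$ gives the same admissible stopping vectors.

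I expect the main obstacle to be the closedness of the region at $b$, that is, the continuity of $c_\nu^{[m]}$. Everything else follows directly from earlier results.
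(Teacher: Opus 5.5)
Your proposal is correct and follows essentially the same route as the paper. Nonemptiness comes from the sublinear growth $c_\nu^{[m]}(x)=o(x)$, closedness from continuity of $c_\nu^{[m]}$ obtained by backward induction, and upward closedness from the single-crossing corollary; nesting follows from $c_\nu^{[m+1]}\le c_\nu^{[m]}$, and optimality holds because the boundary action attains the maximum in the dynamic programming equation for the time--space chain $(\nu,R_\nu)$. The step you flag as the main obstacle is easier than either of your arguments: since $c_\nu^{[m]}$ is nondecreasing, every $y>b$ satisfies $g_A(y)\ge c_\nu^{[m]}(y)\ge c_\nu^{[m]}(b)$, and letting $y\downarrow b$ with $g_A$ continuous gives $b\in D_{A,\nu}^{[m]}$ without any continuity of $c_\nu^{[m]}$.
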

%%%%%%%%%%%%%%%%%%%%%%%%%%%%%%%%%%%%%%%%%%%%%%%
\begin{proof}
At $\nu=N$, $c_N^{[m]}(x)\equiv0$, so the assertion is immediate.  
Let $\nu<N$.  
Proposition~\ref{prop:asianstructure}(v) gives
	$c_\nu^{[m]}(x)=o(x)$, whereas $g_A(x)=x-1$ for $x\ge1$.  
Hence
	\begin{align*}
   		g_A(x)-c_\nu^{[m]}(x)\longrightarrow+\infty   \qquad(x\to\infty),
	\end{align*}
	so $D_{A,\nu}^{[m]}\cap[1,\infty)$ is nonempty.  
Continuity of $c_\nu^{[m]}$ follows by backward induction from \eqref{eq:asiandp2}, so this
	set is closed.  
Corollary~\ref{cor:asiansinglecross} shows that it is also upward closed.  
Therefore
	\begin{align*}
   		D_{A,\nu}^{[m]}\cap[1,\infty)=[b_{A,\nu}^{[m]},\infty)
	\end{align*}
	for a finite $b_{A,\nu}^{[m]}$.  
Boundary nesting follows from $c_\nu^{[m+1]}(x)\le c_\nu^{[m]}(x)$, equivalently
	$D_{A,\nu}^{[m]}\subseteq D_{A,\nu}^{[m+1]}$.
Finally, the boundary action attains the maximum in \eqref{eq:asiandp2} at every
	state, so finite-horizon backward induction, or equivalently
	Theorem~\ref{thm:multiple} applied to the time-space Markov chain
	$(\nu,R_\nu)$, proves optimality.
\end{proof}

\begin{remark}
No general monotonicity of $\nu\mapsto b_{A,\nu}^{[m]}$ is asserted here.  
The transition operator itself changes with calendar time through
	$a_\nu=(\nu+1)/(\nu+2)$, so the time-monotonicity argument used for the
	homogeneous put and Russian chains does not transfer without additional
	conditions.
\end{remark}

%%%%%%%%%%%%%%%%%%%%%%%%%%%%%%%%%%%%%%%%%%%%%%%
\subsection{Random maturity}
%%%%%%%%%%%%%%%%%%%%%%%%%%%%%%%%%%%%%%%%%%%%%%%
Let the independent random maturity $\widetilde N$ be the same as in
	Section~\ref{sec:random}.  
It remains independent of the stock process under $\Pt$, 
	because the Radon--Nikodym density defining $\Pt$ is measurable with
	respect to the stock filtration.  In calendar time write
	\begin{equation*}
 		\rho_\nu:=\PP(\widetilde N\ge\nu+1\mid\widetilde N\ge\nu)
		 	=\pi_{N-\nu-1},\qquad 0\le\nu<N,
	\end{equation*}
	and
	\begin{equation*}
	 	\Pi_{k,j}:=\prod_{r=k}^{j-1}\rho_r=\PP(\widetilde N\ge j\mid\widetilde N\ge k),
 		\qquad k\le j\le N,
	\end{equation*}
	with the empty product equal to one.

Conditionally on the contract being alive at calendar time $\nu$, define the
	survival-weighted Asian operator
	\begin{equation*}
   		\bar{\mathcal C}_\nu:=\rho_\nu\mathcal C_\nu .
	\end{equation*}
The original monetary value at time zero becomes
	\begin{equation*}
		\sup_{\vec\sigma\in\T_0^{[m]}}\E\Big[\sum_{i:\,\sigma_i\le N}
     		 \alpha^{\sigma_i}\mathbf 1_{\{\sigma_i\le\widetilde N\}}
     		 (G_{\sigma_i}-S_{\sigma_i})^+\Big]=s_0\,\bar J_0^{[m]}(1).
	\end{equation*}
The normalized alive-state values satisfy
	$\bar J_N^{[m]}(x)=g_A(x),\ \bar J_\nu^{[0]}(x)\equiv0,$
	and, for $\nu<N$,
	\begin{equation}\label{eq:asianrandomdp2}
 		\bar J_\nu^{[m]}(x)=\max\Big\{g_A(x)+(\bar{\mathcal C}_\nu \bar J_{\nu+1}^{[m-1]})(x),\,
             (\bar{\mathcal C}_\nu \bar J_{\nu+1}^{[m]})(x)\Big\}.
	\end{equation}
Indeed, the immediate payoff is not multiplied by $\rho_\nu$ because the
	contract is already known to be alive at date $\nu$; only future values require
	survival to the next date.

Set
	\begin{equation*}
 		\Delta\bar J_\nu^{[m]}(x):=\bar J_\nu^{[m]}(x)-\bar J_\nu^{[m-1]}(x),\qquad
		 \bar c_\nu^{[m]}(x) :=(\bar{\mathcal C}_\nu \Delta\bar J_{\nu+1}^{[m]})(x)
	 	\quad(\nu<N),
	\end{equation*}
	with $\bar c_N^{[m]}(x)=0$.
%%%%%%%%%%%%%%%%%%%%%%%%%%%%%%%%%%%%%%%%%%%%%%%
\begin{proposition}%[Geometric-Asian put: random-maturity comparison]
\label{prop:asianrandom}
For arbitrary survival probabilities $\rho_\nu\in[0,1]$:
\begin{enumerate}[(i)]
	\item the median identity and diminishing-marginal-value conclusions of
      		Proposition~\ref{prop:asianstructure} hold with bars;
	\item the random-maturity exercise sets
      		\begin{align*}
         		\bar D_{A,\nu}^{[m]} :=\{x>0:g_A(x)\ge\bar c_\nu^{[m]}(x)\}
		\end{align*}
      		are nested in $m$;
	\item for every $\nu,m$,
      		$\Delta\bar J_\nu^{[m]}(x)\le\Delta J_\nu^{[m]}(x),\
       				 \bar c_\nu^{[m]}(x)\le c_\nu^{[m]}(x),\text{and} \ 
       				 \bar J_\nu^{[m]}(x)\le J_\nu^{[m]}(x);$
      consequently
      \begin{equation}\label{eq:asianrandomsetinclude}
        	D_{A,\nu}^{[m]}\subseteq\bar D_{A,\nu}^{[m]}.
      \end{equation}
\end{enumerate}
\end{proposition}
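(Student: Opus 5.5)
The plan is to run one backward induction in calendar time $\nu$, from $N$ down to $0$, exactly as in Proposition~\ref{prop:asianstructure}, with the bars carried along. Parts (i) and (ii) come out of that induction, and part (iii) is handled by a simultaneous comparison in the style of Corollaries~\ref{cor:comparison} and~\ref{cor:russianrandomcomparison}.

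\emph{Parts (i) and (ii).} The operator $\bar{\mathcal C}_\nu=\rho_\nu\mathcal C_\nu$ is positive and order preserving, and it preserves monotonicity in the state, since $\rho_\nu\ge0$. It is also the same operator at every level $m$.

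At $\nu=N$ we have $\Delta\bar J_N^{[1]}=g_A$, $\Delta\bar J_N^{[m]}=0$ for $m\ge2$, and $\bar c_N^{[m]}=0$. These are nonnegative, nondecreasing and ordered in $m$.

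Now assume the properties hold at $\nu+1$. Order preservation of $\bar{\mathcal C}_\nu$ gives $\bar c_\nu^{[m+1]}\le\bar c_\nu^{[m]}$, and both are nonnegative and nondecreasing. Subtracting the rewritten form
\[
\bar J_\nu^{[m]}=\max\{g_A,\bar c_\nu^{[m]}\}+\bar{\mathcal C}_\nu\bar J_{\nu+1}^{[m-1]}
\]
at levels $m$ and $m-1$ reproduces the three-case algebra of Lemma~\ref{lem:median}. This yields the median identity $\Delta\bar J_\nu^{[m]}=\med\{\bar c_\nu^{[m]},g_A,\bar c_\nu^{[m-1]}\}$ for $m\ge2$, and the maximum formula for $m=1$. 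The interval-projection comparison then gives $\Delta\bar J_\nu^{[m+1]}\le\Delta\bar J_\nu^{[m]}$.

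Nesting of $\bar D^{[m]}_{A,\nu}$ in $m$ follows directly from $\bar c^{[m+1]}_\nu\le\bar c^{[m]}_\nu$. Finiteness and linear growth carry over verbatim, because $\rho_\nu\le1$.

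\emph{Part (iii).} I would prove by backward induction on $\nu$ that $\Delta\bar J_\nu^{[m]}\le\Delta J_\nu^{[m]}$ holds for all $m$ simultaneously.

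At $\nu=N$ the two sides are equal. Suppose the inequality holds at $\nu+1$. Then
\[
\bar c_\nu^{[m]}=\rho_\nu\,\mathcal C_\nu\Delta\bar J_{\nu+1}^{[m]}\le\mathcal C_\nu\Delta\bar J_{\nu+1}^{[m]}\le\mathcal C_\nu\Delta J_{\nu+1}^{[m]}=c_\nu^{[m]}.
\]
The first inequality uses $\rho_\nu\le1$ together with nonnegativity of $\Delta\bar J$ from part (i). The second uses order preservation of $\mathcal C_\nu$.

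For $m=1$, monotonicity of the maximum gives $\Delta\bar J_\nu^{[1]}\le\Delta J_\nu^{[1]}$. For $m\ge2$, the median identities for both the barred and unbarred problems apply, since the required orderings are available from part (i) and Proposition~\ref{prop:asianstructure}. Monotonicity of the median in each argument then gives the inequality.

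Summing over $j\le m$, with $\bar J^{[0]}=J^{[0]}=0$, gives $\bar J_\nu^{[m]}\le J_\nu^{[m]}$. Finally, $\bar c_\nu^{[m]}\le c_\nu^{[m]}$ means $g_A\ge c_\nu^{[m]}$ implies $g_A\ge\bar c_\nu^{[m]}$, which is \eqref{eq:asianrandomsetinclude}.

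The only delicate point is the order of the two inductions. Nonnegativity of $\Delta\bar J_{\nu+1}$ is needed to use $\rho_\nu\le1$, and the $m$-ordering of the $c$'s is needed to invoke the median identity. To handle this, I would either establish (i) first as a complete induction and then prove (iii), or carry all statements together at each date $\nu$. Neither route poses a genuine obstacle.
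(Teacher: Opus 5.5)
Your proposal is correct and follows essentially the same route as the paper. Part (i) uses the same backward induction as Proposition~\ref{prop:asianstructure}, relying on $\bar{\mathcal C}_\nu=\rho_\nu\mathcal C_\nu$ being positive and order preserving. Part (iii) uses a simultaneous backward comparison $\Delta\bar J_{\nu+1}^{[m]}\le\Delta J_{\nu+1}^{[m]}\Rightarrow\bar c_\nu^{[m]}\le c_\nu^{[m]}$, followed by monotonicity of the maximum/median, summation over rights, and inclusion of the contact sets. You are slightly more careful than the paper in stating explicitly that nonnegativity of $\Delta\bar J_{\nu+1}^{[m]}$ is what allows you to drop the factor $\rho_\nu\le1$.
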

%%%%%%%%%%%%%%%%%%%%%%%%%%%%%%%%%%%%%%%%%%%%%%%
\begin{proof}
Part (i) is the same backward induction as in
	Proposition~\ref{prop:asianstructure}, because
	$\bar{\mathcal C}_\nu$ is positive and order preserving.  
For the comparison, argue simultaneously backward in $\nu$.  
At $\nu=N$ the marginal values agree.
If $\Delta\bar J_{\nu+1}^{[m]}(x)\le\Delta J_{\nu+1}^{[m]}(x)$, then
	\begin{align*}
	 	\bar c_\nu^{[m]}(x)=\rho_\nu(\mathcal C_\nu \Delta\bar J_{\nu+1}^{[m]})(x)
	 	\le(\mathcal C_\nu \Delta J_{\nu+1}^{[m]})(x)=c_\nu^{[m]}(x).
	\end{align*}
For $m=1$ the maximum formula preserves the inequality, and for $m\ge2$ the
	same is true by monotonicity of the median in each argument.  
Hence $\Delta\bar J_\nu^{[m]}(x)\le\Delta J_\nu^{[m]}(x)$.  
Summing over the marginal rights gives the value comparison.  
Finally $\bar c_\nu^{[m]}(x)\le c_\nu^{[m]}(x)$ gives
	\eqref{eq:asianrandomsetinclude} directly.
\end{proof}
%%%%%%%%%%%%%%%%%%%%%%%%%%%%%%%%%%%%%%%%%%%%%%%
\begin{lemma}%[Asian scaling under random maturity]
\label{lem:asianrandomscaling}
For every $q\ge1$, $x>0$, $m\ge1$, and $0\le\nu\le N$,
	\begin{equation}\label{eq:asianrandomscalingmarginal}
  		\Delta\bar J_\nu^{[m]}(qx)+1
  		\le q\bigl(\Delta\bar J_\nu^{[m]}(x)+1\bigr).
	\end{equation}
Moreover, if $\nu<N$, then
	\begin{equation*}
   		\bar c_\nu^{[m]}(qx)+1
   		\le q^{a_\nu}\bigl(\bar c_\nu^{[m]}(x)+1\bigr).
	\end{equation*}
\end{lemma}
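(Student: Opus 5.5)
The plan is to repeat the backward induction of Lemma~\ref{lem:asianscaling} with $\bar{\mathcal C}_\nu=\rho_\nu\mathcal C_\nu$ in place of $\mathcal C_\nu$. The only new feature is the survival factor $\rho_\nu\in[0,1]$, which enters as a multiplicative constant. It must be absorbed into the ``$+1$'' term without breaking the inequality.

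\emph{Base case.} At $\nu=N$ the barred marginal values coincide with the unbarred ones: $\Delta\bar J_N^{[1]}(x)=g_A(x)$ and $\Delta\bar J_N^{[m]}(x)=0$ for $m\ge2$. Hence \eqref{eq:asianrandomscalingmarginal} holds exactly as in the proof of Lemma~\ref{lem:asianscaling}, using $\max\{qx,1\}\le q\max\{x,1\}$ and $1\le q$.

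\emph{Continuation premium.} Assume \eqref{eq:asianrandomscalingmarginal} holds at date $\nu+1$ for every $m$. The state maps satisfy $T_{\nu,\pm}(qx)=q^{a_\nu}T_{\nu,\pm}(x)$, and $q^{a_\nu}\le q$ would be too weak here. So I will first redo the inductive step of Lemma~\ref{lem:asianscaling} and note that it in fact uses the hypothesis at the scaling factor $q^{a_\nu}\ge1$, which gives
\begin{align*}
  (\mathcal C_\nu\Delta\bar J_{\nu+1}^{[m]})(qx)+1\le q^{a_\nu}\bigl((\mathcal C_\nu\Delta\bar J_{\nu+1}^{[m]})(x)+1\bigr).
\end{align*}
Write $u:=(\mathcal C_\nu\Delta\bar J_{\nu+1}^{[m]})(x)$ and $u':=(\mathcal C_\nu\Delta\bar J_{\nu+1}^{[m]})(qx)$, so that $u'+1\le q^{a_\nu}(u+1)$. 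Multiplying by $\rho_\nu\ge0$ gives $\rho_\nu u'+\rho_\nu\le q^{a_\nu}(\rho_\nu u+\rho_\nu)$. Then
\begin{align*}
  \bar c_\nu^{[m]}(qx)+1=\rho_\nu u'+\rho_\nu+(1-\rho_\nu)\le q^{a_\nu}\rho_\nu(u+1)+q^{a_\nu}(1-\rho_\nu)=q^{a_\nu}\bigl(\bar c_\nu^{[m]}(x)+1\bigr),
\end{align*}
using $1-\rho_\nu\ge0$ and $q^{a_\nu}\ge1$. This absorption of the missing mass $1-\rho_\nu$ into the constant is the only genuinely new point. I expect it to be the main obstacle, and it is settled by the convex-combination argument just displayed. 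It uses positivity and linearity of $\mathcal C_\nu$, including $\mathcal C_\nu 1=1$ because $\widetilde p+\widetilde q=1$.

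\emph{Marginal value.} Next pass from $\bar c$ to $\Delta\bar J_\nu^{[m]}$. By Proposition~\ref{prop:asianrandom}(i), $\Delta\bar J_\nu^{[1]}(x)=\max\{g_A(x),\bar c_\nu^{[1]}(x)\}$ and $\Delta\bar J_\nu^{[m]}(x)=\med\{\bar c_\nu^{[m]}(x),g_A(x),\bar c_\nu^{[m-1]}(x)\}$ for $m\ge2$. When $m=2$ the argument $\bar c_\nu^{[1]}$ also has scaling factor $q^{a_\nu}$; for $m=1$ the convention $\bar c^{[0]}=+\infty$ is not needed. Each argument $h$ satisfies $h(qx)+1\le q(h(x)+1)$, using $q^{a_\nu}\le q$ and the bound for $g_A$. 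Since $\max$ and $\med$ commute with adding $1$, commute with multiplication by $q>0$, and are nondecreasing in each argument, \eqref{eq:asianrandomscalingmarginal} follows at date $\nu$. This closes the backward induction.
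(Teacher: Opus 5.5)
Your proposal is correct and follows the paper's proof essentially step for step. The steps are: the terminal case; the continuation bound obtained by applying the date-$(\nu+1)$ hypothesis at scaling factor $q^{a_\nu}\ge 1$; absorbing the missing mass $1-\rho_\nu$ via $q^{a_\nu}\ge 1$, exactly as in the paper's displayed chain; and the max/median argument from Proposition~\ref{prop:asianrandom}(i). You also state explicitly the use of $\widetilde p+\widetilde q=1$ and of the hypothesis at factor $q^{a_\nu}$, which the paper leaves implicit.
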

%%%%%%%%%%%%%%%%%%%%%%%%%%%%%%%%%%%%%%%%%%%%%%%
\begin{proof}
At the terminal date the argument is the same as in the fixed-maturity case.
Assume \eqref{eq:asianrandomscalingmarginal} at date $\nu+1$.  
The same scaling of the state maps as in Lemma~\ref{lem:asianscaling} gives
	\begin{align*}
 		\mathcal C_\nu\Delta\bar J_{\nu+1}^{[m]}(qx)+1
  		\le q^{a_\nu}\bigl(\mathcal C_\nu\Delta\bar J_{\nu+1}^{[m]}(x)+1 \bigr).
	\end{align*}
Hence, using $0\le\rho_\nu\le1$ and $q^{a_\nu}\ge1$,
	\begin{align*}
 		\bar c_\nu^{[m]}(qx)+1
 			&=\rho_\nu\mathcal C_\nu\Delta\bar J_{\nu+1}^{[m]}(qx)+1\\
 			&\le\rho_\nu q^{a_\nu}
 			\bigl(\mathcal C_\nu\Delta\bar J_{\nu+1}^{[m]}(x)+1\bigr)+(1-\rho_\nu)\\
 			&\le q^{a_\nu}\bigl(\rho_\nu\mathcal C_\nu\Delta\bar J_{\nu+1}^{[m]}(x)+1 \bigr)
			=q^{a_\nu}\bigl(\bar c_\nu^{[m]}(x)+1\bigr).
	\end{align*}
Applying the same maximum/median argument as in the fixed-maturity proof,
	using Proposition~\ref{prop:asianrandom}(i), yields
	\eqref{eq:asianrandomscalingmarginal} and closes the backward induction.
\end{proof}
%%%%%%%%%%%%%%%%%%%%%%%%%%%%%%%%%%%%%%%%%%%%%%%
\begin{corollary}%[Geometric-Asian put with random maturity]
\label{cor:asianrandomthreshold}
There is a finite threshold $\bar b_{A,\nu}^{[m]}$ such that
	\begin{align*}
 	 	\bar D_{A,\nu}^{[m]}\cap[1,\infty)=[\bar b_{A,\nu}^{[m]},\infty),
		\qquad \bar b_{A,\nu}^{[m+1]}\le\bar b_{A,\nu}^{[m]}.
	\end{align*}
Moreover,
	\begin{equation}\label{eq:asianboundarycompare}
   		\bar b_{A,\nu}^{[m]}\le b_{A,\nu}^{[m]}.
	\end{equation}
Thus independent random maturity enlarges the geometric-Asian stopping region.
\end{corollary}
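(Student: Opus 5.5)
The plan mirrors the proofs of Corollary~\ref{cor:asiansinglecross} and Theorem~\ref{thm:asianthreshold}, with the barred quantities and Lemma~\ref{lem:asianrandomscaling} in place of their unbarred counterparts. At $\nu=N$ we have $\bar c_N^{[m]}\equiv0$, so $\bar D_{A,N}^{[m]}\cap[1,\infty)=[1,\infty)$ and $\bar b_{A,N}^{[m]}=1=b_{A,N}^{[m]}$; every claim is then trivial. Hence fix $\nu<N$.

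\emph{Upward closedness.} Suppose $x\ge1$ and $g_A(x)\ge\bar c_\nu^{[m]}(x)$, and let $y>x$. Put $q=y/x>1$. The second inequality of Lemma~\ref{lem:asianrandomscaling} gives
\begin{align*}
\bar c_\nu^{[m]}(y)+1\le q^{a_\nu}\bigl(\bar c_\nu^{[m]}(x)+1\bigr)\le q^{a_\nu}x<qx=y=g_A(y)+1 .
\end{align*}
So $\bar D_{A,\nu}^{[m]}\cap[1,\infty)$ is upward closed.

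\emph{Nonemptiness and closedness.} By Proposition~\ref{prop:asianrandom}(iii), $0\le\bar c_\nu^{[m]}\le c_\nu^{[m]}$, and Proposition~\ref{prop:asianstructure}(v) gives $c_\nu^{[m]}(x)=o(x)$. Hence $g_A(x)-\bar c_\nu^{[m]}(x)\to+\infty$ and the set is nonempty. For closedness I need continuity of $\bar c_\nu^{[m]}$. This follows by backward induction from \eqref{eq:asianrandomdp2}: $g_A$ is continuous, the state maps $x\mapsto\lambda^{\pm a_\nu}x^{a_\nu}$ are continuous, $\bar{\mathcal C}_\nu$ is a finite positive combination of compositions with them, and max and differences preserve continuity. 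Combining the three properties, the set equals $[\bar b_{A,\nu}^{[m]},\infty)$ with $\bar b_{A,\nu}^{[m]}:=\inf\{x\ge1:g_A(x)\ge\bar c_\nu^{[m]}(x)\}<\infty$.

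\emph{Nesting and comparison.} Nesting in $m$ is Proposition~\ref{prop:asianrandom}(ii): $\bar D_{A,\nu}^{[m]}\subseteq\bar D_{A,\nu}^{[m+1]}$, and intersecting with $[1,\infty)$ gives $[\bar b^{[m]}_{A,\nu},\infty)\subseteq[\bar b^{[m+1]}_{A,\nu},\infty)$, that is, $\bar b_{A,\nu}^{[m+1]}\le\bar b_{A,\nu}^{[m]}$. Similarly, \eqref{eq:asianrandomsetinclude} intersected with $[1,\infty)$ gives $[b_{A,\nu}^{[m]},\infty)\subseteq[\bar b_{A,\nu}^{[m]},\infty)$, which is \eqref{eq:asianboundarycompare}.

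The only step needing any care is the continuity giving closedness, so that the infimum is attained and the region is closed rather than half-open. Everything else is a direct transcription of the fixed-maturity argument.
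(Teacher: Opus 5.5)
Your proof is correct and follows the paper's argument essentially step for step: single crossing via Lemma~\ref{lem:asianrandomscaling}, then sublinear growth and continuity to get a closed nonempty upper interval, then nesting and \eqref{eq:asianboundarycompare} by intersecting the set inclusions with $[1,\infty)$. The only small deviation is that you obtain $\bar c_\nu^{[m]}(x)=o(x)$ from $\bar c_\nu^{[m]}\le c_\nu^{[m]}$ (Proposition~\ref{prop:asianrandom}(iii)) and Proposition~\ref{prop:asianstructure}(v), whereas the paper reruns the growth argument under $\bar{\mathcal C}_\nu$; both are equally valid.
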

%%%%%%%%%%%%%%%%%%%%%%%%%%%%%%%%%%%%%%%%%%%%%%%
\begin{proof}
The case $\nu=N$ is immediate, so let $\nu<N$.  Suppose that for some $x\ge1$,
	$g_A(x)\ge\bar c_\nu^{[m]}(x)$, and let $y>x$ with $q:=y/x>1$.
Lemma~\ref{lem:asianrandomscaling} gives
\begin{align*}
	\begin{aligned}
  	\bar c_\nu^{[m]}(y)+1
  		\le q^{a_\nu}\bigl(\bar c_\nu^{[m]}(x)+1\bigr)
  		\le q^{a_\nu}x< qx =y=g_A(y)+1 .
	\end{aligned}
\end{align*}
Thus the random-maturity model has the same single-crossing property, and
	$\bar D_{A,\nu}^{[m]}\cap[1,\infty)$ is upward closed.

The growth argument in Proposition~\ref{prop:asianstructure}(v) is unchanged
	under $\bar{\mathcal C}_\nu=\rho_\nu\mathcal C_\nu$ with
	$0\le\rho_\nu\le1$, so $\bar c_\nu^{[m]}(x)=o(x)$.  
Continuity follows by backward induction from \eqref{eq:asianrandomdp2}.  
Hence the stopping set is a closed upper interval beginning at a finite threshold.  
Boundary nesting in the number of rights follows from Proposition~\ref{prop:asianrandom}(ii).
Finally, \eqref{eq:asianrandomsetinclude} and the upper-interval
	representations of the two stopping sets imply \eqref{eq:asianboundarycompare}.
\end{proof}

%%%%%%%%%%%%%%%%%%%%%%%%%%%%%%%%%%%%%%%%%%%%%%%
\section{Conclusion}\label{sec:conclusion}
%%%%%%%%%%%%%%%%%%%%%%%%%%%%%%%%%%%%%%%%%%%%%%%
The main conclusion of this paper is that the sequence of marginal values
	$\Delta V^{[m]}$, together with the median identity
	$\Delta V^{[m]}_n(x)=\med\{f^{[m]}_n(x),\,g(x),\,f^{[m-1]}_n(x)\},$
	provides the key structural tool for deriving the optimal multiple-exercise
	rule for the American put.
The same marginal-value approach, combined with appropriate state reductions,
	also applies to Russian and geometric-average Asian options.
	
%%%%%%%%%%%%%%%%%%%%%%%%%%%%%%%%%%%%%%%%%%%%%%%	
\appendix
\section{The median operation and the class $\Mcl$}\label{app:operator}
%%%%%%%%%%%%%%%%%%%%%%%%%%%%%%%%%%%%%%%%%%%%%%%
For real numbers $a\le c$ and arbitrary $t$, define the projection of $t$
	onto the interval $[a,c]$ by
	$\Pi_{[a,c]}(t):=\min\{c,\max\{t,a\}\}$.  
Then
	$\med\{a,t,c\}=\Pi_{[a,c]}(t)$ whenever $a\le c$.
%%%%%%%%%%%%%%%%%%%%%%%%%%%%%%%%%%%%%%%%%%%%%%%
\begin{lemma}\label{lem:medianproperties}
Let $a\le c$ and $a'\le c'$ be real numbers and $t,t'\in\R$.
	\begin{enumerate}[(i)]
	\item $\Pi_{[a,c]}(t)$ is nondecreasing in each of $a$, $c$ and $t$.  In
      		particular, if $a\le a'$, $c\le c'$ and $t\le t'$, then
      		$\Pi_{[a,c]}(t)\le\Pi_{[a',c']}(t')$.
	\item $|\Pi_{[a,c]}(t)-\Pi_{[a',c']}(t')|
     		 \le\max\{|a-a'|,|c-c'|,|t-t'|\}$.
	\item If $a,c,t\ge0$ then $\Pi_{[a,c]}(t)\ge0$.
	\end{enumerate}
\end{lemma}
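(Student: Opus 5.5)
The plan is to work directly from the formula $\Pi_{[a,c]}(t)=\min\{c,\max\{t,a\}\}$, using only the elementary facts that $\max$ and $\min$ of two reals are nondecreasing in each argument and $1$-Lipschitz in the sup-norm.

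For (i), note that $\max\{t,a\}$ is nondecreasing in $t$ and in $a$. The outer $\min\{c,\cdot\}$ is nondecreasing in $c$ and in its second argument. Composing these monotone maps gives monotonicity in each of $a$, $c$, $t$ separately. The joint statement then follows by changing one variable at a time: first $t\to t'$, then $a\to a'$, then $c\to c'$. Each intermediate triple must still define a legitimate interval, or else the formula must be read directly as a function of three unconstrained reals. The cleanest route is to prove monotonicity of the map $(a,c,t)\mapsto\min\{c,\max\{t,a\}\}$ on all of $\R^3$, with no ordering constraint. The chain of inequalities then needs no care about whether intermediate triples are ordered.

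For (ii), use the elementary bounds
\begin{align*}
|\max\{u,v\}-\max\{u',v'\}|&\le\max\{|u-u'|,|v-v'|\},\\
|\min\{u,v\}-\min\{u',v'\}|&\le\max\{|u-u'|,|v-v'|\}.
\end{align*}
These come from $u\le u'+\delta$ and $v\le v'+\delta$ with $\delta:=\max\{|u-u'|,|v-v'|\}$, followed by symmetry. Applying the first bound to $\max\{t,a\}$ and then the second to the outer $\min$ gives
\begin{align*}
|\Pi_{[a,c]}(t)-\Pi_{[a',c']}(t')|\le\max\{|c-c'|,\max\{|t-t'|,|a-a'|\}\},
\end{align*}
which is the claim.

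For (iii), with $a,c,t\ge0$ we have $\max\{t,a\}\ge0$ and $c\ge0$, so the minimum is nonnegative.

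No step is a real obstacle. The only point requiring attention is in (i): intermediate triples may violate $a\le c$. This is avoided by working with the explicit min-max formula, which is defined and monotone for all real triples. It agrees with the median whenever $a\le c$, by the remark preceding the lemma.
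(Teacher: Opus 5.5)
Your proposal is correct and follows essentially the same route as the paper, which also argues directly from $\Pi_{[a,c]}(t)=\min\{c,\max\{t,a\}\}$ using that $\min$ and $\max$ are nondecreasing and $1$-Lipschitz in each argument. Your extra details fill in steps the paper simply calls ``clear'': treating the formula on all of $\R^3$ so that intermediate triples in (i) need no ordering, and the explicit sup-norm bounds for $\max$ and $\min$ in (ii).
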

%%%%%%%%%%%%%%%%%%%%%%%%%%%%%%%%%%%%%%%%%%%%%%%
\begin{proof}
(i) is clear from the formula, both $\min$ and $\max$ being nondecreasing in
	each argument.  
(ii) follows from the fact that $\min$ and $\max$ of two
	$1$-Lipschitz maps are $1$-Lipschitz.  
(iii) is clear.
\end{proof}
%%%%%%%%%%%%%%%%%%%%%%%%%%%%%%%%%%%%%%%%%%%%%%%
\begin{corollary}\label{cor:Mmedian}
Let $\varphi_1(\cdot),\varphi_2(\cdot),\varphi_3(\cdot)\in\Mcl$ satisfy
	$\varphi_1(x)\le\varphi_3(x)$ for all $x$, and set
	$\mathsf m(x):=\med\{\varphi_1(x),\varphi_2(x),\varphi_3(x)\}$.  
Then $\mathsf m(\cdot)\in\Mcl$, and $\mathsf m(x)$ is nondecreasing in each of
	$\varphi_1(x)$, $\varphi_2(x)$ and $\varphi_3(x)$.  
The same holds with $\Mcl^{+}$ in place of $\Mcl$.
\end{corollary}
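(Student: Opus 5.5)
Since $\varphi_1(x)\le\varphi_3(x)$ pointwise, we can write $\mathsf m(x)=\Pi_{[\varphi_1(x),\varphi_3(x)]}(\varphi_2(x))=\min\{\varphi_3(x),\max\{\varphi_2(x),\varphi_1(x)\}\}$. Every claimed property then reduces to stability of $\Mcl$ under pointwise $\max$ and $\min$, plus Lemma~\ref{lem:medianproperties}. The monotonicity in each argument is exactly Lemma~\ref{lem:medianproperties}(i), applied pointwise at each fixed $x$: if one of the $\varphi_i$ is replaced by a pointwise larger function (keeping the ordering $\varphi_1\le\varphi_3$), the projection can only increase.

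For membership in $\Mcl$ I check the three defining properties in turn. \emph{Nonnegativity} is Lemma~\ref{lem:medianproperties}(iii) applied pointwise. \emph{Monotonicity}: for $x<y$ the triple $(\varphi_1(y),\varphi_2(y),\varphi_3(y))$ is dominated componentwise by the triple at $x$. Both triples satisfy the interval ordering, so Lemma~\ref{lem:medianproperties}(i) gives $\mathsf m(y)\le\mathsf m(x)$. \emph{Lipschitz bound}: Lemma~\ref{lem:medianproperties}(ii) gives
\begin{equation*}
|\mathsf m(x)-\mathsf m(y)|\le\max_{i}|\varphi_i(x)-\varphi_i(y)|\le|x-y|.
\end{equation*}
As a cross-check that avoids (ii), one can use the equivalent formulation in Definition~\ref{def:M}: the maps $x\mapsto x+\varphi_i(x)$ are nondecreasing, and
\begin{equation*}
x+\mathsf m(x)=\min\{x+\varphi_3(x),\max\{x+\varphi_2(x),x+\varphi_1(x)\}\}
\end{equation*}
is then nondecreasing, because adding $x$ commutes with $\min$ and $\max$.

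For $\Mcl^{+}$ (nondecreasing, $1$-Lipschitz, on $[1,\infty)$) the argument is the same with the direction of monotonicity reversed. For $x<y$ the triple at $x$ is now dominated by the triple at $y$, so $\mathsf m(x)\le\mathsf m(y)$, and the Lipschitz and nonnegativity steps carry over verbatim.

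There is no real obstacle here. The only point needing care is Lemma~\ref{lem:medianproperties}(ii), which is stated as a fact about $\min$ and $\max$ of $1$-Lipschitz maps. To make it airtight I would observe that $\Pi_{[a,c]}(t)=\min\{c,\max\{t,a\}\}$, viewed as a function of $(a,c,t)\in\R^3$, is a composition of $\max$ and $\min$. Each of these is $1$-Lipschitz with respect to the sup-norm, so the composition is too. This remains valid even when the ordering $a\le c$ is used only to identify the projection with the median.
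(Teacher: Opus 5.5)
Your proof is correct and follows essentially the same route as the paper: nonnegativity from Lemma~\ref{lem:medianproperties}(iii), monotonicity in $x$ from the order preservation in Lemma~\ref{lem:medianproperties}(i) applied to the triples at $x$ and $y$, the $1$-Lipschitz bound from Lemma~\ref{lem:medianproperties}(ii), and the $\Mcl^{+}$ case by reversing the direction of monotonicity. Your cross-check via $x\mapsto x+\mathsf m(x)$ and your justification of (ii) as a composition of $\max$ and $\min$, each $1$-Lipschitz in the sup-norm, are sound additions but do not change the argument.
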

%%%%%%%%%%%%%%%%%%%%%%%%%%%%%%%%%%%%%%%%%%%%%%%
\begin{proof}
Nonnegativity is Lemma~\ref{lem:medianproperties}(iii).  
For $x>y$, apply Lemma~\ref{lem:medianproperties}(ii) to
	\begin{align*}
	 	(a,c,t)=(\varphi_1(x),\varphi_3(x),\varphi_2(x)),\qquad
		 (a',c',t')=(\varphi_1(y),\varphi_3(y),\varphi_2(y)).
	\end{align*}
It gives
	$|\mathsf m(x)-\mathsf m(y)|\le\max_i|\varphi_i(x)-\varphi_i(y)|\le|x-y|$; and
	Lemma~\ref{lem:medianproperties}(i) gives $\mathsf m(x)\le\mathsf m(y)$ for $x>y$ when all
	$x\mapsto\varphi_i(x)$ are nonincreasing (respectively $\ge$ when all are nondecreasing).
\end{proof}
%%%%%%%%%%%%%%%%%%%%%%%%%%%%%%%%%%%%%%%%%%%%%%%
\section{Computational details for Remark~\ref{rem:crr}}\label{app:smoothfit}
%%%%%%%%%%%%%%%%%%%%%%%%%%%%%%%%%%%%%%%%%%%%%%%
The value functions in Remark~\ref{rem:crr} were computed with
	$1+r=e^{\rho T/N}$ and
	$\lambda=e^{\sigma\sqrt{T/N}}$ by backward induction on the lattice
	$\{x\lambda^{j}:|j|\le N\}$ generated by the evaluation point $x$, using
	\eqref{eq:dp2} with $V^{[m]}_0(x)=g(x)$; this is exact arithmetic up to
	floating-point error, no interpolation being involved, because the lattice
	generated by $x$ is closed under $y\mapsto\lambda^{\pm1}y$.  
The boundary $b$ was located by bisection on $x\mapsto g(x)-f^{[m]}_n(x)$, which is monotone by
	Theorem~\ref{thm:main}(i), to a tolerance of $10^{-12}$, and the one-sided
	derivatives were evaluated by one-sided difference quotients with increment
	$10^{-6}$; since the value function is piecewise affine
	(Lemma~\ref{lem:pwaffine}) and the nearest breakpoint is at distance of order
	$10^{-2}$ in all the reported cases, the quotients reproduce the exact
	one-sided slopes to the digits shown.  
The same routine, run with $\lambda=1.2$, $r=0.05$, $K=1$, reproduces the values in
	Example~\ref{ex:nonconvex} and Proposition~\ref{ex:nosmoothfit}. 
These computations are used only as numerical checks and illustrations; none of the
	proofs above relies on them.

%%%%%%%%%%%%%%%%%%%%%%%%%%%%%%%%%%%%%%%%%%
\section*{Declaration of Generative AI and AI-Assisted Technologies in the Manuscript Preparation Process}
In preparing this work, the author used ChatGPT (OpenAI) to assist with English-language editing and to check 		selected numerical calculations. 
The author developed all mathematical arguments and verified the final content.
%%%%%%%%%%%%%%%%%%%%%%%%%%%%%%%%%%%%%%%%%%

\end{document}